\documentclass[a4paper,11pt,reqno]{article}

\usepackage[utf8]{inputenc}
\usepackage[T1]{fontenc}
\usepackage{lmodern}
\usepackage[english]{babel}
\usepackage{microtype}

\usepackage{amsmath,amssymb,amsfonts,amsthm}
\usepackage{mathtools,accents}
\usepackage{mathrsfs}
\usepackage{aliascnt}
\usepackage{braket}
\usepackage{bm}

\usepackage[a4paper,margin=3cm]{geometry}
\usepackage[citecolor=blue,colorlinks]{hyperref}

\usepackage{enumerate}
\usepackage{xcolor}


\makeatletter
\g@addto@macro\@floatboxreset\centering
\makeatother


\makeatletter
\def\newaliasedtheorem#1[#2]#3{
  \newaliascnt{#1@alt}{#2}
  \newtheorem{#1}[#1@alt]{#3}
  \expandafter\newcommand\csname #1@altname\endcsname{#3}
}
\makeatother

\numberwithin{equation}{section}

\newtheoremstyle{slanted}{\topsep}{\topsep}{\slshape}{}{\bfseries}{.}{.5em}{}

\theoremstyle{plain}
\newtheorem{theorem}{Theorem}[section]
\newaliasedtheorem{proposition}[theorem]{Proposition}
\newaliasedtheorem{lemma}[theorem]{Lemma}
\newaliasedtheorem{corollary}[theorem]{Corollary}
\newaliasedtheorem{conjecture}[theorem]{Conjecture}
\newaliasedtheorem{counterexample}[theorem]{Counterexample}

\theoremstyle{definition}
\newaliasedtheorem{definition}[theorem]{Definition}
\newaliasedtheorem{question}[theorem]{Question}
\newaliasedtheorem{example}[theorem]{Example}

\theoremstyle{remark}
\newaliasedtheorem{remark}[theorem]{Remark}



\let\altphi\phi
\let\phi\varphi
\let\varphi\altphi
\let\altphi\undefined




\newcommand{\di}{\mathop{}\!\mathrm{d}}

\DeclareMathOperator{\supp}{supp}

\newcommand{\Ch}{{\sf Ch}}




\newcommand{\dist}{\mathsf{d}}

\newcommand{\meas}{\mathfrak{m}}


\DeclareMathOperator{\RCD}{RCD}

\newfont{\tmpf}{cmsy10 scaled 2500}



\newcommand{\intav}{{\mathop{\int\kern-10pt\rotatebox{0}{\textbf{--}}}}}

\renewcommand{\ }{\text{ }}
\def\<{\langle}
\def\>{\rangle}

\begin{document}
\title{Spectral distances on RCD spaces}
\author{
Shouhei Honda
\thanks{Mathematical Institute, Tohoku University, \url{shouhei.honda.e4@tohoku.ac.jp} Current address: Graduate School of Mathematical Sciences, The University of Tokyo, \url{shouhei.@ms.u-tokyo.ac.jp} Keywords: Ricci curvature, Laplacian, metric measure space, eigenfunction, heat kernel, and Gromov-Hausdorff convergence. MSC: 53C20, 53C21 and 53C23.}} 
\maketitle
\begin{abstract}
We provide relationships between the spectral convergences in B\'erard-Besson-Gallot sense, in Kasue-Kumura sense and the measured Gromov-Hausdorff convergence, for compact finite dimensional $\RCD$ spaces. As an independent interest, a canonical (spectral) approximation map between such spaces constructed by given spectral data is obtained.
\end{abstract}

\section{Introduction}
The paper establishes \textit{reconstructions} of compact spaces with Ricci curvature bounded from below via embeddings by eigenfunctions/heat kernels. Moreover related convergence notions are discussed. The precise descriptions are explained below.
\subsection{Spectral convergence}
Roughly speaking, \textit{spectral convergence} means that spectral information, including eigenvalues/eigenfunctions, behave continuously with respect to  a convergence of spaces in some topology. Since we have a couple of choices on the topologies, the purpose of this subsection is to clarify them.
\subsubsection{B\'erard-Besson-Gallot sense}
Let $(M^n, g)$ be an $n$-dimensional closed Riemannian manifold. Then the spectrum of the operator $-\Delta^g=-\mathrm{tr}(\mathrm{Hess})$, counted with multiplicities, can be written as follows;
\begin{equation}
0=\lambda_0^g<\lambda^g_1\le \lambda^g_2 \le \cdots \to \infty.
\end{equation}
We say that a sequence of smooth functions on $M^n$, $\mathbf{a}=(\phi_0^{\mathbf{a}}, \phi_1^{\mathbf{a}},\ldots)$, is a \textit{spectral datum} of $(M^n ,g)$ if $\Delta^g\phi_i^{\mathbf{a}}+\lambda_i^{\mathbf{a}}\phi_i^{\mathbf{a}}=0$ and $(\phi_i^{\mathbf{a}}, \phi_j^{\mathbf{a}})_{L^2}=\delta_{ij}$ are satisfied (namely $\{\phi_i^{\mathbf{a}}\}_i$ gives an orthonormal basis of $L^2(M^n, \mathrm{vol}^g)$), where $\mathrm{vol}^g$ denotes the Riemannian volume measure on $M^n$ defined by the Riemannian metric  $g$.

B\'erard-Besson-Gallot discussed in \cite{BerardBessonGallot} a family of smooth maps $I_t^{\mathbf{a}}:M^n \to \ell^2$ for all spectral datum $\mathbf{a}$ and $t \in (0, \infty)$ defined by
\begin{equation}
I^{\mathbf{a}}_t(x):=\left( \sqrt{\mathrm{vol}^gM^n}e^{-\lambda_i^gt/2}\phi_i^{\mathbf{a}}(x)\right)_{i \ge 1},
\end{equation}
which gives indeed an embedding of $M^n$ into $\ell^2$. More precisely they defined a pseudo distance by using these embeddings, denoted by $\dist_{\mathrm{Spec}}^t((M^n, g), (N^k, h))$, between closed Riemannian manifolds $(M^n, g)$ and $(N^k, h)$ as follows; 
\begin{align}
&\dist_{\mathrm{Spec}}^t\left((M^n, g), (N^k, h)\right) \nonumber \\
&:=\max \left\{ \sup_{\mathbf{a}}\inf_{\mathbf{b}}\dist_{\ell^2}^{\mathrm{H}}\left(I^{\mathbf{a}}_t(M^n), I^{\mathbf{b}}_t(N^k) \right), \quad \sup_{\mathbf{b}}\inf_{\mathbf{a}}\dist_{\ell^2}^{\mathrm{H}}\left(I^{\mathbf{a}}_t(M^n), I^{\mathbf{b}}_t(N^k) \right)\right\},
\end{align}
where $\dist_{\ell^2}^{\mathrm{H}}$ denotes the Hausdorff distance in $\ell^2$. A main result in \cite{BerardBessonGallot} states that $\dist_{\mathrm{Spec}}^t$ is actually a distance in the smooth framework, namely it is trivially symmetric, it satisfies the triangle inequality and it is nondegenerate. The nontrivial part is the nondegeneracy, namely if $\dist_{\mathrm{Spec}}^t((M^n,g), (N^k, h))=0$ holds, then $(M^n, g)$ is isometric to $(N^k, h)$.
Let us recall the proof of this nondegeneracy as follows.

Firstly, by compactness, there exist spectral data $\mathbf{a}$ and $\mathbf{b}$ such that $I^{\mathbf{a}}_t(M^n)=I^{\mathbf{b}}_t(N^k)$, which allows us to find a homeomorphism $f:M^n \to N^k$ preserving the embeddings $I^{\mathbf{a}}_t, I^{\mathbf{b}}_t$ (in particular $n=k$). Secondly 
 we observe that any point has a chart around the point by the restriction of an eigenmap. Combining this with beautiful arguments proves that $f$ gives a diffeomorphism and that $f$ preserves the Laplacians, namely
 \begin{equation}\label{p}
 \Delta^g (\psi \circ f)= (\Delta^h \psi)\circ f,\quad \forall \psi \in C^{\infty}(N^k).
 \end{equation}
Since  (\ref{p}) implies  by looking at the principal symbols that $f$ also preserves the Riemannian metrics, we conclude that $f$ is an isometry.

Roughly speaking, this nondegeneracy says that any spectral datum reconstructs the metric structure of a closed Riemannian manifold. 

 It is emphasized that a precompactness result with respect to $\dist^t_{\mathrm{Spec}}$ is given in the same paper and that a Lipschitz (more strongly, a smooth) convergence is \textit{not} enough to get the convergence with respect to $\dist^t_{\mathrm{Spec}}$, even with flat curvature. In fact under a smooth convergence;
\begin{equation}\label{flat}
\left( \mathbb{S}^1(1)\times \mathbb{S}^1(1+\epsilon), g_{\mathbb{S}^1(1)}\oplus g_{\mathbb{S}^1(1+\epsilon)}\right) \to \left( \mathbb{S}^1(1)\times \mathbb{S}^1(1), g_{\mathbb{S}^1(1)}\oplus g_{\mathbb{S}^1(1)}\right) 
\end{equation}
as $\epsilon \to 0$ with $\epsilon \neq 0$, the corresponding spectral distances $\dist_{\mathrm{Spec}}^t$ between the LHS and the RHS in (\ref{flat}) do not converge to $0$ for any $t \in (0, \infty)$, where $g_{\mathbb{S}^1(r)}$ denotes the canonical Riemannian metric on $\mathbb{S}^1(r):=\{x \in \mathbb{R}^2; |x|_{\mathbb{R}^2}=r\}$. See \cite[Example 28]{BerardBessonGallot}. This tells us that $\dist_{\mathrm{Spec}}^t$-convergence might be completely different from the previously known convergence notions in metric geometry. It is worth mentioning that \cite[Proposition 16]{BerardBessonGallot} provides us collapsing examples of $\dist_{\mathrm{Spec}}^t$-convergence.

Here let us state the following natural questions.
\begin{enumerate}
\item[(Q$1$)] Is it possible to generalize the spectral distance $\dist_{\mathrm{Spec}}^t$ above to a nonsmooth setting?
\item[(Q$2$)] Can we give a necessary and sufficient condition for the validity of $\dist_{\mathrm{Spec}}^t$-convergence in terms of the measured Gromov-Hausdorff (mGH) convergence?
\end{enumerate}
As mentioned above, only considering mGH convergence is not enough in order to give a positive answer to (Q$2$) because the Lipschitz convergence is strictly stronger than the mGH-convergence.
To give a positive answer to (Q$1$), in the paper, we adopt \textit{metric measured spaces with Ricci curvature bounded from below}, so called $\RCD$ \textit{spaces}, as a nonsmooth context. 
The first main results, Theorems \ref{thmnond} and \ref{mainthm2}, giving positive answers to both (Q$1$) and (Q$2$), will be explained later. 

\subsubsection{Kasue-Kumura sense}
In \cite{KK, KK2}, Kasue-Kumura defined a distance between weighted closed Riemannian manifolds by using their heat kernels. The idea is close to that of \cite{BerardBessonGallot}, although different. The precise definition is as follows; for all weighted closed Riemannian manifolds $(M^n, g, e^{-\psi}\di \mathrm{vol}^g), (N^k, h, e^{-\phi}\di \mathrm{vol}^h)$, denoting by $p, q$ their heat kernels, respectively, the \textit{spectral distance} in the sense of Kasue-Kumura between them is defined by the infimum of $\epsilon \in (0, \infty)$ satisfying that there exist maps $f_1:M^n \to N^k, f_2:N^k \to M^n$ such that 
\begin{equation}
e^{-(t+1/t)}\left|q(f_1(x), f_1(\tilde x), t)-p(x, \tilde x, t)\right|<\epsilon,\quad \forall x,\,\,\forall \tilde x \in M^n,\,\,\,\forall t \in (0, \infty)
\end{equation}
and 
\begin{equation}
e^{-(t+1/t)}\left|p(f_2(y), f_2(\tilde y), t)-q(y, \tilde y, t)\right|<\epsilon,\quad \forall y,\,\,\forall\tilde y \in N^k,\,\,\,\forall t \in (0, \infty)
\end{equation}
are satisfied. Then they also established a precompactness result with respect to this distance.
Here let us give the following natural question as in the case of B\'erard-Besson-Gallot.
\begin{enumerate}
\item[(Q$3$)] After generalizing the spectral distance in the sense of Kasue-Kumura, denoted by $\tilde \dist_{\mathrm{Spec}}$ in the paper, to a nonsmooth setting, can we give a necessary and sufficient condition for the validity of $\tilde \dist_{\mathrm{Spec}}$-convergence in terms of the mGH convergence?
\end{enumerate}
The second main result, Theorem \ref{kk}, provides a positive answer to (Q$3$) as explained in the next subsection.
\subsection{Main results}
We say that a metric measured space $\mathbb{X}=(X, \dist_X, \meas_X)$ is said to be an $\RCD(K, N)$ \textit{space} for some $K \in \mathbb{R}$ and some $N \in [1, \infty)$, or $\RCD$ space for short, if its Ricci curvature is bounded below by $K$ and the dimension is bounded above by $N$ in a synthetic sense, and the $H^{1,2}$-Sobolev space is a Hilbert space. 
See subsection \ref{rcddef} for a brief introduction on the definition. In the sequel all spaces we will discuss are assumed to be not single points.

Let us fix a compact $\RCD(K, N)$ space $\mathbb{X}=(X, \dist_X, \meas_X)$. Then the spectrum of the operator $-\Delta_X$ on $X$ can be also written by
\begin{equation}\label{eq:eignval}
0=\lambda_0(\mathbb{X})<\lambda_1(\mathbb{X}) \le \lambda_2(\mathbb{X}) \le \cdots \to \infty,
\end{equation}
counted with (finite) multiplicities, because the canonical inclusion of the $H^{1,2}$-Sobolev space into $L^2(X, \meas_X)$ is a compact operator by the compactness of $X$ (see subsection \ref{spectraltheory} or \cite[Appendix]{Honda3}).
Thus by similar ways as in the case of closed Riemannian manifolds, we can also define a spectral datum $\mathbf{a}$ of $\mathbb{X}$, a family of topological embeddings $I^{\mathbf{a}}_t:X \to \ell^2$ by
\begin{equation}
I^{\mathbf{a}}_{t}(x):=\left(\sqrt{\meas_X (X)}e^{-\lambda_i(\mathbb{X})t/2}\phi_i^{\mathbf{a}}(x)\right)_{i \ge 1}
\end{equation}
and the spectral pseudo distance $\dist_{\mathrm{Spec}}^t(\mathbb{X}, \mathbb{Y})$ between compact $\RCD(K, N)$ spaces  $\mathbb{X}, \mathbb{Y}$.

It is trivial that $\dist_{\mathrm{Spec}}^t$ is still symmetric and that the triangle inequality for $\dist_{\mathrm{Spec}}^t$ also holds.
However the nondegeneracy of $\dist_{\mathrm{Spec}}^t$  for compact $\RCD(K, N)$ spaces is \textit{not} satisfied as the following observation shows;
for all $c, t \in (0, \infty)$ and spectral datum $\mathbf{a}$ of $\mathbb{X}$, letting $\mathbb{Y}:=(X, \dist_X, c^{-2} \meas_X)$ and $\mathbf{b}=(c\phi_0^{\mathbf{a}}, c\phi_1^{\mathbf{a}}, \ldots )$, we have 
\begin{equation}\label{eq:amb}
I_{t}^{\mathbf{a}}(x)=I_{t}^{\mathbf{b}}(x), \quad \forall x \in X=Y.
\end{equation}
In particular although $\dist_{\mathrm{Spec}}^t(\mathbb{X}, \mathbb{Y})=0$ holds, but $\mathbb{X}$ is \textit{not} isometric to $\mathbb{Y}$ as metric measured spaces whenever $c \neq 1$. Here let us recall the meaning of an isometry between metric measured spaces; a map $f:X \to Y$ between two metric measured spaces $\mathbb{X}, \mathbb{Y}$ is said to be an \textit{isometry} (then we call that $\mathbb{X}$ is \textit{isometric} to $\mathbb{Y}$) as metric measured spaces if $f$ is bijective, it preserves the distances, and its push-forward measure $f_{\sharp}\meas_X$ by $f$ coincides with $\meas_Y$ (see the beginning of subsection \ref{rcddef}).

We are now in a position to introduce a main result about the nondegeneracy. Roughly speaking the above is only the case when $\dist^t_{\mathrm{Spec}}=0$.
\begin{theorem}\label{thmnond}
Let $\mathbb{Y}=(Y, \dist_Y, \meas_Y)$ be a compact $\RCD(K, N)$ space. If
\begin{equation}\label{subset}
I^{\mathbf{a}}_t(X) \subset I^{\mathbf{b}}_t(Y)
\end{equation}
holds for some $t \in (0, \infty)$ and some spectral data $\mathbf{a}, \mathbf{b}$ of $\mathbb{X}, \mathbb{Y}$, respectively, then $\mathbb{X}$ is isometric to $\mathbb{Y}$ as metric measured spaces, up to a multiplication of $\meas_Y$ by a positive constant.
\end{theorem}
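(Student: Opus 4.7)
The plan is to construct a continuous map $f : X \to Y$ from the inclusion, show that $f$ pushes $\meas_X$ to a constant multiple of $\meas_Y$ (hence is surjective and the eigenvalues on both sides coincide), and then use Varadhan's short-time heat-kernel asymptotics to upgrade $f$ to an isometry.

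First, since $I^{\mathbf{b}}_t$ is a topological embedding from the compact space $Y$, the inclusion (\ref{subset}) lets me define the continuous injection $f := (I^{\mathbf{b}}_t)^{-1}\circ I^{\mathbf{a}}_t : X \to Y$. Comparing coordinates in $I^{\mathbf{a}}_t(x) = I^{\mathbf{b}}_t(f(x))$ produces, for every $i \geq 1$ and $x \in X$, the pointwise identity
\[
\phi_i^{\mathbf{b}}(f(x)) \;=\; c_i\, \phi_i^{\mathbf{a}}(x),\qquad c_i := \sqrt{\tfrac{\meas_X(X)}{\meas_Y(Y)}}\, e^{(\lambda_i(\mathbb{Y})-\lambda_i(\mathbb{X}))t/2} .
\]

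Next I would show $f_{*}\meas_X = \lambda\, \meas_Y$ with $\lambda := \meas_X(X)/\meas_Y(Y)$. Fix $s>0$ and $g\in C(Y)$; Gaussian heat-kernel bounds on compact $\RCD(K,N)$ spaces make the spectral expansion of $P^{Y}_{s} g$ converge uniformly on $Y$, and composing with the continuous $f$ together with the identity above yields the uniformly convergent expansion $(P^{Y}_{s} g)\circ f = \sum_i e^{-\lambda_i(\mathbb{Y})s}\langle g,\phi_i^{\mathbf{b}}\rangle c_i \phi_i^{\mathbf{a}}$ on $X$. Integrating termwise, every $\phi_i^{\mathbf{a}}$ with $i\geq 1$ integrates to zero, so only the $i=0$ contribution survives and produces
\[
\int_X (P^{Y}_{s} g)\circ f \,\di \meas_X \;=\; \lambda \int_Y g\, \di \meas_Y .
\]
Sending $s\downarrow 0$ and invoking the Feller property of the $\RCD$ heat semigroup (so $P^Y_s g \to g$ uniformly on $Y$), the left-hand side tends to $\int_Y g\, \di(f_{*}\meas_X)$, hence $f_{*}\meas_X = \lambda\,\meas_Y$. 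In particular $f(X)$ is closed of full $\meas_Y$-measure; since $\meas_Y$ has full support on $Y$, this forces $f(X)=Y$ and $f$ is a homeomorphism.

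Finally, equating $L^2(X)$-norms of both sides of the identity $\phi_i^{\mathbf{b}}\circ f = c_i \phi_i^{\mathbf{a}}$ via $f_{*}\meas_X = \lambda\,\meas_Y$ gives $c_i^2 = \lambda$ for every $i$, which forces $\lambda_i(\mathbb{X}) = \lambda_i(\mathbb{Y})$ and $\phi_i^{\mathbf{b}}\circ f = \sqrt{\lambda}\, \phi_i^{\mathbf{a}}$. Summing the spectral expansions of the heat kernels then yields $p_Y(f(x),f(y),s) = \lambda\, p_X(x,y,s)$ for all $s>0$ and $x,y\in X$, and Varadhan's formula $-4s\log p(x,y,s)\to \dist(x,y)^2$, available in the compact $\RCD(K,N)$ setting, forces $\dist_Y(f(x),f(y)) = \dist_X(x,y)$. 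Hence $f$ is an isometric bijection with $f_{*}\meas_X = \lambda\, \meas_Y$, exactly the stated conclusion. The main obstacle will be the measure identity $f_{*}\meas_X = \lambda\,\meas_Y$: it rests on uniform convergence of the heat-semigroup spectral expansion and on Feller regularity of the heat semigroup on compact $\RCD(K,N)$ spaces, both of which require non-trivial (but available) Gaussian heat-kernel estimates for $p_Y$; once this measure-matching is established, surjectivity of $f$, equality of spectra, and the final isometry statement all fall out cleanly.
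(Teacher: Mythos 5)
Your proposal is correct and follows essentially the same route as the paper: construct $f=(I_t^{\mathbf{b}})^{-1}\circ I_t^{\mathbf{a}}$, prove $f_{\sharp}\meas_X=\lambda\,\meas_Y$ by a heat-flow regularization that kills the nonconstant modes, deduce surjectivity and equality of the spectra by integrating the squared coordinate identity, and conclude via the heat-kernel expansion together with the Varadhan asymptotics (\ref{varad}). The only immaterial difference is that you obtain the measure identity by testing $f_{\sharp}\meas_X$ against $h_s g$ for $g\in C(Y)$ and integrating the uniformly convergent spectral expansion termwise, whereas the paper applies the dual heat flow $\tilde h_s$ to $f_{\sharp}\meas_X$ and invokes Lemma \ref{lem:orth}; these are dual formulations of the same argument.
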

We will also give a quantitative result of the above, see Theorem \ref{from} with Proposition \ref{prop:rela}.
Since it is easy to see that  if $\dist^t_{\mathrm{Spec}}(\mathbb{X}, \mathbb{Y})=0$, then $I^{\mathbf{a}}_t(X)=I^{\mathbf{b}}_t(Y)$ for some $\mathbf{a}, \mathbf{b}$, we obtain the following as a corollary of the theorem above. 
\begin{corollary}[Nondegeneracy]\label{cormain}
If a compact $\RCD(K, N)$ space $\mathbb{Y}=(Y, \dist_Y, \meas_Y)$ satisfies $\dist^t_{\mathrm{Spec}}(\mathbb{X}, \mathbb{Y})=0$ and $\meas_X(X)=\meas_Y(Y)$, then $\mathbb{X}$ is isometric to $\mathbb{Y}$ as metric measured spaces.
\end{corollary}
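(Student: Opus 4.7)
The plan is to reduce Corollary \ref{cormain} to Theorem \ref{thmnond}. There are two things to do: first, upgrade the vanishing $\dist^t_{\mathrm{Spec}}(\mathbb{X},\mathbb{Y})=0$ to an actual equality of embedded images $I^{\mathbf{a}}_t(X)=I^{\mathbf{b}}_t(Y)$ for some spectral data $\mathbf{a}, \mathbf{b}$; and second, turn the ``isomorphism up to mass rescaling'' furnished by Theorem \ref{thmnond} into a genuine isomorphism using the constraint $\meas_X(X)=\meas_Y(Y)$.

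For the first step, fix any spectral data $\mathbf{a}$ of $\mathbb{X}$. From $\dist^t_{\mathrm{Spec}}(\mathbb{X},\mathbb{Y})=0$ together with the $\sup_{\mathbf{a}}\inf_{\mathbf{b}}$ formulation of the spectral distance, I obtain a sequence $\{\mathbf{b}_j\}$ of spectral data of $\mathbb{Y}$ with $\dist^{\mathrm{H}}_{\ell^2}(I^{\mathbf{a}}_t(X),I^{\mathbf{b}_j}_t(Y))\to 0$. Since the eigenvalues in (\ref{eq:eignval}) have finite multiplicities, the set of spectral data of $\mathbb{Y}$ is, modulo phases inside each eigenspace of dimension $d_i$, the countable product $\prod_{i}O(d_i)$, compact by Tychonoff, so on a diagonal subsequence $\mathbf{b}_j$ converges coordinate-by-coordinate to some spectral data $\mathbf{b}$. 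Combining the exponential factor $e^{-\lambda_i(\mathbb{Y})t/2}$ with Weyl-type asymptotics $\lambda_i(\mathbb{Y})\gtrsim i^{2/N}$ and the uniform $L^\infty$ bound on $L^2$-normalized eigenfunctions available on compact $\RCD(K,N)$ spaces yields a uniform-in-$j$ tail estimate for the $\ell^2$-norm of the coordinates of $I^{\mathbf{b}_j}_t$ beyond index $N$. This promotes coordinatewise convergence to the uniform convergence $I^{\mathbf{b}_j}_t\to I^{\mathbf{b}}_t$ of maps $Y\to\ell^2$, and passing to the limit in the Hausdorff convergence gives $I^{\mathbf{a}}_t(X)=I^{\mathbf{b}}_t(Y)$.

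This equality in particular satisfies the inclusion hypothesis (\ref{subset}), so Theorem \ref{thmnond} produces $c>0$ and an isomorphism $f$ of metric measure spaces between $\mathbb{X}$ and $(Y,\dist_Y,c\,\meas_Y)$. Evaluating $f_\ast\meas_X=c\,\meas_Y$ on the whole space gives $\meas_X(X)=c\,\meas_Y(Y)$, and the hypothesis $\meas_X(X)=\meas_Y(Y)$ forces $c=1$. Hence $f$ is an isomorphism of $\mathbb{X}$ with $\mathbb{Y}$.

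The main obstacle is the selection step producing $\mathbf{b}$: Hausdorff convergence of images in $\ell^2$ does not by itself control the parametrizing eigenfunctions, because one may reshuffle orthonormal bases inside each eigenspace differently for each $j$. Compactness of $\prod_{i}O(d_i)$ handles the reshuffling on a subsequence, but to transfer the equality of images to the limit one still needs the uniform $\ell^2$-tail estimate, which relies on the heat-kernel type bounds specific to the $\RCD(K,N)$ framework and is really where the curvature-dimension hypothesis enters.
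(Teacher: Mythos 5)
Your proposal is correct and follows essentially the same route as the paper: reduce to the attainment of $I^{\mathbf a}_t(X)=I^{\mathbf b}_t(Y)$ for some spectral datas via compactness of the set of spectral datas of a fixed space, then invoke Theorem \ref{thmnond} and use $\meas_X(X)=\meas_Y(Y)$ to force the scaling constant $c=\meas_X(X)/\meas_Y(Y)$ to equal $1$. The paper packages the selection step as Theorem \ref{prop:comp} (applied to the constant sequence) together with Theorem \ref{thm:convi}, which is exactly the Tychonoff-plus-tail-estimate argument you carry out by hand.
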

Let us emphasize here that the results above seem to be new even in the weighted smooth framework. Moreover they show that the framework of  compact $\RCD(K, N)$ spaces gives a positive answer to (Q$1$).

In order to introduce another main result, Theorem \ref{mainthm2},  we denote the eigenvalues of the operator $-\Delta_X$, \textit{without} counting multiplicities, by
\begin{equation}
0=\mu_0(\mathbb{X})<\mu_1(\mathbb{X}) < \mu_2(\mathbb{X}) < \cdots \to \infty
\end{equation}
and then we define $\nu_i(\mathbb{X})$ by the multiplicity of $\mu_i(\mathbb{X})$.
Recall that we say that a sequence of compact $\RCD(K, N)$ spaces $\mathbb{X}_i$ \textit{measured Gromov-Hausdorff} (mGH) \textit{converges} to a compact $\RCD(K, N)$ space $\mathbb{X}$ if there exists a sequence of Borel measurable maps $f_i:X_i \to X$ such that the following three conditions are satisfied;
\begin{enumerate}
\item we have as $i \to \infty$
\begin{equation}
\sup_{x_i, y_i \in X_i}\left| \dist_{X_i}\left(x_i, y_i\right)-\dist_X\left(f_i(x_i), f_i(y_i)\right)\right| \to 0;
\end{equation}
\item we have $X=B_{\epsilon_i}(f_i(X_i))$ for some $\epsilon_i \to 0^+$, where $B_{\epsilon}(A)$ denotes the $\epsilon$-open neighborhood of a subset $A$ of $X$;
\item the push-forward measure $(f_i)_{\sharp}\meas_{X_i}$ of $\meas_{X_i}$ by $f_i$ weakly converge to $\meas_X$ in $X$.
\end{enumerate}
See also Definition \ref{defmgh}.

Let us provide the second main result.
\begin{theorem}\label{mainthm2}
Let $\mathbb{X}_i, \mathbb{X}$ be compact $\RCD(K, N)$ spaces $(i=1,2,\ldots)$. Assume that the diameters $\mathrm{diam}(X_i, \dist_i)$ are uniformly bounded. Then the following four conditions are equivalent.
\begin{enumerate}
\item We have
\begin{equation}\label{101}
\dist_{\mathrm{Spec}}^t(\mathbb{X}_i, \mathbb{X}) \to 0 \quad \text{and} \quad \meas_{X_i}(X_i) \to \meas_X(X)
\end{equation} 
for some $t \in (0, \infty)$.
\item (1) is satisfied for any $t \in (0, \infty)$.
\item We have
\begin{equation}\label{22377}
\mathbb{X}_i \stackrel{\mathrm{mGH}}{\to} \mathbb{X},\quad \mu_j(\mathbb{X}_i) \to \mu_j(\mathbb{X})\,\,\,\text{and}\,\,\,\nu_j(\mathbb{X}_i) \to \nu_j(\mathbb{X}), \quad \forall j,
\end{equation}
where the notaion, $\stackrel{\mathrm{mGH}}{\to}$, denotes the mGH convergence. 
\item We have
\begin{equation}\label{asioraoiha}
\mathbb{X}_i \stackrel{\mathrm{mGH}}{\to} \mathbb{X},\quad \mu_j(\mathbb{X}_i) \to \mu_j(\mathbb{X}), \quad \forall j.
\end{equation}
\end{enumerate}
\end{theorem}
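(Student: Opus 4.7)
My plan is to close the loop along $(3) \Rightarrow (2) \Rightarrow (1) \Rightarrow (4) \Rightarrow (3)$. The implications $(2) \Rightarrow (1)$ and $(3) \Rightarrow (4)$ are immediate, so three substantive steps remain. For $(4) \Rightarrow (3)$: under the uniform diameter bound, Mosco convergence of the Cheeger energies along mGH yields full spectral convergence $\lambda_j(\mathbb{X}_i) \to \lambda_j(\mathbb{X})$ for every $j$ counted with multiplicity. Fixing $j$ and a window $(\mu_j(\mathbb{X})-\eps, \mu_j(\mathbb{X})+\eps)$ disjoint from the other distinct eigenvalues, the hypothesis $\mu_j(\mathbb{X}_i) \to \mu_j(\mathbb{X})$ ensures that this window captures exactly one distinct eigenvalue of $\mathbb{X}_i$ for $i$ large, and counting with multiplicity via full spectral convergence forces $\nu_j(\mathbb{X}_i) = \nu_j(\mathbb{X})$ eventually.

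For $(3) \Rightarrow (2)$: fix $t>0$. Strong $L^2$-convergence (under mGH identifications) of spectral projectors onto any isolated eigenvalue block, a classical consequence of Mosco convergence, combines with the multiplicity stability in (3) to allow choosing orthonormal bases eigenspace by eigenspace. Given a spectral data $\mathbf{a}$ on $\mathbb{X}$, this produces a spectral data $\mathbf{a}_i$ on $\mathbb{X}_i$ with $\phi_j^{\mathbf{a}_i} \to \phi_j^{\mathbf{a}}$ strongly in $L^2$ for each $j$. Uniform $L^\infty$-bounds on $L^2$-normalized eigenfunctions on compact $\RCD(K,N)$ spaces (via heat-kernel regularity) together with the Weyl-type lower bound $\lambda_j \gtrsim j^{2/N}$ give a tail estimate for $\sum_{j \ge J} e^{-\lambda_j(\mathbb{X}_i) t}\phi_j^{\mathbf{a}_i}(x)^2$ that is uniform in $i$ and $x$. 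Combining uniform convergence of the first $J$ coordinates of the embedding with this tail bound yields $\dist_{\ell^2}^{\mathrm{H}}(I^{\mathbf{a}_i}_t(X_i), I^{\mathbf{a}}_t(X)) \to 0$; the outer $\sup_{\mathbf{a}}$ and $\sup_{\mathbf{b}}$ in the definition of $\dist^t_{\mathrm{Spec}}$ are then handled by a diagonal/compactness argument over the (compact) product of orthogonal groups on each eigenspace, and the mass convergence in (1) is part of mGH.

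For $(1) \Rightarrow (4)$: invoke mGH-precompactness of $\RCD(K,N)$ spaces with uniformly bounded diameter. Any subsequence admits a further subsequence $\mathbb{X}_{i_k} \stackrel{\mathrm{mGH}}{\to} \mathbb{Y}$ with $\meas_Y(Y) = \lim_k \meas_{X_{i_k}}(X_{i_k}) = \meas_X(X)$ by (1); the already-proved $(3) \Rightarrow (2)$, applied to this convergent subsequence, gives $\dist^t_{\mathrm{Spec}}(\mathbb{X}_{i_k}, \mathbb{Y}) \to 0$. The triangle inequality combined with the hypothesis then forces $\dist^t_{\mathrm{Spec}}(\mathbb{X}, \mathbb{Y}) = 0$, and Corollary~\ref{cormain} gives $\mathbb{Y} \cong \mathbb{X}$; since every subsequence has a further subsequence with the same limit, the full sequence converges in mGH, and eigenvalue convergence follows. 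The hard part of the whole scheme is $(3) \Rightarrow (2)$: while each ingredient (Mosco convergence, $L^\infty$ bounds, Weyl asymptotics) is available on $\RCD(K,N)$ spaces, one must align orthonormal bases inside each degenerate eigenspace coherently along the sequence and simultaneously control the infinite $\ell^2$-tail of the embedding uniformly in $i$.
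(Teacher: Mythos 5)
Your cycle $(3)\Rightarrow(2)\Rightarrow(1)\Rightarrow(4)\Rightarrow(3)$ differs from the paper's route ($(1)\Rightarrow(3)\Rightarrow(2)\Rightarrow(1)$ together with $(3)\Leftrightarrow(4)$), but your steps $(4)\Rightarrow(3)$ and $(3)\Rightarrow(2)$ are essentially the paper's own arguments (Corollary \ref{cordef}, and the block-rotation construction of convergent spectral data combined with the tail estimates behind Theorem \ref{thm:convi}). The gap is in $(1)\Rightarrow(4)$. First, you apply the already-proved $(3)\Rightarrow(2)$ to the mGH-convergent subsequence $\mathbb{X}_{i_k}\to\mathbb{Y}$, but hypothesis $(3)$ for that subsequence is precisely what is not available: mGH convergence gives $\lambda_j(\mathbb{X}_{i_k})\to\lambda_j(\mathbb{Y})$ counted with multiplicity, but not $\mu_j(\mathbb{X}_{i_k})\to\mu_j(\mathbb{Y})$ or $\nu_j(\mathbb{X}_{i_k})\to\nu_j(\mathbb{Y})$, and indeed $\dist^t_{\mathrm{Spec}}(\mathbb{X}_{i_k},\mathbb{Y})\to 0$ can fail along a merely mGH-convergent sequence --- the torus example (\ref{flat}) is exactly such a failure. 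This part is repairable: mGH convergence does give the one-sided statement $\underline{\dist}_{\mathrm{Spec}}^t(\mathbb{X}_{i_k},\mathbb{Y})\to 0$ after a subsequence (Theorem \ref{eq:mghspec}), which with the triangle inequality for $\stackrel{\to}{\dist^t}_{\mathrm{Spec}}$ and Theorem \ref{thmnond} identifies $\mathbb{Y}$ with $\mathbb{X}$; that is the paper's Theorem \ref{thmoneimp}.

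The more serious issue is your closing sentence ``eigenvalue convergence follows.'' Once $\mathbb{X}_i\stackrel{\mathrm{mGH}}{\to}\mathbb{X}$ is established, the convergence $\mu_j(\mathbb{X}_i)\to\mu_j(\mathbb{X})$ does \emph{not} follow: again by (\ref{flat}) one can have mGH convergence with $\mu_2(\mathbb{X}_i)\to\mu_1(\mathbb{X})\neq\mu_2(\mathbb{X})$, i.e.\ distinct eigenvalues of $\mathbb{X}_i$ merging in the limit. Extracting $\mu_j$-convergence from hypothesis $(1)$ is the real content of the implication, and it requires exploiting the supremum over spectral data of the \emph{limit} space hidden in $\dist_{\mathrm{Spec}}^t\to 0$: the paper supposes $\nu_1(\mathbb{X}_i)=k<\nu_1(\mathbb{X})$, rotates $\phi_k^{\mathbf{a}}$ and $\phi_{k+1}^{\mathbf{a}}$ into a new spectral data $\mathbf{b}$ of $\mathbb{X}$, and notes that any spectral data $\mathbf{b}_i$ of $\mathbb{X}_i$ whose image approximates $I^{\mathbf{b}}_t(X)$ must be an $O(k)$ block rotation of $\mathbf{a}_i$, forcing $\phi_{k+1}^{\mathbf{a}}$ into the span of $\phi_1^{\mathbf{a}},\dots,\phi_k^{\mathbf{a}}$ --- a contradiction. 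Nothing in your outline plays this role, so as written the proposal does not prove $(1)\Rightarrow(4)$. A smaller omitted point: before invoking precompactness in $\mathcal{M}(K,N,d,v)$ and the uniform eigenfunction bounds one must rule out collapse to a single point, which the paper handles separately via Corollary \ref{aaassrr}.
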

This theorem gives a positive answer to (Q$2$). Namely, the spectral convergence with respect to $\dist_{\mathrm{Spec}}^t$ holds with the convergence of total measures if and only if the mGH convergence holds with the convergence of eigenvalues, without counting multiplicities. This provides us a well-understanding of $\dist_{\mathrm{Spec}}^t$-convergence.  In fact, the example (\ref{flat}) is a mGH convergent sequence with the Riemannian volume measures, but $\nu_1(\mathbb{X}_i)$ does not converge to $\nu_1(\mathbb{X})$, and $\mu_2(\mathbb{X}_i)$ converge to $\mu_1(\mathbb{X})$ ($\neq \mu_2(\mathbb{X})$ by definition). Note that Theorem \ref{mainthm2} can be improved in the case when the sequence is noncollapsed, see Corollary \ref{noncollapsed}.

Finally let us give a positive answer to (Q$3$). Note that the spectral distance $\tilde \dist_{\mathrm{Spec}}$ in the sense of Kasue-Kumura is also well-defined for compact $\RCD(K, N)$ spaces (see Proposition \ref{prosss}).
\begin{theorem}\label{kk} 
Under the same setting as in Theorem \ref{mainthm2}, we see that $\mathbb{X}_i$ $\tilde \dist_{\mathrm{Spec}}$-converge to $\mathbb{X}$ if and only if $\mathbb{X}_i$ mGH-converge to $\mathbb{X}$. 
\end{theorem}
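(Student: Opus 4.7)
The plan is to prove the two implications separately. The backward direction $(\Leftarrow)$ is essentially a matter of heat kernel convergence with sharp control at the endpoints $t\to 0$ and $t\to\infty$, and the forward direction $(\Rightarrow)$ is then reduced to $(\Leftarrow)$ via mGH-precompactness together with the nondegeneracy of $\tilde\dist_{\mathrm{Spec}}$.

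For $(\Leftarrow)$, assume $\mathbb{X}_i\stackrel{\mathrm{mGH}}{\to}\mathbb{X}$ with $\epsilon_i$-mGH-approximations $f_i\colon X_i\to X$, and aim to bound
\begin{equation*}
\sup_{x,\tilde x,t} e^{-(t+1/t)}\bigl|p_{X_i}(x,\tilde x,t)-p_X(f_i(x),f_i(\tilde x),t)\bigr|
\end{equation*}
by splitting $t\in(0,\infty)$ into three regimes. On a compact interval $[a,b]\subset(0,\infty)$, uniform heat kernel convergence follows from the mGH-stability of heat kernels on $\RCD(K,N)$ spaces (Ambrosio--Honda and Gigli--Mondino--Savar\'e), upgraded to uniform convergence in $(x,\tilde x)$ via the Jiang--Li--Zhang H\"older estimate. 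For small $t$, the sharp Gaussian upper bound $p(x,y,t)\le Ct^{-N/2}e^{Ct}$ is uniformly absorbed by the factor $e^{-1/t}$. For large $t$, the Zhang--Zhu lower bound on $\lambda_1$ (uniform on compact $\RCD(K,N)$ with bounded diameter) yields $|p(\cdot,\cdot,t)-1/\meas(X)|\le Ce^{-ct}$, and since mGH-convergence also delivers $\meas_{X_i}(X_i)\to\meas_X(X)$, the remainder is absorbed by $e^{-t}$.

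For $(\Rightarrow)$, I first recover measure convergence from the $\tilde\dist_{\mathrm{Spec}}$-convergence: for a diverging sequence $t_i\to\infty$ chosen with $\epsilon_i e^{t_i+1/t_i}\to 0$ and $e^{-ct_i}\to 0$, the heat kernel bound at time $t_i$ combined with $p(x,x,t_i)\to 1/\meas_X(X)$ forces $\meas_{X_i}(X_i)\to\meas_X(X)$. Together with uniform diameter and $\RCD(K,N)$, this gives mGH-precompactness; let $\mathbb{Y}$ be any subsequential mGH limit. By $(\Leftarrow)$ one has $\mathbb{X}_{i_k}\to\mathbb{Y}$ in $\tilde\dist_{\mathrm{Spec}}$, so $\tilde\dist_{\mathrm{Spec}}(\mathbb{X},\mathbb{Y})=0$ by the triangle inequality for Kasue--Kumura's pseudodistance. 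The nondegeneracy of $\tilde\dist_{\mathrm{Spec}}$ on compact $\RCD(K,N)$ spaces (Varadhan's short-time formula $-4t\log p(x,y,t)\to\dist^2(x,y)$, valid on $\RCD$ spaces, recovers the metric, while the long-time limit recovers the measure) then yields $\mathbb{Y}\cong\mathbb{X}$, and the standard subsequence argument upgrades this to full mGH convergence of the original sequence.

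The main technical obstacle is the uniform control in $t$ tuned to the weight $e^{-(t+1/t)}$: the three regimes require genuinely distinct tools (mGH heat-kernel stability, the Li--Yau/Jiang--Li--Zhang Gaussian bound, and the spectral gap estimate), and all of them must be made uniform across the sequence. A secondary but real subtlety is nondegeneracy of $\tilde\dist_{\mathrm{Spec}}$ on $\RCD$ spaces, since it requires a Varadhan-type identification of the metric from short-time heat kernel asymptotics in the nonsmooth setting together with the observation that $\phi_0\equiv 1/\sqrt{\meas(X)}$ encodes the total mass via the long-time limit.
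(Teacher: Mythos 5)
Your proposal is correct and follows essentially the same route as the paper: the backward direction is the uniform-in-$t$ heat-kernel stability under mGH convergence (which the paper obtains from \cite{AmbrosioHondaTewodrose, ZZ} together with the Gaussian bounds of Theorem \ref{thm:gaussian}, and which your three-regime splitting makes explicit), and your forward direction via precompactness of $\mathcal{M}(K,N,d,v)$, subsequential limits, and the nondegeneracy of $\tilde\dist_{\mathrm{Spec}}$ (Proposition \ref{prosss}, proved by the same Varadhan-asymptotics argument you sketch) is precisely the unwound proof of the paper's cited fact that a continuous bijection from a compact space to a Hausdorff space is a homeomorphism. The only point the paper treats separately that you gloss over is ruling out collapse to a single point (Proposition \ref{proplower}), needed to place the sequence in a fixed $\mathcal{M}(K,N,d,v)$, but this is a minor omission handled by the same long-time/short-time estimates you already invoke.
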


\subsection{Strategy of proof}
Firstly let us introduce an outline of the proof of Theorem \ref{thmnond}. Under the assumption (\ref{subset}), we can find a continuous map $f:X \to Y$ preserving the embeddings $I^{\mathbf{a}}_t, I^{\mathbf{b}}_t$. The first ingredient of the proof is to show that the push-forward measure $f_{\sharp}\meas_X$ is equal to $c\meas_Y$ for some positive constant $c \in (0, \infty)$.

In the case of closed Riemannian manifolds, this is done in \cite{BerardBessonGallot} after proving the smoothness of $f$. Note that we can follow the same arguments as in \cite{BerardBessonGallot} if 
\begin{equation}\label{mm}
f_{\sharp}\meas_X \ll \meas_Y \quad \text{and} \quad \frac{\di f_{\sharp}\meas_X}{\di \meas_Y} \in L^2(Y, \meas_Y)
\end{equation}
hold.  However it may be hard to check (\ref{mm}) directly in this setting (for instance the Lipschitz continuity of $f$ is not enough to get (\ref{mm}) in general).
In order to overcome this difficulty by a simple trick, we use the dual heat flow $\tilde h_s$ acting on the space of all Borel probability measures on $Y$ to prove that $\tilde h_s(f_{\sharp}\meas_X /\meas_X(X))$ is equal to $\meas_Y$ up to a multiplication of a positive constant to $\meas_Y$. Then letting $s \to 0^+$ yields the desired equality $f_{\sharp}\meas_X=c\meas_Y$. In particular since the support of $f_{\sharp}\meas_X$ is $Y$, we easily see that $f$ is surjective.
Following \cite{BerardBessonGallot} with $f_{\sharp}\meas_X=c\meas_Y$, it holds that $f$ also preserves the eigenvalues and the eigenfunctions. In particular $f$ preserves $I^{\mathbf{a}}_s$ and $I_s^{\mathbf{b}}$ for \textit{any} $s \in (0, \infty)$. Therefore letting $s \to 0^+$ with the Varadhan type asymptotics established in \cite{JiangLiZhang} (see (\ref{varad})) allows us to conclude that $f$ preserves the distances. Note that this argument is also different from the one in \cite{BerardBessonGallot} as explained just after (\ref{p}). We thus have Theorem \ref{thmnond}.

Secondly we provide a sketch of the proof of Theorem \ref{mainthm2}. As a first step, we recall that under a mGH-convergence of compact $\RCD(K, N)$ spaces;
\begin{equation}
\mathbb{X}_i \stackrel{\mathrm{mGH}}{\to} \mathbb{X},
\end{equation}
the eigenvalues behave continuously as follows;
\begin{equation}\label{444}
\lambda_j(\mathbb{X}_i)\to \lambda_j(\mathbb{X}),\quad \forall j.
\end{equation}
Moreover the eigenfunctions also behave continuously with respect to the uniform and the $H^{1,2}$-strong convergence. This is proved in \cite{GigliMondinoSavare13} for $\RCD$ spaces (see also \cite{CheegerColding3, Fukaya}), and this behavior is also called a spectral convergence in the previous literatures. 

Then it is not hard to check by (\ref{444}) that the last two conditions in Theorem \ref{mainthm2} are equivalent to each other. Furthermore the remaining implications are done by applying the previous convergence results for $I^{\mathbf{a}}_t$ proved in \cite{AHPT} with Corollary \ref{cormain} (or Theorem \ref{thmnond}).

Finally let us add a comment on the proof of Theorem \ref{kk}.
After proving that $\tilde \dist_{\mathrm{Spec}}$ determines actually a distance even for compact $\RCD(K, N)$ spaces (see Proposition \ref{prosss}), the desired conclusion comes from the uniform convergence of heat kernels with respect to the mGH-convergence proved in \cite{AmbrosioHondaTewodrose, ZZ} and an elementary fact in topology that any bijective continuous map from a compact space to a Hausdorff space is indeed a homeomorphism.

\subsection{Organization of the paper}
In the next section, Section \ref{77}, we give a brief introduction on $\RCD$ spaces, mainly focusing on eigenfunctions/heat kernels for our purposes. Section \ref{7788} is devoted to the proofs of results about the spectral convergence in the sense of B\'erard-Besson-Gallot. As a related observation, we establish a \textit{canonical} mGH-approximation between compact $\RCD(K, N)$ spaces via spectral data, which has an independent interest. See Proposition \ref{prop:rela} and Theorem \ref{from}.
In Section \ref{77889}, we prove Theorem \ref{kk}. The final section, Section \ref{finalfinal}, discusses the case when the limit space is a single point, as the remaining case.

\textbf{Acknowledgement.}
A part of the work is done during the stay at the Fields Institute for the Thematic Program on Nonsmooth Riemannian and Lorentzian Geometry.
The  author wishes to thank the instituite and all of the organizers for their warm hospitalities. Moreover he would like to thank the referee for his/her very careful reading with
valuable suggestions for a revision, which greatly improved the presentation of the paper.
He acknowledges supports of the Grant-in-Aid for
Scientific Research (B) of 20H01799, the Grant-in-Aid for Scientific Research (B) of 21H00977 and Grant-in-Aid for Transformative Research Areas (A) of 22H05105.
\section{$\RCD$ space}\label{77}
In order to keep our short presentations in the paper, we assume that the readers are familiar with the theory of $\RCD$ spaces. The main purpose of this section is to provide a brief introduction on the theory. See \cite{A} for a nice survey on this topic.
\subsection{Definition and terminology}\label{rcddef}
A triple $(X, \dist_X, \meas_X)$, denoted by $\mathbb{X}$ for short, is said to be a \textit{metric measured space} if $(X, \dist_X)$ is a complete separable metric space and $\meas_X$ is a Borel measure on $X$ with full support and $\meas_X(A)<\infty$ for any bounded Borel subset $A$ of $X$. A map between metric measured spaces, $f:X \to Y$, is said to be an \textit{isometry} as metric measured spaces if it is an isometry as metric spaces and the push-forward measure $f_{\sharp}\meas_X$ of $\meas_X$ by $f$ coincides with $\meas_Y$.  

Fix a metric measured space $\mathbb{X}=(X, \dist_X, \meas_X)$. Then the $H^{1,2}$-\textit{Sobolev space}, denoted by $H^{1,2}(\mathbb{X})$, is defined by the finiteness domain of the \textit{Cheeger energy} $\Ch:L^2(X, \meas)\to [0, \infty]$ (see also \cite{Cheeger, Shanmugalingam}). Note that the Cheeger energy $\Ch(f)$ of $f \in H^{1,2}(\mathbb{X})$ can be written in terms of the canonical object $|\nabla f| \in L^2(X, \meas_X)$, called the \textit{minimal relaxed slope} of $f$. We say that $\mathbb{X}$ is \textit{infinitesimally Hilbertian}, written by IH for short below, if the $H^{1,2}$-Sobolev space is a Hilbert space. Note that if $\mathbb{X}$ is IH, then for all $f, g \in H^{1,2}(\mathbb{X})$, the pointwise inner product $\langle \nabla f, \nabla g\rangle (x)$ makes sense for $\meas_X$-a.e. $x \in X$. The domain  of the \textit{Laplacian} $\Delta_X$ of $\mathbb{X}$, denoted by $D(\Delta_X)$,  is defined by the set of all $f \in H^{1,2}(\mathbb{X})$ satisfying that there exists (a unique) $\phi \in L^2(X, \meas_X)$, denoted by $\Delta_X f$, such that
\begin{equation}
\int_X\langle \nabla f, \nabla g\rangle \di \meas_X=-\int_X\phi g\di \meas_X,\quad \forall g \in H^{1,2}(\mathbb{X}).
\end{equation}

We are now in a position to introduce the definition of $\RCD$ spaces. We say that $(X, \dist, \meas)$ is an $\RCD(K, N)$ \textit{space}, or $\RCD$ \textit{space} for short, for some $K \in \mathbb{R}$ and some $N \in [1, \infty]$ if the following four conditions are satisfied;
\begin{enumerate}
\item{(Riemannian assumption)} it is IH; 
\item{(Sobolev-to-Lipschitz property)} if a $H^{1,2}$-function $f \in H^{1,2}(\mathbb{X})$ satisfies $|\nabla f|(x) \le 1$ for $\meas_X$-a.e. $x \in X$, then $f$ has a $1$-Lipschitz representative;
\item{(volume growth condition)} there exist $x \in X$ and $C \in [1, \infty)$ such that $\meas_X(B_r(x)) \le Ce^{Cr^2}$ holds for any $r \in [1, \infty)$, where $B_r(x)$ denotes the open ball of radius $r$ centered at $x$;
\item{(weak Bochner inequality)} the Bochner inequality;
\begin{equation}
\frac{\Delta_X |\nabla f|^2}{2}\ge \frac{(\Delta_X f)^2}{N}+\langle \nabla \Delta_X f, \nabla f\rangle +K|\nabla f|^2
\end{equation}
is satisfied in a weak sense.  
\end{enumerate}
See \cite{AmbrosioGigliSavare14, AmbrosioGigliMondinoRajala, Gigli1} for the precise definition and see \cite{AmbrosioMondinoSavare, CavallettiMilman, ErbarKuwadaSturm} for equivalent definitions we are adopting. 
See also for instance \cite{BrueNaberSemola, BPS, BrueSemola, Gigli, KM, MondinoNaber} for the structure theory and recent developments.

\textit{Unless otherwise stated in the sequel (in particular except for the final section, Section \ref{finalfinal}), whenever we discuss $\RCD(K, N)$ spaces, we assume that they are not single points.} 

Finally in order to simplify our notations below, let us introduce the following.
\begin{definition}[$\mathcal{M}(K, N, d, v)$]\label{defm}
For all $K \in \mathbb{R}, N \in [1, \infty), d \in [1, \infty)$ and $v \in [1, \infty)$, let us denote by $\mathcal{M}=\mathcal{M}(K, N, d, v)$ the set of all compact $\RCD(K, N)$ spaces $\mathbb{X}$, up to isometries, such that $\mathrm{diam}(X, \dist_X) \in [d^{-1}, d]$ and $\meas_X(X) \in [v^{-1}, v]$ are satisfied.
\end{definition}
It is known from, for instance, \cite[Theorem 3.22]{ErbarKuwadaSturm} that $\mathcal{M}(K, N, d, v)$ with the measured Gromov-Hausdorff topology is compact because of the finiteness of $N$, which will be discussed in subsection \ref{mGHsub}.
Note:
\begin{itemize}
\item the finiteness of $N$ is essential to get the compactness of $\mathcal{M}$, namely, for instance, a sequence of unit closed balls $B^n$ in $\mathbb{R}^n$ with the canonical probability measures does not have a mGH convergent subsequence as $n \to \infty$. Thus we always assume that $N$ is finite in the sequel; 
\item though the definition above still makes sense in the case when $N<1$, in this case, we can prove that $X$ is a single point.
\end{itemize}

\subsection{Heat flow and its kernel}\label{subsec:heat}
This subsection is devoted to introducing the main topics of the paper; heat kernels and eigenfunctions. 
Let us fix an $\RCD(K, N)$ space $\mathbb{X}$ for some $K \in \mathbb{R}$ and some $N \in [1, \infty)$ in the sequel. 
Note that we will immediately use standard notations in this topic, for example, $C(a_1, a_3, \ldots, a_k)$ denotes a positive constant depending only on $a_1, a_2, \ldots, a_k$.
\subsubsection{General case}
The \textit{heat flow} of $\mathbb{X}$ starting at $f \in L^2(X, \meas_X)$ is defined by the absolutely continuous (or equivalently, smooth, in this setting, see \cite{GP2}) curve;
\begin{equation}
h_{\cdot}f:(0, \infty) \to L^2(X, \meas_X)
\end{equation}
satisfying that
$h_tf \in D(\Delta)$ holds for any $t \in (0, \infty)$, that $h_tf \to f$ in $L^2(X, \meas_X)$ as $t \to 0^+$ and that
\begin{equation}
\frac{\di}{\di t}h_tf=\Delta_X h_tf, \quad \forall t \in (0, \infty).
\end{equation}
Then the \textit{heat kernel $p_X(x, y, t)$ of $\mathbb{X}$} is determined by the continuous function $p_X:X \times X \times (0, \infty) \to \mathbb{R}$ satisfying that for any $f \in L^2(X, \meas_X)$, we have
\begin{equation}\label{eq:heatcon}
h_tf(x)=\int_Xf(y)p_X(x, y, t)\di \meas_X (y), \quad \text{for $\meas_X$-a.e.}\,\,\,x \in X.
\end{equation}
See also \cite{Sturm95, Sturm96}.
The sharp Gaussian (gradient) estimates on $p_X$  proved in \cite[Theorem 1.2 and Corollary 1.2]{JiangLiZhang} (see also \cite{GarofaloMondino, Jiang15}) are stated as follows.
\begin{theorem}[Gaussian estimate]\label{thm:gaussian}
For any $\epsilon>0$, there exists $C=C(K, N, \epsilon) \in (1, \infty)$ such that
\begin{equation}\label{eq:gaussian}
\frac{C^{-1}}{\meas_X (B_{\sqrt{t}}(x))}\exp \left(-\frac{\dist_X (x, y)^2}{(4-\epsilon)t}-Ct \right) \le p_X(x, y, t) \le \frac{C}{\meas_X (B_{\sqrt{t}}(x))}\exp \left( -\frac{\dist_X (x, y)^2}{(4+\epsilon)t}+Ct \right)
\end{equation}
holds for all $x,\, y \in X$ and $t \in (0, \infty)$ and that
\begin{equation}\label{eq:equi lip}
|\nabla_x p_X(x, y, t)|\le \frac{C}{\sqrt{t}\meas_X (B_{\sqrt{t}}(x))}\exp \left(-\frac{\dist_X(x, y)^2}{(4+\epsilon) t}+Ct\right)
\qquad\text{for $\meas_X$-a.e. $x\in X$}
\end{equation}
holds for all $t\in (0, \infty)$ and $y\in X$. In particular the Varadhan type asymptotics is also satisfied in this setting;
\begin{equation}\label{varad}
-4t\log p_X(x, y, t) \to \dist_X(x, y)^2, \quad \forall x, y \in X,\,\,\,\text{as}\,\,\,t\to0^+.
\end{equation}
\end{theorem}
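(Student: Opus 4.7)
The plan is to adapt the Bakry-Qian / Li-Yau strategy from the smooth Riemannian setting to the $\RCD(K,N)$ framework, as carried out in \cite{JiangLiZhang}. The engine is a Li-Yau type differential Harnack inequality for positive solutions of the heat equation, of the schematic form
\begin{equation*}
\alpha|\nabla\log u|^2 - \partial_t\log u \le \frac{N\alpha^2}{2t} + C(K,N,\alpha),\quad \alpha>1.
\end{equation*}
On a smooth manifold this is obtained by applying Bochner to $\log u$ and integrating an ODE in $t$. In the $\RCD$ setting the manipulation is no longer pointwise; one works with regularized logarithms $\log(u+\delta)$, uses that $|\nabla h_t f|^2/h_t f$ lies in a reasonable Sobolev class for nice test data $f$, and passes to the limit $\delta\to 0^+$ at the level of the integral Bochner inequality built into the definition of $\RCD(K,N)$.

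Once the Li-Yau inequality is in hand, the sharp Gaussian upper bound in (\ref{eq:gaussian}) with constant $4+\eps$ follows by integrating the associated parabolic Harnack inequality along a minimizing geodesic between $x$ and $y$ (geodesics exist since compact $\RCD(K,N)$ spaces are geodesic) and then optimizing $\alpha\to 1^+$ against the prefactor. The on-diagonal bound $p_X(x,x,t)\le C/\meas_X(B_{\sqrt t}(x))$ comes from ultracontractivity of $h_t$, which on $\RCD(K,N)$ follows from the Nash/Sobolev inequality supplied by Bishop-Gromov volume doubling together with the local $(1,2)$-Poincar\'e inequality. For the matching Gaussian lower bound I would run a Harnack chain argument: iterating the parabolic Harnack inequality along a chain of balls of radius $\sqrt t$ spaced $\sqrt t$ apart between $x$ and $y$ propagates the diagonal lower bound $p_X(x,x,t)\ge c/\meas_X(B_{\sqrt t}(x))$ (itself a consequence of mass conservation combined with the already-established upper bound) to the claimed off-diagonal shape, losing only $\eps$ in the exponent.

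For the gradient estimate (\ref{eq:equi lip}), I would combine the Bakry-\'Emery $L^2$-gradient bound $|\nabla h_t f|^2 \le e^{-2Kt}h_t(|\nabla f|^2)$, equivalent to the weak Bochner inequality, with the semigroup identity $p_X(x,y,2t)=\int_X p_X(x,z,t)p_X(z,y,t)\di\meas_X(z)$: taking $\nabla_x$ on the first factor, bounding it via Bakry-\'Emery at time $t$, and plugging the already-proved Gaussian upper bound into the second factor produces the desired $t^{-1/2}$ prefactor after optimizing in $t$. Finally, the Varadhan asymptotics (\ref{varad}) is an immediate corollary of the two-sided Gaussian estimates: taking logarithms in (\ref{eq:gaussian}), multiplying by $-4t$, and observing that $-4t\log\meas_X(B_{\sqrt t}(x))=O(t\log(1/t))\to 0$ by Bishop-Gromov, one sandwiches $-4t\log p_X(x,y,t)$ between $(1\pm O(\eps))\dist_X(x,y)^2 + o(1)$, and then sends $\eps\to 0$.

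The principal obstacle is the absence of pointwise second-order regularity: on a smooth manifold one differentiates $\log u$ without a second thought, whereas in $\RCD(K,N)$ every pointwise use of Bochner must be replaced by its integral (weak) form, the unknown must be suitably regularized and cut off, and the limit justified. This rigorous upgrade of the classical Li-Yau argument to the nonsmooth setting is precisely what \cite{JiangLiZhang} supply, and is the only genuinely new ingredient beyond the classical heat-kernel toolbox.
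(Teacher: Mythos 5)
First, a point of comparison: the paper offers no proof of this theorem at all --- it is imported verbatim from \cite[Theorem 1.2 and Corollary 1.2]{JiangLiZhang} (with \cite{GarofaloMondino, Jiang15} for the Li--Yau input and \cite{Sturm95, Sturm96} for the underlying parabolic theory). So you are reconstructing the argument of the cited source rather than an argument in this paper. Your overall roadmap is the right one: weak-form Li--Yau via the integral Bochner inequality, on-diagonal ultracontractivity from Bishop--Gromov doubling plus the Poincar\'e inequality, and the Varadhan limit by taking logarithms in the two-sided bound and using $t\log\meas_X(B_{\sqrt t}(x))\to 0$. That last step is correct as written.

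There is, however, a genuine gap in how you propose to get the sharp Gaussian \emph{upper} bound. A parabolic Harnack comparison obtained by integrating Li--Yau along a geodesic has the form $u(x,t_1)\le u(y,t_2)\,(t_2/t_1)^{N\alpha/2}\exp\bigl(+\alpha\dist_X(x,y)^2/(4(t_2-t_1))+C(t_2-t_1)\bigr)$: the exponential factor \emph{grows} with $\dist_X(x,y)$, so such inequalities can only propagate \emph{lower} bounds off the diagonal (which is exactly how you correctly obtain the $(4-\eps)$ lower bound). They cannot manufacture off-diagonal \emph{decay}, so ``integrating the Harnack inequality along a minimizing geodesic'' does not yield the upper bound in (\ref{eq:gaussian}). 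The constant $4+\eps$ in the upper bound requires Davies' exponential perturbation method or Grigor'yan's integrated maximum principle on top of the on-diagonal bound, which is what \cite{JiangLiZhang} actually run. A second, more minor issue: your route to (\ref{eq:equi lip}) via the Bakry--\'Emery estimate $2t|\nabla h_tf|^2\le e^{-2Kt}h_t(f^2)$ applied to $f=p_X(\cdot,y,t)$ forces a square root of the Gaussian factor of $p_X(x,y,2t)$ and so only delivers the exponent $\dist_X(x,y)^2/((8+\eps)t)$, not $\dist_X(x,y)^2/((4+\eps)t)$; the sharp version goes through the Li--Yau gradient inequality $|\nabla\log p_X|^2\le\alpha\,\partial_t\log p_X+N\alpha^2/(2t)+C$ combined with a bound on $\partial_tp_X$. (For the purposes of this paper the non-sharp exponent in the gradient bound would actually suffice, but the upper-bound mechanism must be repaired for the statement as quoted.)
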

Note that the heat flow can be extended to a map $h_t:L^p(X, \meas_X) \to L^p(X, \meas_X)$ for any $p \in [1, \infty]$ by satisfying (\ref{eq:heatcon}), and then we have 
\begin{equation}\label{s9asabryasabras}
h_t f \in L^{p}(X, \meas_X)\cap C(X),\quad \forall p \in [1, \infty],\quad \forall f \in L^p(X, \meas_X),\quad \forall t \in (0, \infty)
\end{equation}
because of  (\ref{eq:heatcon}), (\ref{eq:gaussian}) and (\ref{eq:equi lip}). Moreover the heat flow $\tilde{h}_t$ also acts on the set of all Borel probability measures on $X$, denoted by $\mathcal{P}(X)$, as the dual of $h_t$, namely $\tilde{h}_t:\mathcal{P}(X) \to \mathcal{P}(X)$ is defined by satisfying
\begin{equation}
\int_Xf\di \tilde{h}_t \mathfrak{n}=\int_Xh_tf\di \mathfrak{n}
\end{equation}
for any continuous functions $f$ on $X$ with bounded supports.
Then it is proved in \cite[Theorem 4.8]{AmbrosioGigliSavare15} (or just by applying (\ref{eq:equi lip})) that $\tilde{h}_t\mathfrak{n}=n_t\meas_X$ holds for some $n_t \in L^1(X, \meas_X)$ with a log-Harnack inequality  (see also \cite[Proposition 4.4]{AmbrosioHonda}). Since $\tilde{h}_t\mathfrak{n}=n_t\meas_X=(h_{t/2}n_{t/2})\meas_X$ holds, we know $n_t \in L^{1}(X, \meas_X)\cap C(X)$
because of applying (\ref{s9asabryasabras}) as $p=1$.
 This observation will  play a role in the proof of Lemma \ref{lem:orth} below.
\subsubsection{Compact case}\label{spectraltheory}
From now on we assume that $(X, \dist_X)$ is compact. 
For any $\lambda \in \mathbb{R}$, denoting by $E_{\lambda}(\mathbb{X})=\{f \in D(\Delta_X); \Delta_X f+\lambda f=0\}$, the \textit{multiplicity} of $\lambda$, denoted by $\nu_{\mathbb{X}}(\lambda)$,  is defined by the dimension of the linear space $E_{\lambda}(\mathbb{X})$.
Then we say that a real number $\lambda$ is an \textit{eigenvalue} of $-\Delta_X$ if $\nu_{\mathbb{X}}(\lambda) \ge 1$ holds (then $\lambda \ge 0$ holds). Any nonzero element of $E_{\lambda}(\mathbb{X})$ for an eigenvalue $\lambda$ is called an \textit{eigenfunction}.

On the other hand, the Bishop-Gromov inequality (see \cite[Theorem 5.31]{LottVillani} and \cite[Theorem 2.3]{Sturm06b}) and a Poincar\'e inequality (see \cite[Theorem 1]{Rajala}) show that the canonical inclusion $H^{1, 2}(\mathbb{X}) \hookrightarrow L^2(X, \meas_X)$ is a compact operator (see \cite[Theorem.8.1]{HK}).
This observation with a standard technique in functional analysis allows us to denote by 
\begin{equation}\label{eq:eign}
0=\lambda_0(\mathbb{X})<\lambda_1(\mathbb{X}) \le \lambda_2(\mathbb{X}) \le \cdots \to \infty
\end{equation}
the eigenvalues of the operator $-\Delta_X$ counted with multiplicities (see for instance \cite[Appendix]{Honda3}). On the other hand, \textit{without} counting multiplicities, we denote the eigenvalues by
\begin{equation}
0=\mu_0(\mathbb{X})<\mu_1(\mathbb{X}) < \mu_2(\mathbb{X}) < \cdots \to \infty
\end{equation}
and then we define $\nu_i(\mathbb{X})$ to be the multiplicity of $\mu_i(\mathbb{X})$, namely $\nu_i(\mathbb{X}):=\nu_{\mathbb{X}}(\mu_i(\mathbb{X}))$. Thus by definition, (\ref{eq:eign}) can be written by
\begin{equation}
0=\mu_0(\mathbb{X})<\underbrace{\mu_1(\mathbb{X}) = \cdots =\mu_1(\mathbb{X})}_{\nu_1(\mathbb{X})} <\underbrace{\mu_2(\mathbb{X})= \cdots = \mu_2(\mathbb{X})}_{\nu_2(\mathbb{X})}< \mu_3(\mathbb{X}) = \cdots \to \infty.
\end{equation}
Let us recall that for a sequence $\phi_i^X$ of corresponding eigenfunctions for the eigenvalues $\lambda_i(\mathbb{X})$ with $\|\phi_i^X\|_{L^2}=1$, $\{\phi_i^X\}_i$ is an $L^2$-orthonormal basis of $L^2(X, \meas_X)$.
 Moreover it is known that 
\begin{equation}\label{eq:expansion1}
p_X(x,y,t) = \sum_{i \ge 0} e^{- \lambda_i(\mathbb{X}) t} \phi_i^X(x) \phi_i^X (y),\quad \forall x, \,\,\forall y \in X, \,\,\forall t \in (0, \infty)
\end{equation}
holds and that
\begin{equation}\label{eq:expansion2}
p_X(\cdot,y,t) = \sum_{i \ge 0} e^{- \lambda_i(\mathbb{X}) t} \phi_i^X(y) \phi_i^X \qquad \text{in $H^{1,2}(\mathbb{X})$}
\end{equation}
holds for all $y\in X$ and $t \in (0, \infty)$. 
Combining (\ref{eq:expansion1}) and (\ref{eq:expansion2}) with (\ref{eq:equi lip}), we know the following quantitative estimates on eigenvalues/eigenfunctions;
\begin{equation}\label{eq:eigenfunction}
\|\phi_i^X\|_{L^\infty} \leq \tilde{C} \lambda_i(\mathbb{X})^{N/4}, \qquad \| \nabla \phi_i^X \|_{L^\infty} \leq \tilde{C} \lambda_i(\mathbb{X})^{(N+2)/4}, \qquad \lambda_i(\mathbb{X}) \ge \tilde{C}^{-1}i^{2/N}
\end{equation}
if $\mathrm{diam}(X, \dist_X) \in (0, d]$ holds for some positive number $d$,
where $\tilde{C}:=\tilde{C}(K, N, d) \in (1, \infty)$ (see the appendix of \cite{AHPT} and see also \cite{Jiang}). In particular (\ref{eq:eigenfunction}) implies that (\ref{eq:expansion1}) is also satisfied in $C(X\times X)$ for any fixed $t \in (0, \infty)$.

Finally let us end this subsection by giving the following technical lemma which will play a role in the proof of Theorem \ref{thmdeg}.
\begin{lemma}\label{lem:orth}
If a Borel measure $\mathfrak{n}$ on $X$ satisfies $\mathfrak{n} (X) \in (0, \infty)$ and
\begin{equation}
\int_X\phi^X_i\di \mathfrak{n}=0,\quad \forall i \ge 1,
\end{equation}
then $\mathfrak{n} =a \meas$ for some positive constant $a \in (0, \infty)$.
\end{lemma}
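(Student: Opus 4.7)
The plan is to use the eigenfunction expansion of the heat kernel together with the dual heat flow to show that $\mathfrak{n}$ coincides with a constant multiple of $\meas_X$ after applying $\tilde h_t$ for any $t>0$, and then to pass to the limit $t\to 0^+$.

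First, after replacing $\mathfrak{n}$ by the probability measure $\mathfrak{n}/\mathfrak{n}(X)$ (and rescaling at the end), I will consider the function
\begin{equation*}
g_t(y):=\int_X p_X(x,y,t)\di \mathfrak{n}(x),\qquad y\in X,\,t>0.
\end{equation*}
Thanks to (\ref{eq:expansion1}) and the uniform convergence of the series in $C(X\times X)$ guaranteed by (\ref{eq:eigenfunction}), I may interchange sum and integral to get
\begin{equation*}
g_t(y)=\sum_{i\ge 0}e^{-\lambda_i(\mathbb{X})t}\,\phi_i^X(y)\int_X\phi_i^X(x)\di \mathfrak{n}(x).
\end{equation*}
By hypothesis every term with $i\ge 1$ vanishes, and since $\lambda_0(\mathbb{X})=0$ and $\phi_0^X\equiv \meas_X(X)^{-1/2}$, the surviving $i=0$ term gives $g_t(y)\equiv \mathfrak{n}(X)/\meas_X(X)=:a$, a strictly positive constant independent of $y$ and $t$.

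Next I will identify $g_t$ with the density of $\tilde h_t\mathfrak{n}$ with respect to $\meas_X$. For any $f\in C(X)$, using the representation (\ref{eq:heatcon}), the symmetry of $p_X$ in its first two arguments, and Fubini,
\begin{equation*}
\int_X f\di \tilde h_t\mathfrak{n}=\int_X h_tf\di \mathfrak{n}=\int_X f(y)\left(\int_X p_X(x,y,t)\di \mathfrak{n}(x)\right)\di \meas_X(y)=\int_X f\cdot g_t\di \meas_X,
\end{equation*}
so $\tilde h_t\mathfrak{n}=g_t\meas_X=a\meas_X$ as Borel measures, for every $t>0$.

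Finally I will let $t\to 0^+$. For any $f\in C(X)$, the compactness of $X$ together with the Gaussian bound (\ref{eq:gaussian}) gives $h_tf\to f$ boundedly and pointwise on $X$, so dominated convergence yields $\int_X f\di \tilde h_t\mathfrak{n}=\int_X h_tf\di \mathfrak{n}\to \int_X f\di \mathfrak{n}$. Combining with the identity $\tilde h_t\mathfrak{n}=a\meas_X$ for all $t>0$ and the Riesz representation theorem, I conclude $\mathfrak{n}=a\meas_X$. The only delicate point is the interchange of sum and integral in the expansion of $g_t$, but this is immediate from the $C(X\times X)$-convergence of the spectral series recalled after (\ref{eq:eigenfunction}); every other step is routine.
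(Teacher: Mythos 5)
Your proof is correct and follows essentially the same route as the paper: regularize $\mathfrak{n}$ by the dual heat flow, use the hypothesis to see that the resulting density is the constant $\mathfrak{n}(X)/\meas_X(X)$, and let $t\to0^+$. The only (cosmetic) difference is that you identify the density explicitly by summing the spectral expansion of $p_X$, whereas the paper notes that the density $n_t$ of $\tilde h_t\mathfrak{n}$ lies in $L^2$ and is orthogonal to every $\phi_i^X$ with $i\ge1$, hence constant.
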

\begin{proof}
Without loss of generality we can assume $\mathfrak{n}(X)=1$.
For any $t>0$, denoting by $\tilde{h}_t\mathfrak{n}=n_t\meas$ for some $n_t \in L^{1}(X, \meas_X) \cap C(X)$ (thus $n_t \in L^2(X, \meas_X)$ because of the compactness of $X$), we have for any $i \ge 1$
\begin{equation}
\int_X\phi^X_i n_t\di \meas=\int_X\phi^X_i\di \left( \tilde{h}_t\mathfrak{n}\right) =\int_Xh_t\phi^X_i \di \mathfrak{n} = e^{-\lambda_i(\mathbb{X}) t}\int_X\phi_i^X \di \mathfrak{n}=0.
\end{equation}
Thus $n_t$ is a constant. Then letting $t \to 0^+$ completes the proof. 
\end{proof}
\section{Spectral distance in the sense of B\'erard-Besson-Gallot}\label{7788}
In this section we discuss embeddings $I^{\mathbf{a}}_t$ of compact $\RCD(K, N)$ spaces into $\ell^2$ as explained in the introduction. 
\subsection{Embedding via eigenfunctions}\label{88shsnsn}
Let us fix a compact $\RCD(K, N)$ space $\mathbb{X}$ for some $K \in \mathbb{R}$ and some $N \in [1, \infty)$, and fix $t \in (0, \infty)$. Then it is proved in \cite[subsection 4.2]{AHPT} that the map $\Phi_t^X:X \hookrightarrow L^2(X, \meas_X)$ defined by $\Phi_t^X(x):=(y\mapsto p_X(x,y,t))$ is a topological embedding. Thanks to (\ref{eq:equi lip}), we know that
$\Phi_t^X$ is $C(K, N, d, v, t_0)$-Lipschitz if $\mathbb{X} \in \mathcal{M}(K, N, d, v)$ and  $t \in [t_0^{-1}, t_0]$ are satisfied for some positive constants $d, v, t_0 \in [1, \infty)$.
In order to replace the target space $L^2(X, \meas_X)$ of $\Phi_t^X$ by $\ell^2$, we introduce the following.
\begin{definition}[Spectral data]
A sequence of functions on $X$
\begin{equation}
\mathbf{a} =(\phi_0^{\mathbf{a}}, \phi_1^{\mathbf{a}}, \phi_2^{\mathbf{a}},\ldots)
\end{equation}
is said to be a \textit{spectral datum} of $\mathbb{X}$ if $\Delta_X \phi_i^{\mathbf{a}}+\lambda_i(\mathbb{X})\phi_i^{\mathbf{a}}=0$ holds with $(\phi_i^{\mathbf{a}}, \phi_j^{\mathbf{a}})_{L^2}=\delta_{ij}$ (thus $\{\phi_i^{\mathbf{a}}\}_i$ is an orthonormal basis of $L^2(X, \meas_X)$). 
\end{definition}
Fix a spectral datum $\mathbf{a}$ of $\mathbb{X}$. Denoting by $\iota^{\mathbf{a}}:L^2(X, \meas_X) \simeq \ell^2$ the canonical isometry via $\mathbf{a}$, 
an embedding $\Phi_t^{\mathbf{a}}:X \hookrightarrow \ell^2$ defined by $\Phi_t^{\mathbf{a}}:=\iota^{\mathbf{a}}\circ \Phi_t^X$ is explicitly written by
\begin{equation}
\Phi_t^{\mathbf{a}}(x)=\left(e^{-\lambda_i(\mathbb{X})t}\phi_i^{\mathbf{a}}(x)\right)_{i \ge 0}
\end{equation}
because of (\ref{eq:expansion1}). Following \cite{BerardBessonGallot}, let us discuss a normalized embedding;
\begin{equation}
I^{\mathbf{a}}_t:X \hookrightarrow \ell^2
\end{equation}
defined by
\begin{equation}
I^{\mathbf{a}}_{t}(x):=\left(\sqrt{\meas_X (X)}e^{-\lambda_i(\mathbb{X})t/2}\phi_i^{\mathbf{a}}(x)\right)_{ i \ge 1}.
\end{equation}
\begin{remark}\label{scalei}
As a generalization of (\ref{eq:amb}), for all $c, s \in (0, \infty)$, letting $\mathbb{Z}:=(X, \sqrt{t/s}\dist_X, c^{-2}\meas_X)$ and $\mathbf{c}:=(c\phi_0^{\mathbf{a}}, c\phi_1^{\mathbf{a}},\ldots )$, we have
\begin{equation}
I^{\mathbf{a}}_t(x)=I^{\mathbf{c}}_s(x),\quad \forall x \in X=Z.
\end{equation}
This observation tells us that when we wish to compare $I^{\mathbf{a}}_t$ with $I^{\mathbf{b}}_s$ for different $s, t$, after rescaling the metrics, the situation is reduced to the case when $s=t$.
\end{remark}
Let us prove that any embedding $I^{\mathbf{a}}_t$ reconstructs the metric measured structure of $\mathbb{X}$ in the following sense. 
\begin{theorem}\label{thmdeg}
Let $\mathbb{Y}$ be a compact $\RCD(K,N)$ space and let $\mathbf{b}$ be a spectral datum of $\mathbb{Y}$. If a map $f:X \to Y$ satisfies
\begin{equation}\label{condi}
I^{\mathbf{a}}_{t}(x)=I^{\mathbf{b}}_{t}(f(x)),\quad \forall x \in X,
\end{equation}
namely \begin{equation}\label{eq:comp3}
\sqrt{\meas_X(X)}e^{-\lambda_i(\mathbb{X})t/2}\phi_i^{\mathbf{a}}(x)=\sqrt{\meas_Y(Y)}e^{-\lambda_i(\mathbb{Y})t/2}\phi_i^{\mathbf{b}}(f(x)),\quad \forall i \ge 1,\quad \forall x \in X,
\end{equation}
then $f$ gives an isometry as metric measured spaces, up to a multiplication of $\meas_Y$ by a positive constant.
\end{theorem}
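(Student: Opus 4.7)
The plan is to carry out four steps: (i) show $f_{\sharp}\meas_X = a\meas_Y$ for some $a \in (0, \infty)$, which will also give surjectivity of $f$; (ii) upgrade (\ref{condi}) to the identities $\lambda_i(\mathbb{X}) = \lambda_i(\mathbb{Y})$ for every $i$, together with the matching $\phi_i^{\mathbf{b}}\circ f = \sqrt{a}\, \phi_i^{\mathbf{a}}$; (iii) use the spectral expansion of the heat kernel to promote this to $p_Y(f(x), f(\tilde x), s) = a\,p_X(x, \tilde x, s)$ for \emph{every} $s \in (0, \infty)$; and (iv) let $s \to 0^+$ via the Varadhan asymptotics (\ref{varad}) to recover the metric identity.

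For (i), set $\mathfrak{n} := f_\sharp \meas_X$ on $Y$. Since $\phi_0^{\mathbf{a}}$ is a nonzero constant, $\int_X \phi_i^{\mathbf{a}}\di\meas_X = 0$ for every $i \ge 1$, and (\ref{eq:comp3}) rewrites $\phi_i^{\mathbf{b}}\circ f$ as a nonzero scalar multiple of $\phi_i^{\mathbf{a}}$. Hence
\begin{equation}
\int_Y \phi_i^{\mathbf{b}}\di\mathfrak{n}=\int_X (\phi_i^{\mathbf{b}}\circ f)\di\meas_X=0,\qquad \forall i \ge 1,
\end{equation}
and Lemma \ref{lem:orth} applied to $\mathbb{Y}$ (with its own orthonormal basis $\{\phi_i^{\mathbf{b}}\}_i$) yields $\mathfrak{n}=a\meas_Y$; evaluating the total mass gives $a=\meas_X(X)/\meas_Y(Y)>0$. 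Because $\supp\meas_Y=Y$, it follows that $f$ is surjective.

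For (ii), I would compute $\|\phi_i^{\mathbf{b}}\circ f\|_{L^2(\meas_X)}^2$ in two ways: via the change of variables $f_\sharp\meas_X=a\meas_Y$ it equals $a$, while via the explicit formula (\ref{eq:comp3}) it equals $(\meas_X(X)/\meas_Y(Y))\, e^{(\lambda_i(\mathbb{Y})-\lambda_i(\mathbb{X}))t}$. Combined with $a=\meas_X(X)/\meas_Y(Y)$, this forces $\lambda_i(\mathbb{X})=\lambda_i(\mathbb{Y})$ for every $i \ge 1$, so (\ref{condi}) collapses to $\phi_i^{\mathbf{b}}\circ f=\sqrt{a}\,\phi_i^{\mathbf{a}}$ for $i\ge 1$; a direct check on the normalized constants $\phi_0^{\mathbf{a}},\phi_0^{\mathbf{b}}$ gives the same identity for $i=0$ (up to a sign, which is immaterial below). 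Substituting these matchings into the spectral expansion (\ref{eq:expansion1}) yields, for every $s \in (0,\infty)$ and every $x, \tilde x \in X$,
\begin{equation}
p_Y(f(x), f(\tilde x), s) = \sum_{i \ge 0}e^{-\lambda_i(\mathbb{Y})s}\phi_i^{\mathbf{b}}(f(x))\,\phi_i^{\mathbf{b}}(f(\tilde x)) = a\,p_X(x, \tilde x, s).
\end{equation}

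Finally, applying (\ref{varad}) to both sides of this identity and observing that $-4s\log a \to 0$ as $s\to 0^+$ gives $\dist_Y(f(x), f(\tilde x)) = \dist_X(x, \tilde x)$, so $f$ is an isometric embedding, hence, by surjectivity, an isometry; together with $f_\sharp\meas_X = a\meas_Y$ this proves the theorem. The main obstacle is step (i): in the smooth BBG argument one first establishes the smoothness of $f$ and the absolute continuity (\ref{mm}) of $f_\sharp\meas_X$, which is not available in the $\RCD$ setting. Lemma \ref{lem:orth}, whose proof rests on the regularizing action of the dual heat flow, is precisely the tool that bypasses this missing a priori regularity.
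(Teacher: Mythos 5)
Your steps (i)--(iv) reproduce the paper's own argument almost verbatim: the same use of Lemma \ref{lem:orth} to get $f_{\sharp}\meas_X=a\meas_Y$ and surjectivity, the same integration of the squared identity (\ref{eq:comp3}) to force $\lambda_i(\mathbb{X})=\lambda_i(\mathbb{Y})$, the same passage through the expansion (\ref{eq:expansion1}) to the heat-kernel identity $\meas_X(X)\,p_X(x,\tilde x,s)=\meas_Y(Y)\,p_Y(f(x),f(\tilde x),s)$ for all $s$, and the same appeal to the Varadhan asymptotics (\ref{varad}). Your closing remark about Lemma \ref{lem:orth} replacing the regularity hypotheses (\ref{mm}) of the smooth argument is exactly the point the paper makes.

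There is, however, one genuine gap: the hypothesis is only that $f$ is \emph{a map} satisfying (\ref{condi}), so before you may write $\mathfrak{n}:=f_{\sharp}\meas_X$ and use the change of variables $\int_Y \phi_i^{\mathbf{b}}\di\mathfrak{n}=\int_X(\phi_i^{\mathbf{b}}\circ f)\di\meas_X$, you must know that $f$ is Borel measurable; the pushforward of a Borel measure under an arbitrary map is not defined. (Your observation that each $\phi_i^{\mathbf{b}}\circ f$ is a scalar multiple of the continuous function $\phi_i^{\mathbf{a}}$ makes those particular integrals meaningful, but it does not make $\mathfrak{n}$ a Borel measure, and Lemma \ref{lem:orth} is a statement about Borel measures.) The paper spends the first part of its proof establishing that $f$ is in fact continuous: given $x_j\to x$ with $f(x_j)\to y$ along a subsequence, (\ref{eq:comp3}) forces $\phi_i^{\mathbf{b}}(y)=\phi_i^{\mathbf{b}}(f(x))$ for all $i$, hence $p_Y(y,w,s)=p_Y(f(x),w,s)$ by (\ref{eq:expansion1}), and then (\ref{varad}) gives $y=f(x)$. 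Equivalently, one can note that $f=(I^{\mathbf{b}}_t)^{-1}\circ I^{\mathbf{a}}_t$ and that $I^{\mathbf{b}}_t$ is a topological embedding of the compact space $Y$. Either way, this step must be inserted at the start of your step (i); with it, the rest of your argument goes through exactly as written.
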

\begin{proof}
Firstly let us prove that $f$ is continuous (then in particular $f$ is Borel measurable).
Take a convergent sequence $x_j \to x$ in $X$. After passing to a subsequence, with no loss of generality, we can assume that $f(x_j) \to y$ in $Y$ for some $y \in Y$. Then since for any $i \ge 1$
\begin{align}
\sqrt{\meas_Y(Y)}e^{-\lambda_i(\mathbb{Y})t/2}\phi_i^{\mathbf{b}}(y)&=\lim_{j \to \infty}\sqrt{\meas_Y(Y)}e^{-\lambda_i(\mathbb{Y})t/2}\phi_i^{\mathbf{b}}(f(x_j)) \nonumber \\
&=\lim_{j \to \infty}\sqrt{\meas_X(X)}e^{-\lambda_i(\mathbb{X})t/2}\phi_i^{\mathbf{a}}(x_j) \nonumber \\
&=\sqrt{\meas_X(X)}e^{-\lambda_i(\mathbb{X})t/2}\phi_i^{\mathbf{a}}(x) =\sqrt{\meas_Y(Y)}e^{-\lambda_i(\mathbb{Y})t/2}\phi_i^{\mathbf{b}}(f(x)),
\end{align}
we have $\phi_i^{\mathbf{b}}(y)=\phi_i^{\mathbf{b}}(f(x))$.
In particular, thanks to (\ref{eq:expansion1}), it holds that
\begin{equation}\label{eq:varr}
p_{Y}(y, w, s)=p_{Y}(f(x), w, s),\quad \forall w \in Y,\,\,\,\forall s \in (0, \infty).
\end{equation}
Thus letting $s \to 0^+$  in (\ref{eq:varr})  with (\ref{varad}) shows $\dist_Y(y, w)=\dist_Y(f(x), w)$ for any $w \in Y$, thus $y=f(x)$, which proves $f(x_j) \to f(x)$. Thus $f$ is continuous.

Next we consider a Borel measure $f_{\sharp}\meas_X$ on $Y$. 
Since  $f_{\sharp}\meas_X(Y)=\meas_X(X) \in (0, \infty)$ and
\begin{align}
\int_Y\phi_i^{\mathbf{b}}\di (f_{\sharp}\meas_X)&=\int_X\phi^{\mathbf{b}}_i(f(x))\di \meas_X(x) \nonumber \\
&= \left( \frac{\meas_X(X)}{\meas_Y(Y)}\right)^{1/2}\cdot e^{-(\lambda_i(\mathbb{X})-\lambda_i(\mathbb{Y}))t/2} \cdot \int_X\phi_i^{\mathbf{a}}\di \meas_X \nonumber \\
&=0,\quad \forall i \ge 1,
\end{align}
Lemma \ref{lem:orth} allows us to conclude that $f_{\sharp}\meas_X =  c \meas_Y$ holds for some positive constant $c \in (0, \infty)$ (thus $c=\meas_X(X)/\meas_Y(Y)$).

In particular the support of $f_{\sharp}\meas_X$ coincides with $Y$.
Therefore $f$ is surjective because if $Y \setminus f(X) \neq \emptyset$, then since $f(X)$ is compact, we can find a ball $B$ included in $Y \setminus f(X)$, thus $f_{\sharp}\meas_X(B)=\meas_X(\emptyset)=0$, which contradicts a fact that $\supp f_{\sharp}\meas_X=Y$.

Note that (\ref{eq:comp3}) yields
\begin{equation}\label{eq:comp4}
\meas_X(X)e^{-\lambda_i(\mathbb{X})t}(\phi_i^{\mathbf{a}}(x))^2=\meas_Y(Y)e^{-\lambda_i(\mathbb{Y})t}(\phi_i^{\mathbf{b}}(f(x)))^2,\quad \forall i \ge 1.
\end{equation}
Integrating this over $X$ shows
\begin{equation}
\meas_X(X)e^{-\lambda_i(\mathbb{X})t}=\meas_Y(Y)e^{-\lambda_i(\mathbb{Y})t} c \int_Y(\phi_i^{\mathbf{b}})^2\di \meas_Y =\meas_X(X)e^{-\lambda_i(\mathbb{Y})t}.
\end{equation}
Thus $\lambda_i(\mathbb{X})=\lambda_i(\mathbb{Y})$ for any $i$. In particular combining this with (\ref{eq:comp3}) yields
\begin{equation}\label{eq:heatasa}
\meas_X(X) p_{X}(x, w, s)=\meas_Y(Y)p_{Y}(f(x), f(w), s),\quad \forall x, w \in X,\,\,\,\forall s \in (0, \infty).
\end{equation}
Thus (\ref{varad}) under letting $s\to 0^+$ in  (\ref{eq:heatasa}) shows that $f$ preserves the distance, namely $f$ gives an isometry as metric spaces. Therefore we conclude.
\end{proof}
We are now in a position to prove Theorem \ref{thmnond}.
\begin{proof}[Proof of Theorem \ref{thmnond}]
Let us define a map $f:X \to Y$ by $f(x):=(I^{\mathbf{b}}_t)^{-1} \circ I^{\mathbf{a}}_t(x)$, where this is well-defined because of (\ref{subset}). Then since it is trivial that (\ref{condi}) is satisfied, Theorem \ref{thmdeg} completes the proof.
\end{proof}
\subsection{mGH and convergence of spectral data}\label{mGHsub}
Let us fix a measured Gromov-Hausdorff (mGH) convergent sequence of compact $\RCD(K,N)$ spaces for some $K \in \mathbb{R}$ and some $N \in [1, \infty)$;
\begin{equation}\label{eqmgh}
\mathbb{X}_i \xrightarrow[f_i]{\mathrm{mGH}} \mathbb{X}, \quad (\text{or}\,\,\,\mathbb{X}_i \stackrel{\mathrm{mGH}}{\to} \mathbb{X}\,\,\,\text{for short}), \quad \text{in}\,\,\,\mathcal{M}(K, N, d, v),
\end{equation}
where $\{f_i\}_i$ denote fixed $\epsilon_i$-mGH approximations from $X_i$ to $X$ (as explained below) realizing the mGH convergence from $\mathbb{X}_i$ to $\mathbb{X}$ (see Definition \ref{defm} for the definition of $\mathcal{M}(K, N, d, v)$). 

The following is a definition of a mGH approximation which will be compared with the spectral one later. Let us emphasize that in the sequel except for the final subsection, subsection \ref{result}, any sequence of compact $\RCD(K, N)$ spaces we will mainly discuss is always assumed that their diameters and total measures are bounded away from $0$ and $\infty$, namely they are in $\mathcal{M}(K, N, d, v)$ for some $d \in [1, \infty)$ and some $v\in [1, \infty)$.
\begin{definition}[GH/mGH approximation]\label{defmgh}
For two compact $\RCD(K, N)$ spaces $\mathbb{Y}, \mathbb{Z}$, a Borel map $f:Y \to Z$ is said to be an \textit{$\epsilon$-Gromov-Hausdorff (GH) approximation} (as metric spaces) if $|\dist_Z(f(x), f(y))-\dist_Y(x,y)|\le \epsilon$ holds for all $x,y \in Y$ with $Z=B_{\epsilon}(f(Y))$. Furthermore we say that it is an \textit{$\epsilon$-measured Gromov-Hausdorff (mGH) approximation} (as metric measure spaces) if it is an $\epsilon$-GH approximation with
\begin{equation}
\mathsf{W}_Z^2\left(\frac{f_{\sharp}\meas_Y}{\meas_Y(Y)}, \frac{\meas_Z}{\meas_Z(Z)}\right) +\left|\meas_Y(Y)-\meas_Z(Z)\right| \le \epsilon,
\end{equation}
where $\mathsf{W}^2_Z$ denotes the $L^2$-Wasserstein distance on $\mathcal{P}(Z)$ ($=\mathcal{P}_2(Z)$ because of the compactness of $Z$). 
\end{definition}
Recall the \textit{$L^2$-Wasserstein dinstance on $\mathcal{P}(Z)$; for all $\mathfrak{n}_i \in \mathcal{P}(Z)(i=1,2)$, 
\begin{equation}
\mathsf{W}_Z^2\left( \mathfrak{n}_1,  \mathfrak{n}_2\right):=\left( \inf_{\gamma}\int_{Z \times Z}\dist_Z(x,y)^2\di \gamma \right)^{1/2},
\end{equation}
where $\gamma$ runs over all Borel probability measures on $Z \times Z$ with $(\pi_i)_{\sharp}\gamma=\mathfrak{n}_i$, and $\pi_i:Z \times Z \to Z$ denotes the projection of the $i$-th factor.
}

Note that GH/mGH convergences are metrizable by distances $\dist_{\mathrm{GH}}$/$\dist_{\mathrm{mGH}}$, respectively and that their convergences can be also expressed by existences of approximations, as expressed in (\ref{eqmgh}) (see for instance \cite{Sturm06}).

In the setting (\ref{eqmgh}), we say that a sequence of points $x_i \in X_i$ \textit{converges} to a point $x \in X$ if $f_i(x_i) \to x$ in $X$. Note that this convergence notion depends on the choice of the approximations $\{f_i\}_i$. 
\begin{remark}\label{remrem}
This is an intrinsic approach for the mGH convergence. See \cite{GigliMondinoSavare13} for an extrinsic one and the equivalence to our setting.  See also \cite{AmbrosioHonda, AmbrosioHonda2} for related convergence notions for functions, which will be used immendiately in the sequel.
\end{remark}
Let us define the convergence of spectral datas with respect to the mGH convergence. 
\begin{definition}[Convergence of spectral data]
Recall our assumption (\ref{eqmgh}).
We say that a sequence of spectral data $\mathbf{a}_i$ of $\mathbb{X}_i$ \textit{converge} to a spectral datum $\mathbf{a}$ of $\mathbb{X}$ with respect to (\ref{eqmgh}) if $\phi_j^{\mathbf{a}_i}$ pointwisely converge to $\phi_j^{\mathbf{a}}$ for any $j$, namely 
\begin{equation}
\phi_j^{\mathbf{a}_i}(x_i) \to \phi_j^{\mathbf{a}}(x),\quad \text{whenever}\,\,\,x_i \to x.
\end{equation}
Then we denote by $\mathbf{a}_i \to \mathbf{a}$ the convergence.
\end{definition}
Note that the pointwise convergence of $\phi_j^{\mathbf{a}_i}$ implies the uniform and the $H^{1.2}$-strong converences of $\phi_j^{\mathbf{a}_i}$ because of (\ref{eq:eigenfunction}).
The following is a direct consequence of \cite[Theorem 7.8]{GigliMondinoSavare13}.
\begin{theorem}[Compactness of spectral datas]\label{prop:comp}
Recall our assumption (\ref{eqmgh}). Then
any sequence of spectral data $\mathbf{a}_i$ of $\mathbb{X}_i$ has a convergent subsequence to a spectral datum $\mathbf{a}$ of $\mathbb{X}$ with respect to (\ref{eqmgh}). In particular we have
\begin{equation}\label{00}
\lambda_j(\mathbb{X}_i) \to \lambda_j(\mathbb{X}),\quad \forall j.
\end{equation}
\end{theorem}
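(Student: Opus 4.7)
The statement is essentially a packaging of [Gigli--Mondino--Savar\'e, Theorem 7.8] together with the uniform eigenfunction bounds (\ref{eq:eigenfunction}). My plan is the following three-step strategy.

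The first step is to obtain the eigenvalue convergence (\ref{00}). Along the mGH-convergent sequence (\ref{eqmgh}) in $\mathcal{M}(K,N,d,v)$, the Cheeger energies $\Ch_{\mathbb{X}_i}$ Mosco-converge to $\Ch_\mathbb{X}$, the compact inclusions $H^{1,2}(\mathbb{X}_i)\hookrightarrow L^2(X_i,\meas_{X_i})$ are uniformly compact, and the total measures are uniformly pinched; the abstract spectral stability in [GMS] then yields (\ref{00}), and in fact a Mosco-type convergence of eigenspaces. In particular, along any subsequence along which $\lambda_j(\mathbb{X}_i)\to\lambda$ and $\psi_i$ is an $L^2$-normalized eigenfunction at $\lambda_j(\mathbb{X}_i)$, any $L^2$-weak cluster point $\psi$ is an eigenfunction of $-\Delta_X$ at $\lambda$, and the convergence upgrades to $H^{1,2}$-strong.

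The second step is to promote this to pointwise/uniform convergence of the $\phi_j^{\mathbf{a}_i}$ and extract a diagonal subsequence. For each fixed $j$, (\ref{00}) gives a uniform bound on $\lambda_j(\mathbb{X}_i)$, and then (\ref{eq:eigenfunction}) gives a uniform $L^\infty$-bound and a uniform Lipschitz bound on $\phi_j^{\mathbf{a}_i}$. An Arzel\`a--Ascoli argument taken along the mGH approximations $f_i:X_i\to X$ (in the sense of \cite{AmbrosioHonda, AmbrosioHonda2}) then extracts a subsequence along which $\phi_j^{\mathbf{a}_i}$ converges pointwise, and in fact uniformly after the $f_i$-identifications, to a limit $\phi_j^{\mathbf{a}}\in\Lip(X)$. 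By the Mosco-type statement of Step 1, this limit is an $L^2$-normalized eigenfunction of $-\Delta_X$ at $\lambda_j(\mathbb{X})$; the $L^2$-normalization and the $L^2$-orthogonality between different indices survive the limit because uniform convergence combined with $\meas_{X_i}(X_i)\to\meas_X(X)$ makes $L^2$-inner products continuous under the mGH approximations. A standard diagonal procedure produces a single subsequence along which $\phi_j^{\mathbf{a}_i}\to \phi_j^{\mathbf{a}}$ pointwise for every $j$ simultaneously.

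The third step is to check that $\mathbf{a}=(\phi_j^{\mathbf{a}})_{j\ge 0}$ is a spectral data of $\mathbb{X}$, i.e.\ that the orthonormal system constructed above is an orthonormal \emph{basis} of $L^2(X,\meas_X)$. This is where the only real subtlety lies: the multiplicities $\nu_{\mathbb{X}_i}(\mu)$ may differ from $\nu_\mathbb{X}(\mu)$ for individual $i$, but (\ref{00}), read with multiplicities, forces that for every $R>0$ the number of $\lambda_j(\mathbb{X}_i)\le R$ stabilizes to the number of $\lambda_j(\mathbb{X})\le R$. Since our limits supply an orthonormal family of eigenfunctions of $-\Delta_X$ indexed in the same way, this counting argument shows that every eigenspace of $-\Delta_X$ is entirely covered by the $\phi_j^{\mathbf{a}}$, so they form an orthonormal basis.

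The main obstacle is precisely this last step: verifying completeness of the limiting family when eigenvalues collide or split in the limit. Once the spectral convergence of [GMS] is invoked with its full multiplicity matching, the combinatorics closes and the theorem follows.
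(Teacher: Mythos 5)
Your proposal is correct and takes essentially the same route as the paper, which dispenses with the proof entirely by citing \cite[Theorem 7.8]{GigliMondinoSavare13}; your three steps are precisely the content of that citation (Mosco convergence of Cheeger energies and spectral stability), combined with the uniform bounds (\ref{eq:eigenfunction}) that the paper itself invokes right after the definition of convergence of spectral datas to upgrade pointwise to uniform and $H^{1,2}$-strong convergence. Your completeness argument in Step 3 is sound: since (\ref{00}) holds with multiplicities, for each distinct eigenvalue $\mu_l(\mathbb{X})$ the $\nu_l(\mathbb{X})$ limiting functions with that eigenvalue form an orthonormal set in the $\nu_l(\mathbb{X})$-dimensional eigenspace, hence span it, so the limit family is a basis.
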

\begin{corollary}\label{cordef}
The following two conditions are equivalent (recall our assumption (\ref{eqmgh})).
\begin{enumerate}
\item We have
\begin{equation}\label{223}
\mu_j(\mathbb{X}_i) \to \mu_j(\mathbb{X}),\quad \forall j
\end{equation}
and 
\begin{equation}\label{label}
\nu_j(\mathbb{X}_i) \to \nu_j(\mathbb{X}), \quad \forall j.
\end{equation}
\item We have
\begin{equation}\label{jjn}
\mu_j(\mathbb{X}_i) \to \mu_j(\mathbb{X}), \quad \forall j.
\end{equation}
\end{enumerate}
\end{corollary}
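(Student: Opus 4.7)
The plan rests on Theorem~\ref{prop:comp}, which, from the ambient mGH convergence assumed throughout this subsection, already delivers the convergence $\lambda_k(\mathbb{X}_i)\to\lambda_k(\mathbb{X})$ of the full spectrum counted with multiplicities. The implication (1)$\Rightarrow$(2) is then immediate, since (\ref{jjn}) is literally included in (1). All the work is therefore in the direction (2)$\Rightarrow$(1): upgrading convergence of the \emph{distinct} eigenvalues to convergence of their multiplicities.

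For that direction I would run a two-way counting argument at each fixed index. Abbreviate $s_j(\mathbb{Y}) := 1 + \sum_{l=1}^{j}\nu_l(\mathbb{Y})$ for any compact $\RCD(K,N)$ space $\mathbb{Y}$, so that $s_j(\mathbb{Y})$ is the number of $\lambda_k(\mathbb{Y})$ lying in $[0,\mu_j(\mathbb{Y})]$; in particular $\lambda_{s_j(\mathbb{X})-1}(\mathbb{X}) = \mu_j(\mathbb{X})$ and $\lambda_{s_j(\mathbb{X})}(\mathbb{X}) = \mu_{j+1}(\mathbb{X})$. Fix $j \ge 1$ and choose $\epsilon > 0$ strictly smaller than half the minimal gap $\min_{0\le l \le j}(\mu_{l+1}(\mathbb{X})-\mu_l(\mathbb{X}))$ of the limit spectrum. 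I will then count $\#\{k : \lambda_k(\mathbb{X}_i)\le \mu_j(\mathbb{X})+\epsilon\}$ in two ways for large $i$. On the one hand, hypothesis (\ref{jjn}) forces $|\mu_l(\mathbb{X}_i)-\mu_l(\mathbb{X})| < \epsilon$ for every $l \le j+1$ once $i$ is large, so the distinct eigenvalues of $\mathbb{X}_i$ in $[0,\mu_j(\mathbb{X})+\epsilon]$ are exactly $\mu_0(\mathbb{X}_i),\ldots,\mu_j(\mathbb{X}_i)$, and the count equals $s_j(\mathbb{X}_i)$. On the other hand, Theorem~\ref{prop:comp} yields $\lambda_{s_j(\mathbb{X})-1}(\mathbb{X}_i)\to\mu_j(\mathbb{X})$ and $\lambda_{s_j(\mathbb{X})}(\mathbb{X}_i)\to\mu_{j+1}(\mathbb{X})$, so for large $i$ the same count equals $s_j(\mathbb{X})$. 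Equating the two expressions and taking successive differences in $j$ gives $\nu_j(\mathbb{X}_i) = \nu_j(\mathbb{X})$ eventually, which is exactly (\ref{label}).

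No serious obstacle is anticipated; once Theorem~\ref{prop:comp} is in hand, the corollary reduces to the essentially tautological observation that continuity of the distinct spectrum, combined with continuity of the counted spectrum, forces continuity of multiplicities. The only mildly delicate point is that $\epsilon$ must be tuned to the gaps of the \emph{limit} spectrum, but since these gaps are strictly positive and the index $j$ is fixed before $i$, this causes no difficulty.
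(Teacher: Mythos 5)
Your argument is correct and is essentially the paper's own proof: both directions rest on Theorem~\ref{prop:comp} (convergence of the counted spectrum $\lambda_k(\mathbb{X}_i)\to\lambda_k(\mathbb{X})$) combined with the uniform gap $\mu_{j+1}(\mathbb{X}_i)\ge\mu_j(\mathbb{X}_i)+\tau$ supplied by (\ref{jjn}), which separates the eigenvalue clusters and forces the multiplicities to stabilize. The paper phrases this as an induction on $j$ while you package it as an explicit two-way count via the partial sums $s_j$, but the mechanism is identical.
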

\begin{proof}
Let us give a proof of the nontrivial implication, from (2) to (1). Assume that (2) is satisfied.  It is trivial that (\ref{label}) holds for $j=0$ because of the $(2,2)$-Poincar\'e inequality.
The validity of (\ref{jjn}) for $j=2$ implies that there exists $\tau \in (0, 1)$ such that $\mu_2(\mathbb{X}_i) \ge \mu_1(\mathbb{X}_i)+\tau$  holds for any $i$. Combining this with the validity of (\ref{jjn}) for $j=1$ yields that (\ref{label}) holds for $j=1$. Similarly by induction for $j$ we have (\ref{label}) for any $j$, namely (1) holds.
%
\end{proof}
Finally we end this subsection by introducing convergence results on $I_t^{\mathbf{a}}$ with respect to the mGH convergence, proved in \cite[Theorem 5.19]{AHPT}.
\begin{theorem}[Convergence of $I^{\mathbf{a}}_t$]\label{thm:convi}
Recall our assumption (\ref{eqmgh}).
We then have 
\begin{equation}
I_{t_i}^{\mathbf{a}_i}(X_i) \to I_t^{\mathbf{a}}(X)
\end{equation}
with respect to the Hausdorff distance $\dist_{\ell^2}^{\mathrm{H}}$ in $\ell^2$, and 
\begin{equation}
I_{t_i}^{\mathbf{a}_i} \to I_t^{\mathbf{a}}
\end{equation}
with respect to the uniform convergence,
for all convergent sequences of positive numbers $t_i \to t$ in $(0, \infty)$ and of spectral data $\mathbf{a}_i \to \mathbf{a}$. 
\end{theorem}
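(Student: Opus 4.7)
The plan is to establish the uniform convergence $I_{t_i}^{\mathbf{a}_i} \to I_t^{\mathbf{a}}$ first, and then deduce the Hausdorff convergence of images as a by-product. Fix $\epsilon_i$-mGH approximations $f_i: X_i \to X$ realizing (\ref{eqmgh}); the natural formulation of the uniform convergence in the varying-space setting is
\begin{equation*}
\sup_{x_i \in X_i} \bigl\| I_{t_i}^{\mathbf{a}_i}(x_i) - I_t^{\mathbf{a}}(f_i(x_i)) \bigr\|_{\ell^2} \longrightarrow 0.
\end{equation*}
Since each coordinate of $I_t^{\mathbf{a}}$ has the form $\sqrt{\meas_X(X)}\, e^{-\lambda_j(\mathbb{X}) t/2} \phi_j^{\mathbf{a}}$, the content of the statement lies in controlling the infinite tail of the defining series uniformly in $i$.

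The core step splits the squared $\ell^2$-distance between $I_{t_i}^{\mathbf{a}_i}(x_i)$ and $I_t^{\mathbf{a}}(f_i(x_i))$ at a large index $N_0$. For the first $N_0$ coordinates, I would invoke Theorem \ref{prop:comp} to obtain $\lambda_j(\mathbb{X}_i) \to \lambda_j(\mathbb{X})$ together with the convergence $\phi_j^{\mathbf{a}_i} \to \phi_j^{\mathbf{a}}$; the equi-Lipschitz bound $\|\nabla \phi_j^{\mathbf{a}_i}\|_{L^\infty} \leq \tilde{C} \lambda_j(\mathbb{X}_i)^{(N+2)/4}$ from (\ref{eq:eigenfunction}) upgrades the pointwise convergence to uniform convergence (Arzelà–Ascoli in the varying-space sense), and the mGH convergence in $\mathcal{M}(K,N,d,v)$ supplies $\meas_{X_i}(X_i) \to \meas_X(X)$. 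Hence this head contribution tends to zero uniformly in $x_i$ for each fixed $N_0$. For the tail $j \geq N_0$, each embedding is estimated separately using the $L^\infty$-bound $\|\phi_j^{\mathbf{a}_i}\|_{L^\infty} \leq \tilde{C} \lambda_j(\mathbb{X}_i)^{N/4}$ together with the Weyl-type lower bound $\lambda_j(\mathbb{X}_i) \geq \tilde{C}^{-1} j^{2/N}$, yielding a termwise majorant of the form $C'(K,N,d,v,t_0) \lambda_j^{N/2}\, e^{-\lambda_j t_0^{-1}} \lesssim e^{-c j^{2/N}}$ whose sum from $j \geq N_0$ is arbitrarily small. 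Since $\tilde{C}$ depends only on $K, N, d$ and $t_i$ stays in a compact subinterval of $(0, \infty)$, these bounds are genuinely uniform in $i$.

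The Hausdorff convergence of images then follows from the uniform convergence combined with the $\epsilon_i$-density of $f_i(X_i)$ in $X$: the inclusion $I_{t_i}^{\mathbf{a}_i}(X_i) \subset B_\eta(I_t^{\mathbf{a}}(X))$ is immediate, while the reverse inclusion uses that $I_t^{\mathbf{a}}$ is Lipschitz (by the equi-Lipschitz property of $\Phi_t^X$ recalled at the start of the section) so that each $I_t^{\mathbf{a}}(x)$ is approximated by $I_{t_i}^{\mathbf{a}_i}(x_i)$ for $x_i \in X_i$ chosen with $\dist_X(f_i(x_i), x) \leq \epsilon_i$. The main obstacle in the whole argument is making the tail estimate uniform in $i$; this is where the uniform class $\mathcal{M}(K,N,d,v)$ is decisive, as the constant $\tilde{C}$ in (\ref{eq:eigenfunction}) depends only on $K, N, d$ and not on the individual spaces $\mathbb{X}_i$.
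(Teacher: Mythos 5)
The paper does not prove this theorem itself; it cites \cite[Theorem 5.19]{AHPT}, and your argument — splitting the $\ell^2$-sum at a large index $N_0$, handling the head via Theorem \ref{prop:comp} and the uniform convergence of finitely many eigenfunctions, and killing the tail uniformly in $i$ via the bounds (\ref{eq:eigenfunction}) with $\lambda_j \geq \tilde{C}^{-1}j^{2/N}$ and $t_i$ bounded away from $0$ — is essentially the proof given there. The argument is correct, including the deduction of Hausdorff convergence from uniform convergence plus the $\epsilon_i$-density of $f_i(X_i)$ and the Lipschitz bound on $\Phi_t^X$.
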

\subsection{Spectral approximation}
The purpose of this subsection is to construct mGH approximations via spectral embeddings $I^{\mathbf{a}}_t$. 

Let us fix two compact $\RCD(K, N)$ spaces $\mathbb{X}, \mathbb{Y}$ for some $K \in \mathbb{R}$ and some $N \in [1, \infty)$.  Fix $\epsilon, t\in (0, \infty)$ and spectral data $\mathbf{a}, \mathbf{b}$ of $\mathbb{X}, \mathbb{Y}$, respectively. 
Firstly we define a spectral approximation which is a spectral analogue of Definition \ref{defmgh}.
\begin{definition}[Spectral approximation]
We say that a Borel measurable map $f:X \to Y$ is an \textit{$(\epsilon ; \mathbf{a}, \mathbf{b}, t)$-spectral weak approximation} if 
\begin{equation}
\dist_{\ell^2}\left( I^{\mathbf{a}}_{t}(x), I^{\mathbf{b}}_{t}(f(x))\right)<\epsilon,\quad \forall x \in X
\end{equation}
holds. Moreover $f$ is an \textit{$(\epsilon ; \mathbf{a}, \mathbf{b}, t)$-spectral approximation} if it is an $(\epsilon ; \mathbf{a}, \mathbf{b}, t)$-spectral weak approximation and it holds that for any $y \in Y$, there exists $x \in X$ such that 
\begin{equation}\label{eq:suj}
\dist_{\ell^2}\left( I^{\mathbf{b}}_{t}(f(x)), I^{\mathbf{b}}_{t}(y)\right)<\epsilon.
\end{equation}
holds.
\end{definition}
Let us give fundamental properties on spectral (weak) approximations, including the existence. In order to simplify our notations, we use a standard notation in this topic;  $\Psi= \Psi(\cdot; K, N, d, v, t_0)$ denotes a positive function on $(0, \infty)$ depending only on $K, N, d, v$ and $t_0$ such that
\begin{equation}
\lim_{\epsilon\to 0} \Psi(\epsilon; K, N, d, v, t_0)=0
\end{equation} 
holds.
\begin{proposition}\label{prop:rela}
We have the following.
\begin{enumerate}
\item Let $\mathbb{Z}$ be a compact $\RCD(K, N)$ space. If $f:X \to Y$ is an $(\epsilon; \mathbf{a}, \mathbf{b}, t)$-spectral (or weak, respectively) approximation and $g:Y \to Z$ is a $(\delta; \mathbf{b}, \mathbf{c})$-spectral (or weak, respectively) approximation for some spectral datum $\mathbf{c}$ of $\mathbb{Z}$, then $g \circ f$ is an $(\epsilon+\delta; \mathbf{a}, \mathbf{c})$-spectral (or weak, respectively) approximation.
\item If there exists an $(\epsilon; \mathbf{a}, \mathbf{b}, t)$-spectral (or weak, respectively)  approximation from $X$ to $Y$, then 
\begin{equation}\label{submap}
\dist_{\ell^2}^{\mathrm{H}}\left(I^{\mathbf{a}}_t(X), I_t^{\mathbf{b}}(Y)\right)< 5\epsilon \quad \left(\text{or}\,\,\,I^{\mathbf{a}}_t(X) \subset B_{5\epsilon}\left(I_t^{\mathbf{b}}(Y)\right),\,\,\,\text{respectively}\right)
\end{equation}
is satisfied, where $B_{\epsilon}$ denotes the $\epsilon$-open neighborhood.
\item If 
\begin{equation}
\dist_{\ell^2}^{\mathrm{H}}\left(I^{\mathbf{a}}_t(X), I_t^{\mathbf{b}}(Y)\right)< \epsilon \quad \left(\text{or}\,\,\,I^{\mathbf{a}}_t(X) \subset B_{\epsilon}\left(I_t^{\mathbf{b}}(Y)\right),\,\,\,\text{respectively}\right)
\end{equation}
holds, then there exists a $(5\epsilon; \mathbf{a}, \mathbf{b}, t)$-spectral (weak, respectively) approximation from $X$ to $Y$.
\item{(From mGH to spectral approximation)} If a map $f:X \to Y$ is an $\epsilon$-mGH approximation, then 
 $f$ is a $(\Psi;\mathbf{a}, \mathbf{b}, t)$-spectral approximation, whenever $\mathbb{X}, \mathbb{Y} \in \mathcal{M}(K, N, d, v)$ and  $t \in [t_0^{-1}, t_0]$ are satisfied for some positive constants $d, v, t_0 \in [1, \infty)$, where $\Psi=\Psi(\epsilon; K, N, d, v, t_0)$.
\end{enumerate}
\end{proposition}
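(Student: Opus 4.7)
The plan is to treat the four items in turn. Items (1) and (2) follow at once from the triangle inequality in $\ell^2$, item (3) is a measurable selection, and item (4) is the substantive claim, which I would prove by contradiction using the precompactness of $\mathcal{M}(K,N,d,v)$ together with Theorems \ref{prop:comp} and \ref{thm:convi}.

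For (1), given $x \in X$ I insert $I^{\mathbf{b}}_t(f(x))$ between the endpoints and apply the triangle inequality in $\ell^2$; the density condition \eqref{eq:suj} for $g\circ f$ follows by chaining: first apply the density of $g$ to produce $y \in Y$ close to a given $z \in Z$, then the density of $f$ to produce $x \in X$ close to $y$. For (2), the pointwise bound $\dist_{\ell^2}(I^{\mathbf{a}}_t(x), I^{\mathbf{b}}_t(f(x))) < \epsilon$ yields $I^{\mathbf{a}}_t(X) \subset B_\epsilon(I^{\mathbf{b}}_t(Y))$ directly, and \eqref{eq:suj} together with one more triangle inequality gives the reverse inclusion at scale $2\epsilon$, well within the factor $5$ in \eqref{submap}. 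For (3), I would fix a small $\eta > 0$ and apply the Kuratowski--Ryll-Nardzewski selection theorem to the nonempty closed-valued multifunction
\[
x \mapsto \bigl\{ y \in Y : \dist_{\ell^2}(I^{\mathbf{a}}_t(x), I^{\mathbf{b}}_t(y)) \le \epsilon + \eta \bigr\},
\]
whose graph is Borel since $I^{\mathbf{a}}_t, I^{\mathbf{b}}_t$ are continuous and $Y$ is compact; the density condition \eqref{eq:suj} for the resulting Borel $f$ is then inherited from the opposite half of the Hausdorff inclusion, with the modest inflation to $5\epsilon$ absorbing all constants.

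For (4), I argue by contradiction. Suppose the conclusion fails: there exist $\delta > 0$, sequences $\mathbb{X}_i, \mathbb{Y}_i \in \mathcal{M}(K, N, d, v)$, times $t_i \in [t_0^{-1}, t_0]$, spectral datas $\mathbf{a}_i, \mathbf{b}_i$, and $\epsilon_i$-mGH approximations $f_i : X_i \to Y_i$ with $\epsilon_i \to 0$ such that each $f_i$ fails to be a $(\delta; \mathbf{a}_i, \mathbf{b}_i, t_i)$-spectral approximation. By compactness of $\mathcal{M}(K,N,d,v)$, extract a subsequence with $\mathbb{X}_i \stackrel{\mathrm{mGH}}{\to} \mathbb{X}_\infty$, $\mathbb{Y}_i \stackrel{\mathrm{mGH}}{\to} \mathbb{Y}_\infty$, and $t_i \to t_\infty \in [t_0^{-1}, t_0]$. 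The triangle inequality for $\dist_{\mathrm{mGH}}$ combined with $\epsilon_i \to 0$ forces $\mathbb{X}_\infty$ and $\mathbb{Y}_\infty$ to be isomorphic as metric measure spaces, and the $f_i$ become asymptotically the identity under this identification. Theorem \ref{prop:comp} then gives a further subsequence along which $\mathbf{a}_i \to \mathbf{a}_\infty$ and $\mathbf{b}_i \to \mathbf{b}_\infty$, and Theorem \ref{thm:convi} promotes this to the uniform convergence $I^{\mathbf{a}_i}_{t_i} \to I^{\mathbf{a}_\infty}_{t_\infty}$ and $I^{\mathbf{b}_i}_{t_i} \to I^{\mathbf{b}_\infty}_{t_\infty}$ along the convergent sequences of points.

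The main obstacle is that two distinct spectral datas $\mathbf{a}_\infty, \mathbf{b}_\infty$ on the same limit space generally yield different embeddings, since they differ by orthogonal transformations on each (finite-dimensional) eigenspace. I would handle this by modifying each $\mathbf{b}_i$ within the eigenspaces of $\mathbb{Y}_i$ by orthogonal matrices chosen, by compactness of the relevant orthogonal groups, to align $\mathbf{b}_\infty$ with $\mathbf{a}_\infty$ under the identification $\mathbb{X}_\infty \simeq \mathbb{Y}_\infty$; such modifications still produce spectral datas of $\mathbb{Y}_i$ and leave unchanged the $\ell^2$-image of $I^{\mathbf{b}_i}_{t_i}$, so the failure assumed for $f_i$ is preserved. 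After this alignment, for any $x_i \to \bar x$ we have $f_i(x_i) \to \bar x$ and hence $I^{\mathbf{a}_i}_{t_i}(x_i)$ and $I^{\mathbf{b}_i}_{t_i}(f_i(x_i))$ both converge to the common limit $I^{\mathbf{a}_\infty}_{t_\infty}(\bar x) = I^{\mathbf{b}_\infty}_{t_\infty}(\bar x)$, contradicting $\dist_{\ell^2}(I^{\mathbf{a}_i}_{t_i}(x_i), I^{\mathbf{b}_i}_{t_i}(f_i(x_i))) \ge \delta$; the density half of \eqref{eq:suj} follows from the same passage to the limit applied to the surjectivity-up-to-$\epsilon_i$ of the mGH approximations.
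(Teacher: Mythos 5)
Your handling of items (1)--(3) is correct and matches the paper, which simply declares them direct consequences of the definitions; the measurable-selection argument in (3) is a reasonable way to secure the Borel measurability that the definition demands and that the paper leaves implicit. For item (4) your skeleton --- contradiction, compactness of $\mathcal{M}(K,N,d,v)$, convergence of spectral datas via Theorem \ref{prop:comp}, uniform convergence of the embeddings via Theorem \ref{thm:convi}, and uniform convergence of the $f_i$ to a limit isomorphism $f$ --- is exactly the paper's argument. You have also correctly isolated the one genuinely delicate point: for arbitrary limit spectral datas there is no reason to have $I^{\mathbf{a}_\infty}_{t_\infty}(x)=I^{\mathbf{b}_\infty}_{t_\infty}(f(x))$ when some eigenvalue has multiplicity at least two. (The paper's own proof passes over this by asserting that the limit map satisfies \eqref{condi}.)

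However, your proposed repair does not close the gap. Replacing $\mathbf{b}_i$ by $O_i\mathbf{b}_i$ for a block-diagonal orthogonal $O_i$ does \emph{not} leave the embedding unchanged: one has $I^{O_i\mathbf{b}_i}_{t_i}=O_i\circ I^{\mathbf{b}_i}_{t_i}$ as maps into $\ell^2$, so both the image and, crucially, the pointwise quantity $\dist_{\ell^2}\bigl(I^{\mathbf{a}_i}_{t_i}(x),I^{\mathbf{b}_i}_{t_i}(f_i(x))\bigr)$ change under the re-alignment. Hence the assumed failure of $f_i$ to be a $(\tau;\mathbf{a}_i,\mathbf{b}_i,t_i)$-spectral approximation is not preserved, and the contradiction evaporates. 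Indeed, no re-alignment can rescue the statement with the quantifiers read literally: take $\mathbb{X}_i=\mathbb{Y}_i=\mathbb{X}$ fixed with $\nu_1(\mathbb{X})\ge 2$, $f_i=\mathrm{id}$ (a $0$-mGH approximation), $\mathbf{a}_i=\mathbf{a}$ fixed, and $\mathbf{b}_i=\mathbf{b}$ a nontrivial rotation of $\mathbf{a}$ inside the first eigenspace; since the linear span of $I^{\mathbf{a}}_t(X)$ is dense, $\sup_x\dist_{\ell^2}(I^{\mathbf{a}}_t(x),I^{\mathbf{b}}_t(x))>0$, so $\mathrm{id}$ is not a $(\delta;\mathbf{a},\mathbf{b},t)$-spectral approximation for small $\delta$. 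The conclusion of (4) can therefore only hold for a suitable \emph{choice} of spectral datas, and a correct proof must either restate the claim with an existential quantifier over $\mathbf{b}$ or carry the orthogonal conjugation explicitly through the limit; note also that eigenvalues of $\mathbb{Y}_i$ may merge in the limit, so the aligning matrices you need act on eigenspaces of $\mathbb{Y}_\infty$ and need not preserve the eigenspaces of $\mathbb{Y}_i$, i.e.\ $O_i\mathbf{b}_i$ need not even be a spectral data of $\mathbb{Y}_i$. As written, the final step of your argument is not valid.
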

\begin{proof}
Since (1), (2) and (3) are direct consequences of their definitions, let us give only a proof of (4) by contradiction.
Assume that the assertion is not satisfied. Then there exist a positive number $\tau \in (0, 1]$ and sequences of;
\begin{enumerate}
\item  compact $\RCD(K, N)$ spaces $\mathbb{X}_i$ and $\mathbb{Y}_i$ whose diameters and total measures are bounded away from $0$ and $\infty$;
\item positive numbers $\epsilon_i \to 0^+$;
\item positive numbers $t_i \in (0, \infty)$ which are bounded away from $0$ and $\infty$;
\item spectral data $\mathbf{a}_i, \mathbf{b}_i$ of $\mathbb{X}_i, \mathbb{Y}_i$, respectively, and
\item $\epsilon_i$-mGH approximations $f_i:X_i \to Y_i$ 
\end{enumerate}
such that $f_i$ is not a $(\tau;\mathbf{a}_i, \mathbf{b}_i, t_i)$-spectral approximation. With no loss of generality we can assume that $t_i \to t$ for some $t \in (0, \infty)$. 
Theorem \ref{prop:comp} shows that after passing to a subsequence, we can assume that 
\begin{equation}\label{1000}
\mathbb{X}_i \xrightarrow[g_i]{\mathrm{mGH}} \mathbb{X},\quad \mathbb{Y}_i\xrightarrow[h_i]{\mathrm{mGH}} \mathbb{Y}
\end{equation} 
hold for some compact $\RCD(K, N)$ spaces $\mathbb{X}, \mathbb{Y}$ and that $\mathbf{a}_i, \mathbf{b}_i\to \mathbf{a}, \mathbf{b}$ for some spectral data $\mathbf{a}, \mathbf{b}$ of $\mathbb{X}, \mathbb{Y}$, respectively, with respect to (\ref{1000}).
By the very definition of the mGH approximation, Definition \ref{defmgh}, with no loss of generality we can also assume that $f_i$ converges uniformly to an isometry $f:X \to Y$ as metric measured spaces with respect to (\ref{1000}), in the sense
\begin{equation}\label{snsyshsvsgs}
\sup_{x_i \in X_i}\dist_{Y}\left(h_i\circ f_i(x_i), f\circ g_i(x_i)\right) \to 0,\quad \text{as $i \to \infty$.}
\end{equation}
Since $f$ satisfies (\ref{condi}), Theorem \ref{thm:convi} allows us to conclude that $f_i$ is a $(\delta_i;\mathbf{a}_i, \mathbf{b}_i, t_i)$-spectral approximation for some $\delta_i \to 0^+$ because of the uniform convergence of $f_i$ to $f$. This is a contradiction.
\end{proof}
The next theorem gives a quantitative version of Theorem \ref{thmdeg}. This together with Proposition \ref{prop:rela} proves the existence and the uniqueness of a canonical approximation for given spectral data stated in the abstract.
\begin{theorem}\label{from}
Assume that $\mathbb{X}, \mathbb{Y} \in \mathcal{M}(K, N, d, v)$ and $t \in [t_0^{-1}, t_0]$ are satisfied for some positive constants $d, v, t_0 \in [1, \infty)$.
Then we have the following.
\begin{enumerate}
\item{(From spectral weak approximation to mGH)} Let $f:X \to Y$ be an $(\epsilon; \mathbf{a}, \mathbf{b}, t)$-spectral weak approximation. If $|\meas_X(X)-\meas_Y(Y)|\le \epsilon$ holds, then $f$ is a $\Psi(\epsilon; K, N, d, v, t_0)$-mGH approximation.  
\item{(Uniqueness)} Let $f, g:X \to Y$ be $(\epsilon;\mathbf{a}, \mathbf{b}, t)$-spectral weak approximations. Then 
\begin{equation}
\sup_{x \in X}\dist_Y\left(f(x), g(x)\right) \le \Psi(\epsilon; K, N, d, v, t_0).
\end{equation}
\end{enumerate}
\end{theorem}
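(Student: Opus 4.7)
The plan is to prove both parts by the same compactness-and-contradiction scheme, with the rigidity result Theorem \ref{thmdeg} as the input at the limit.

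For part (1), assume the conclusion fails. Then there exist $\tau \in (0,1]$, and sequences of spaces $\mathbb{X}_i, \mathbb{Y}_i \in \mathcal{M}(K,N,d,v)$, numbers $t_i \in [t_0^{-1}, t_0]$, spectral datas $\mathbf{a}_i, \mathbf{b}_i$ of $\mathbb{X}_i, \mathbb{Y}_i$, and $(\epsilon_i;\mathbf{a}_i,\mathbf{b}_i,t_i)$-spectral weak approximations $f_i:X_i \to Y_i$ with $\epsilon_i \to 0^+$ and $|\meas_{X_i}(X_i)-\meas_{Y_i}(Y_i)| \le \epsilon_i$, yet $f_i$ is not a $\tau$-mGH approximation. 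By compactness of $\mathcal{M}(K,N,d,v)$ and Theorem \ref{prop:comp}, we may pass to a subsequence so that $\mathbb{X}_i \stackrel{\mathrm{mGH}}{\to} \mathbb{X}$, $\mathbb{Y}_i \stackrel{\mathrm{mGH}}{\to} \mathbb{Y}$ in $\mathcal{M}(K,N,d,v)$, $t_i \to t \in [t_0^{-1}, t_0]$, and $\mathbf{a}_i \to \mathbf{a}$, $\mathbf{b}_i \to \mathbf{b}$ for some spectral datas $\mathbf{a}, \mathbf{b}$ of $\mathbb{X}, \mathbb{Y}$; moreover $\meas_X(X) = \meas_Y(Y)$.

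The key step is to extract a continuous limit of the merely Borel-measurable $f_i$. Fix $x \in X$ and any sequence $x_i \to x$ in the mGH sense. Then $I_{t_i}^{\mathbf{a}_i}(x_i) \to I_t^{\mathbf{a}}(x)$ in $\ell^2$ by Theorem \ref{thm:convi}, and $\dist_{\ell^2}(I_{t_i}^{\mathbf{a}_i}(x_i), I_{t_i}^{\mathbf{b}_i}(f_i(x_i))) < \epsilon_i \to 0$ by the spectral weak approximation condition. After passing to a further subsequence, $f_i(x_i) \to y \in Y$ by compactness, and another application of Theorem \ref{thm:convi} gives $I_t^{\mathbf{b}}(y) = I_t^{\mathbf{a}}(x)$. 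Since $I_t^{\mathbf{b}}$ is a topological embedding, $y$ is uniquely determined by $x$; hence the full sequence $f_i(x_i)$ converges, and we obtain a continuous limit map $f:X \to Y$ given by $f(x) = (I_t^{\mathbf{b}})^{-1}(I_t^{\mathbf{a}}(x))$, verifying the hypothesis of Theorem \ref{thmdeg}. That theorem yields that $f$ is an isomorphism of metric measure spaces up to a multiplicative constant $c = \meas_X(X)/\meas_Y(Y)$ on $\meas_Y$; since $\meas_X(X) = \meas_Y(Y)$, we get $c = 1$, so $f$ is a genuine isomorphism.

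The continuous convergence $f_i(x_i) \to f(x)$ (whenever $x_i \to x$), combined with the fact that $f$ is an isometric measure-preserving bijection, implies that $f_i$ is a $\delta_i$-mGH approximation with $\delta_i \to 0^+$: the distances $\dist_{Y_i}(f_i(x_i), f_i(x_i'))$ converge to $\dist_Y(f(x), f(x')) = \dist_X(x, x')$; $\delta_i$-surjectivity follows from surjectivity of $f$ and compactness; and weak convergence of the normalized pushforwards $(f_i)_\sharp\meas_{X_i}/\meas_{X_i}(X_i)$ to $\meas_Y/\meas_Y(Y)$, together with $|\meas_{X_i}(X_i)-\meas_{Y_i}(Y_i)| \to 0$, handles the Wasserstein term. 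This contradicts $\delta_i \ge \tau$ and proves (1). Part (2) follows from the same scheme applied to pairs of sequences $f_i, g_i$: the two resulting limit maps $f, g:X \to Y$ both satisfy $I_t^{\mathbf{a}}(x) = I_t^{\mathbf{b}}(f(x)) = I_t^{\mathbf{b}}(g(x))$, so $f \equiv g$ by injectivity of $I_t^{\mathbf{b}}$, and continuous convergence forces $\sup_{x} \dist_{Y_i}(f_i(x), g_i(x)) \to 0$, contradicting the presumed $\tau$-lower bound.

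The main obstacle is the first step: extracting a usable limit from the Borel-only $f_i$, for which no equicontinuity is available a priori. The remedy is precisely the injectivity of the spectral embedding $I_t^{\mathbf{b}}$, which pins down every subsequential limit of $\{f_i(x_i)\}$ uniquely and so upgrades subsequential to full convergence. Once this continuous convergence is in hand, Theorem \ref{thmdeg} does the heavy lifting and the remaining verifications are standard within the mGH framework for $\RCD$ spaces.
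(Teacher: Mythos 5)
Your proposal is correct and follows essentially the same route as the paper: a compactness-and-contradiction argument in $\mathcal{M}(K,N,d,v)$, extraction of a limit map $f=(I_t^{\mathbf{b}})^{-1}\circ I_t^{\mathbf{a}}$ whose well-definedness rests on the injectivity of the spectral embedding and the convergence results of Theorem \ref{thm:convi}, followed by Theorem \ref{thmdeg} and the upgrade of pointwise/continuous convergence of $f_i$ to the mGH-approximation properties. The treatment of part (2) via the independence of the limit map from the choice of approximating sequence also matches the paper's argument.
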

\begin{proof}
Firstly let us prove (1) by contradiction.
If the assertion is not satisfied, then as done in the first part of the proof of (4) of Proposition \ref{prop:rela}, there exist a positive number $\tau \in (0,1]$ and sequences of; 
\begin{enumerate}
\item compact $\RCD(K, N)$ spaces
\begin{equation}\label{100}
\mathbb{X}_i \xrightarrow[g_i]{\mathrm{mGH}} \mathbb{X},\quad \mathbb{Y}_i\xrightarrow[h_i]{\mathrm{mGH}} \mathbb{Y}
\end{equation} 
with $|\meas_{X_i}(X_i)-\meas_{Y_i}(Y_i)| \to 0$;
\item positive numbers $\epsilon_i \to 0^+$;
\item positive numbers $t_i \to t$ in $(0, \infty)$;
\item spectral data $\mathbf{a}_i, \mathbf{b}_i\to \mathbf{a}, \mathbf{b}$, respectively with respect to (\ref{100}), and 
\item $(\epsilon_i;\mathbf{a}_i, \mathbf{b}_i, t_i)$-spectral weak approximations $f_i:X_i \to Y_i$
\end{enumerate}
such that each $f_i$ is not a $\tau$-mGH-approximation. 

Let us define a map $f:X \to Y$ as follows;
for any $x \in X$, take a sequence $x_i \in X_i$ converging to $x$ with respect to (\ref{100}), and find a sequence $y_i \in Y_i$ with
\begin{equation}\label{hsihasha8ssn}
\dist_{\ell^2}\left( I^{\mathbf{a}_i}_{t_i}(x_i), I^{\mathbf{b}_i}_{t_i}(y_i)\right) \to 0,
\end{equation}
where such $y_i$ can be found by, for instance, $y_i:=f_i(x_i)$.
Moreover find a sequence $y(i) \in Y$ with $\dist_Y(h_i(y_i), y(i)) \to 0$. Then 
\begin{equation}
f(x):=\lim_{i\to \infty}y(i).
\end{equation}
In order to check that this is well-defined, let us take $\tilde x_i, \tilde y_i, \tilde y (i)$ as above instead of $x_i, y_i, y(i)$, respectively. With no loss of generality, after passing to a subsequence, we can assume that $y(i), \tilde y(i) \to y, \tilde y$ hold for some $y, \tilde y \in Y$, respectively. Then it is enough to check that $y=\tilde y$.

By the construction done in Theorem \ref{thm:convi}, we have
\begin{align}
\dist_{\ell^2}\left(I^{\mathbf{b}}_t(y), I^{\mathbf{b}}_t(\tilde y)\right)=\lim_{i \to \infty}\dist_{\ell^2}\left(I^{\mathbf{a}_i}_{t_i}(x_i), I^{\mathbf{a}_i}_{t_i}(\tilde x_i)\right) \le C \lim_{i \to \infty}\dist_{X_i}(x_i, \tilde x_i) =0, 
\end{align}
where we used a $C$-Lipschitz continuity of $\Phi_{t_i}^{X_i}$ (coming from (\ref{eq:equi lip})) stated in the beginning of subsection \ref{88shsnsn}, where $C$ is independent of $i$. Thus $y=\tilde y$, namely $f$ is well-defined.
Moreover, by the construction of $f$ (in particular (\ref{hsihasha8ssn})), we see that (\ref{condi}) holds, thus $f$ gives an isometry as metric measured spaces because of Theorem \ref{thmdeg} with $\meas_{X}(X)=\meas_Y(Y)$. 

Let us prove  by contradiction that
$f_i$ uniformly converge to $f$ in the sense (\ref{snsyshsvsgs}). 
If $f_i$ does not converge uniformly to $f$, then there exist a positive constant $\tilde \tau \in (0, 1)$ and sequences of $x_i, \tilde x_i \in X_i$ such that $\dist_{X_i}(x_i, \tilde x_i) \to 0$ and $\dist_{Y_i}(f_i(x_i), f_i(\tilde x_i)) \ge \tilde \tau$ are satisfied. With no loss of generality we can assume that $x_i, \tilde x_i \to x$ for some $x \in X$. Then recalling the definition of $f$, we have 
\begin{equation}
\tilde \tau \le \dist_{Y_i}\left(f_i(x_i), f_i(\tilde x_i)\right) \to \dist_Y \left(f(x), f(x)\right)=0
\end{equation}
which is a contradiction.

In particular $f_i$ gives an $\delta_i$-GH approximation for some $\delta_i \to 0^+$.
Note that the uniform convergence of $f_i$ implies that $(f_i)_{\sharp}\meas_{X_i}(B_r(w_i)) \to f_{\sharp}\meas_X(B_r(w))=\meas_Y(B_r(w))$ for any convergent sequence $w_i \in Y_i \to w \in Y$ with respect to (\ref{100}). From this, 
we can easily get the weak convergence of $(f_i)_{\sharp}\meas_{X_i}$ to $\meas_Y$ with respect to (\ref{100}) (see for instance \cite[Proposition 3.17]{Honda2}).
Thus it holds from \cite[Proposition 4.1 and Corollary 4.3]{LottVillani} that
\begin{equation}
\mathsf{W}^2_{Y_i}\left(\frac{(f_i)_{\sharp}\meas_{X_i}}{\meas_{X_i}(X_i)}, \frac{\meas_{Y_i}}{\meas_{Y_i}(Y_i)}\right) \to \mathsf{W}_Y^2\left(\frac{\meas_Y}{\meas_Y(Y)}, \frac{\meas_Y}{\meas_Y(Y)}\right)=0,
\end{equation}
which proves that $f_i$ gives a $\tilde \delta_i$-mGH-approximation for some $\tilde \delta_i \to 0^+$. This is a contradiction. Thus we have (1).

For (2), we adopt contradiction as the proof again. Recalling that the definition of the limit map $f$ above does not depend on $f_i$, since the proof is similar to the above,  we omit the proof. 
\end{proof}

\subsection{Spectral distance}
Let us fix two compact $\RCD(K, N)$ spaces $\mathbb{X}, \mathbb{Y}$  for some $K \in \mathbb{R}$ and some $N \in [1, \infty)$, and fix $t \in (0, \infty)$.
We introduce a spectral distance, based on the same ideas as \cite{BerardBessonGallot}.
\begin{definition}[Spectral pseudo distance and its variants]\label{as9as9jasnsh}
Let us put
\begin{equation}\label{eq;specinf}
\underline{\dist}_{\mathrm{Spec}}^t\left(\mathbb{X}, \mathbb{Y}\right):=\inf_{\mathbf{a}, \mathbf{b}} \left\{ \dist_{\ell^2}^{\mathrm{H}}\left( I^{\mathbf{a}}_{t}(X), I^{\mathbf{b}}_{t}(Y)\right)\right\},
\end{equation}
\begin{equation}
\stackrel{\to}{\dist^t}_{\mathrm{Spec}}\left(\mathbb{X}, \mathbb{Y}\right):=\sup_{\mathbf{a}} \inf_{\mathbf{b}} \left\{ \dist_{\ell^2}^{\mathrm{H}}\left( I^{\mathbf{a}}_{t}(X), I^{\mathbf{b}}_{t}(Y)\right)\right\},
\end{equation}
and 
\begin{equation}
\dist_{\mathrm{Spec}}^t\left(\mathbb{X}, \mathbb{Y}\right):=\max \left\{ \stackrel{\to}{\dist^t}_{\mathrm{Spec}}\left(\mathbb{X}, \mathbb{Y}\right), \stackrel{\leftarrow}{\dist^t}_{\mathrm{Spec}}\left(\mathbb{X}, \mathbb{Y}\right)\right\},
\end{equation}
where $\mathbf{a}, \mathbf{b}$ above run over all spectral data of $\mathbb{X}, \mathbb{Y}$, respectively, and 
\begin{equation}
\stackrel{\leftarrow}{\dist^t}_{\mathrm{Spec}}\left(\mathbb{X}, \mathbb{Y}\right):= \sup_{\mathbf{b}} \inf_{\mathbf{a}}  \left\{ \dist_{\ell^2}^{\mathrm{H}}\left( I^{\mathbf{a}}_{t}(X), I^{\mathbf{b}}_{t}(Y)\right)\right\}=\stackrel{\to}{\dist^t}_{\mathrm{Spec}}(\mathbb{Y}, \mathbb{X}).
\end{equation}
\end{definition}
It is trivial by definition that
\begin{equation}\label{eq:ineq}
\dist_{\mathrm{Spec}}^t \ge \max \left\{ \stackrel{\to}{\dist^t}_{\mathrm{Spec}}, \stackrel{\leftarrow}{\dist^t}_{\mathrm{Spec}}\right\} \ge \min \left\{ \stackrel{\to}{\dist^t}_{\mathrm{Spec}}, \stackrel{\leftarrow}{\dist^t}_{\mathrm{Spec}}\right\} \ge \underline{\dist}_{\mathrm{Spec}}
\end{equation}
holds and that the infimum in (\ref{eq;specinf}) can be replaced by the minimum, namely
\begin{equation}
\underline{\dist}_{\mathrm{Spec}}^t\left(\mathbb{X}, \mathbb{Y}\right)=\dist_{\ell^2}^H\left( I^{\mathbf{a}}_{t}(X), I^{\mathbf{b}}_t(Y)\right)
\end{equation}
for some spectral data $\mathbf{a}, \mathbf{b}$ of $\mathbb{X}, \mathbb{Y}$, respectively because of applying Theorem \ref{prop:comp} to a fixed space.
As a corollary of this observation with Theorem \ref{thmnond}, we have the following. 
\begin{corollary}\label{cordeg}
$\mathbb{X}$ is isometric to $\mathbb{Y}$ as metric measured spaces, up to a multiplication of $\meas_Y$ by a positive constant, if and only if $\underline{\dist}_{\mathrm{Spec}}^t\left(\mathbb{X}, \mathbb{Y}\right)=0$ holds. In particular we have Corollary \ref{cormain}.
\end{corollary}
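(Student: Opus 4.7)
The plan is to reduce both directions of the biconditional to Theorem~\ref{thmnond}, exploiting the observation (noted just before the statement) that the infimum defining $\underline{\dist}_{\mathrm{Spec}}^t(\mathbb{X}, \mathbb{Y})$ is in fact attained by some pair of spectral datas; this attainment follows from the compactness result Theorem~\ref{prop:comp} applied to the constant sequences $\mathbb{X}_i \equiv \mathbb{X}$ and $\mathbb{Y}_i \equiv \mathbb{Y}$, combined with the continuity of $\dist_{\ell^2}^{\mathrm{H}}(I_t^{\mathbf{a}}(X), I_t^{\mathbf{b}}(Y))$ under spectral convergence provided by Theorem~\ref{thm:convi}.

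For the forward implication, suppose $f \colon X \to Y$ is an isometry of metric spaces with $f_\sharp \meas_X = c\meas_Y$ for some $c > 0$, so that in particular $\meas_X(X) = c\,\meas_Y(Y)$. Choose any spectral data $\mathbf{b}$ of $\mathbb{Y}$ and set $\phi_i^{\mathbf{a}} := c^{-1/2}(\phi_i^{\mathbf{b}} \circ f)$. A direct change of variable under $f$ verifies that $\{\phi_i^{\mathbf{a}}\}_i$ is an $L^2(X, \meas_X)$-orthonormal family, and since the Laplacian is unaffected by rescaling the reference measure by a positive constant while isometries of metric measure spaces commute with it, each $\phi_i^{\mathbf{a}}$ is an eigenfunction of $-\Delta_X$ with eigenvalue $\lambda_i(\mathbb{Y})$. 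Hence $\lambda_i(\mathbb{X}) = \lambda_i(\mathbb{Y})$ for all $i$ and $\mathbf{a} := (\phi_i^{\mathbf{a}})_i$ is a spectral data of $\mathbb{X}$. Substituting into the definition of $I_t$ and using $\meas_X(X) = c\,\meas_Y(Y)$ to cancel the normalization factor $c^{-1/2}$ yields $I_t^{\mathbf{a}}(x) = I_t^{\mathbf{b}}(f(x))$ for every $x \in X$. In particular $I_t^{\mathbf{a}}(X) = I_t^{\mathbf{b}}(Y)$ and $\underline{\dist}_{\mathrm{Spec}}^t(\mathbb{X}, \mathbb{Y}) = 0$.

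For the converse, assume $\underline{\dist}_{\mathrm{Spec}}^t(\mathbb{X}, \mathbb{Y}) = 0$ and pick spectral datas $\mathbf{a}, \mathbf{b}$ realizing the (attained) infimum. Both images $I_t^{\mathbf{a}}(X)$ and $I_t^{\mathbf{b}}(Y)$ are compact subsets of $\ell^2$, since $X, Y$ are compact and the maps $I_t^{\mathbf{a}}$, $I_t^{\mathbf{b}}$ are continuous (in fact Lipschitz, by the gradient bound (\ref{eq:equi lip})). Hausdorff distance zero between two compact sets forces set equality, so $I_t^{\mathbf{a}}(X) = I_t^{\mathbf{b}}(Y)$; in particular the inclusion (\ref{subset}) holds, and Theorem~\ref{thmnond} produces an isomorphism of $\mathbb{X}$ and $\mathbb{Y}$ as metric measure spaces up to a positive multiplicative constant on $\meas_Y$.

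Finally, the ``in particular'' claim for Corollary~\ref{cormain} is immediate from the chain (\ref{eq:ineq}), which gives $\dist_{\mathrm{Spec}}^t \ge \underline{\dist}_{\mathrm{Spec}}^t$: the assumption $\dist_{\mathrm{Spec}}^t(\mathbb{X}, \mathbb{Y}) = 0$ produces, by what has just been proved, an isomorphism $\mathbb{X} \simeq (Y, \dist_Y, c\meas_Y)$ for some $c > 0$, and the additional hypothesis $\meas_X(X) = \meas_Y(Y)$ then forces $c = 1$. No real obstacle is anticipated, since all the substantive rigidity has already been supplied by Theorem~\ref{thmnond}; this corollary merely packages that rigidity together with the compactness-based attainment of the infimum.
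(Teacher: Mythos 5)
Your proposal is correct and follows essentially the same route as the paper: the attainment of the infimum in \eqref{eq;specinf} via Theorem \ref{prop:comp} (plus Theorem \ref{thm:convi}), reduction of the nontrivial direction to Theorem \ref{thmnond} through set equality of the compact images, and the easy direction via pulling back a spectral data along the isomorphism as in \eqref{eq:amb}. The paper leaves most of these details implicit, but your filled-in computations (the $c^{-1/2}$ normalization and the cancellation in $I_t^{\mathbf{a}}=I_t^{\mathbf{b}}\circ f$) are exactly what is intended.
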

\begin{proposition}
For a compact $\RCD(K, N)$ space $\mathbb{Z}$, we have
\begin{align}\label{1}
\stackrel{\to}{\dist^t}_{\mathrm{Spec}}\left(\mathbb{X}, \mathbb{Y}\right) \le \stackrel{\to}{\dist^t}_{\mathrm{Spec}}\left(\mathbb{X}, \mathbb{Z}\right) +\stackrel{\to}{\dist^t}_{\mathrm{Spec}}\left(\mathbb{Z}, \mathbb{Y}\right)
\end{align}
and 
\begin{align}\label{2}
&\stackrel{\leftarrow}{\dist^t}_{\mathrm{Spec}}\left(\mathbb{X}, \mathbb{Y}\right)\le  \stackrel{\leftarrow}{\dist^t}_{\mathrm{Spec}}\left(\mathbb{X}, \mathbb{Z}\right) +\stackrel{\leftarrow}{\dist^t}_{\mathrm{Spec}}\left(\mathbb{Z}, \mathbb{Y}\right).
\end{align}
In particular, combining the above with Corollary \ref{cordeg},  we see that $\dist_{\mathrm{Spec}}^t$ defines a distance on the set of all isometry classes as metric measured spaces of compact $\RCD(K, N)$ spaces with total measures $1$.
\end{proposition}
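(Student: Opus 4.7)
The plan is to derive both triangle inequalities by a routine $\eta$-approximation using the triangle inequality for the Hausdorff distance on nonempty subsets of $\ell^2$, and then to read off the concluding distance claim by combining these with Corollary \ref{cordeg}.

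For (\ref{1}), I fix $\eta>0$ and an arbitrary spectral data $\mathbf{a}$ of $\mathbb{X}$. By the very definition of $\stackrel{\to}{\dist^t}_{\mathrm{Spec}}(\mathbb{X}, \mathbb{Z})$ as a $\sup_{\mathbf{a}} \inf_{\mathbf{c}}$, there exists a spectral data $\mathbf{c}$ of $\mathbb{Z}$ with
\begin{equation}
\dist_{\ell^2}^{\mathrm{H}}\left( I^{\mathbf{a}}_{t}(X), I^{\mathbf{c}}_{t}(Z)\right)\le \stackrel{\to}{\dist^t}_{\mathrm{Spec}}(\mathbb{X}, \mathbb{Z})+\eta.
\end{equation}
For this $\mathbf{c}$, the definition of $\stackrel{\to}{\dist^t}_{\mathrm{Spec}}(\mathbb{Z}, \mathbb{Y})$ furnishes a spectral data $\mathbf{b}$ of $\mathbb{Y}$ with
\begin{equation}
\dist_{\ell^2}^{\mathrm{H}}\left( I^{\mathbf{c}}_{t}(Z), I^{\mathbf{b}}_{t}(Y)\right)\le \stackrel{\to}{\dist^t}_{\mathrm{Spec}}(\mathbb{Z}, \mathbb{Y})+\eta.
\end{equation}
The triangle inequality for $\dist_{\ell^2}^{\mathrm{H}}$ then bounds $\dist_{\ell^2}^{\mathrm{H}}(I^{\mathbf{a}}_t(X),I^{\mathbf{b}}_t(Y))$ by the sum of the two right-hand sides; taking the infimum over $\mathbf{b}$, the supremum over $\mathbf{a}$, and finally letting $\eta \to 0^+$ yields (\ref{1}). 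Inequality (\ref{2}) follows by applying (\ref{1}) with $\mathbb{X}$ and $\mathbb{Y}$ swapped, via the identity $\stackrel{\leftarrow}{\dist^t}_{\mathrm{Spec}}(\mathbb{X},\mathbb{Y})=\stackrel{\to}{\dist^t}_{\mathrm{Spec}}(\mathbb{Y},\mathbb{X})$ recorded in the definition.

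For the concluding distance claim, I would verify the three axioms on isomorphism classes in turn. The functional $\dist^t_{\mathrm{Spec}}$ is well-defined on isomorphism classes because any isomorphism transports spectral datas to spectral datas while preserving $I^{\mathbf{a}}_t$ pointwisely, which also shows that $\dist^t_{\mathrm{Spec}}(\mathbb{X},\mathbb{Y})=0$ whenever $\mathbb{X}$ and $\mathbb{Y}$ are isomorphic. Symmetry is built into the outer $\max$ in the definition, and the triangle inequality follows by taking the maximum of (\ref{1}) and (\ref{2}). For nondegeneracy, suppose $\dist_{\mathrm{Spec}}^t(\mathbb{X},\mathbb{Y})=0$; then the chain (\ref{eq:ineq}) forces $\underline{\dist}_{\mathrm{Spec}}^t(\mathbb{X},\mathbb{Y})=0$, so Corollary \ref{cordeg} implies that $\mathbb{X}$ is isomorphic to $\mathbb{Y}$ up to a positive scalar multiple of $\meas_Y$, and the normalization $\meas_X(X)=\meas_Y(Y)=1$ forces that scalar to be $1$.

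No step presents a serious obstacle: everything reduces to standard manipulations with sup-inf and the Hausdorff-distance triangle inequality in $\ell^2$, together with the already-established Corollary \ref{cordeg}. The only mild subtlety is that the sup and inf in the definitions of $\stackrel{\to}{\dist^t}_{\mathrm{Spec}}$ and $\stackrel{\leftarrow}{\dist^t}_{\mathrm{Spec}}$ need not be attained for general compact $\RCD$ spaces, which is precisely why I prefer the $\eta$-approximation bookkeeping above to appealing to extremizers (the attainment in $\underline{\dist}_{\mathrm{Spec}}^t$ invoked a few lines earlier relies on the compactness supplied by Theorem \ref{prop:comp} for a fixed ambient space, a device that is not directly available for the nested $\sup_{\mathbf{a}} \inf_{\mathbf{b}}$).
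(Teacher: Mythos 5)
Your proposal is correct and follows essentially the same route as the paper: both arguments reduce to the triangle inequality for $\dist_{\ell^2}^{\mathrm{H}}$ followed by the standard sup--inf manipulation (the paper passes the infima through directly, while you use $\eta$-approximate minimizers, which is an equivalent bookkeeping), and both deduce (\ref{2}) from (\ref{1}) via $\stackrel{\leftarrow}{\dist^t}_{\mathrm{Spec}}(\mathbb{X},\mathbb{Y})=\stackrel{\to}{\dist^t}_{\mathrm{Spec}}(\mathbb{Y},\mathbb{X})$ and the distance axioms from Corollary \ref{cordeg}. No gaps.
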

\begin{proof}
Since (\ref{2}) is a corollary of (\ref{1}), let us prove (\ref{1}) only.
The triangle inequality for the Hausdorff distance yields
\begin{align}
\dist_{\ell^2}^{\mathrm{H}}\left( I^{\mathbf{a}}_{t}(X), I^{\mathbf{b}}_{t}(Y)\right) 
&\le \dist_{\ell^2}^{\mathrm{H}}\left( I^{\mathbf{a}}_{t}(X), I^{\mathbf{c}}_{t}(Z)\right) + \dist_{\ell^2}^{\mathrm{H}}\left( I^{\mathbf{c}}_{t}(Z), I^{\mathbf{b}}_{t}(Y)\right)
\end{align}
for all spectral data $\mathbf{a}, \mathbf{b}, \mathbf{c}$. 

Thus taking the infimum with respect to $\mathbf{b}$ shows
\begin{align}
\inf_{\mathbf{b}} \dist_{\ell^2}^{\mathrm{H}}\left( I^{\mathbf{a}}_{t}(X), I^{\mathbf{b}}_{t}(Y)\right) &\le \dist_{\ell^2}^{\mathrm{H}}\left( I^{\mathbf{a}}_{t}(X), I^{\mathbf{c}}_{t}(Z)\right) +\inf_{\mathbf{b}}\dist_{\ell^2}^{\mathrm{H}}\left( I^{\mathbf{c}}_{t}(Z), I^{\mathbf{b}}_{t}(Y)\right) \nonumber \\
& \le  \dist_{\ell^2}^{\mathrm{H}}\left( I^{\mathbf{a}}_{t}(X), I^{\mathbf{c}}_{t}(Z)\right) +\stackrel{\to}{\dist^t}_{\mathrm{Spec}}\left(\mathbb{Z}, \mathbb{Y}\right).
\end{align}
Moreover taking the infimum with respect to $\mathbf{c}$ and then taking the supremum with respect to $\mathbf{a}$ complete the proof of (\ref{1}). 

\end{proof}

\section{$\dist_{\mathrm{mGH}}$ vs $\dist_{\mathrm{Spec}}^t$}
The main purpose of this section is to provide a proof of Theorem \ref{mainthm2}. The following is a generalization of \cite[(ii) of Theorem 17]{BerardBessonGallot} to the RCD setting.
\begin{theorem}\label{eq:mghspec}
Let 
\begin{equation}
\mathbb{X}_i \stackrel{\mathrm{mGH}}{\to} \mathbb{X} \quad \text{in}\,\,\,\mathcal{M}(K, N, d, v)
\end{equation}
for some $K, N, d, v$. Then for any $t \in (0, \infty)$, after passing to a subsequence, we have
\begin{equation}
\stackrel{\to}{\dist^t}_{\mathrm{Spec}}(\mathbb{X}_i, \mathbb{X}) \to 0.
\end{equation}
In particular $\underline{\dist^t}_{\mathrm{Spec}}(\mathbb{X}_i, \mathbb{X}) \to 0$.
\end{theorem}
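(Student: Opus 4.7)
My plan is to argue by contradiction, relying on the compactness of spectral data along mGH-convergent sequences (Theorem~\ref{prop:comp}) and the continuity of the spectral embeddings $I^{\mathbf{a}}_t$ (Theorem~\ref{thm:convi}). It is enough to show that $\liminf_{i\to\infty}\stackrel{\to}{\dist^t}_{\mathrm{Spec}}(\mathbb{X}_i,\mathbb{X})=0$, since this already produces the required subsequence.

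Suppose to the contrary that there exist $\tau>0$ and (after relabelling a subsequence) spectral data $\mathbf{a}_i$ of $\mathbb{X}_i$ with
\begin{equation*}
\inf_{\mathbf{b}}\dist_{\ell^2}^{\mathrm{H}}\bigl(I^{\mathbf{a}_i}_t(X_i),I^{\mathbf{b}}_t(X)\bigr)\ge \tau/2 \qquad \text{for every } i,
\end{equation*}
where $\mathbf{b}$ ranges over all spectral data of $\mathbb{X}$. Such $\mathbf{a}_i$ can be chosen by the very definition of the supremum in $\stackrel{\to}{\dist^t}_{\mathrm{Spec}}(\mathbb{X}_i,\mathbb{X})\ge\tau$.

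Apply Theorem~\ref{prop:comp} to the sequence $\{\mathbf{a}_i\}_i$: passing to a further subsequence, $\mathbf{a}_i\to\mathbf{a}$ for some spectral data $\mathbf{a}$ of $\mathbb{X}$ with respect to the given mGH-convergence. Theorem~\ref{thm:convi} then gives
\begin{equation*}
\dist_{\ell^2}^{\mathrm{H}}\bigl(I^{\mathbf{a}_i}_t(X_i),I^{\mathbf{a}}_t(X)\bigr)\longrightarrow 0,
\end{equation*}
which bounds the infimum on the left-hand side of the previous display by a quantity going to zero, contradicting the lower bound $\tau/2$. The final assertion concerning $\underline{\dist}_{\mathrm{Spec}}^t$ is then immediate from the chain of inequalities recorded in (\ref{eq:ineq}).

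The proof is essentially a packaging argument: all the genuine content has been placed into Theorem~\ref{prop:comp} (which carries the spectral convergence theory of \cite{GigliMondinoSavare13}) and Theorem~\ref{thm:convi} (which absorbs both the normalization by $\sqrt{\meas_{X_i}(X_i)}\to\sqrt{\meas_X(X)}$, guaranteed by the mGH-convergence in $\mathcal{M}(K,N,d,v)$, and the uniform convergence of the eigenfunction components). Accordingly, I do not anticipate any serious obstacle; the only point requiring care is correctly negating the definition of $\stackrel{\to}{\dist^t}_{\mathrm{Spec}}$ so as to extract a single sequence $\mathbf{a}_i$ of spectral data on which Theorem~\ref{prop:comp} can then be invoked.
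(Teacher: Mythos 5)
Your argument is correct and uses exactly the same two ingredients as the paper (compactness of spectral data, Theorem~\ref{prop:comp}, followed by the convergence of the embeddings, Theorem~\ref{thm:convi}); the only difference is that you phrase it as a contradiction via $\liminf$, whereas the paper directly picks $\mathbf{a}_i$ nearly attaining the supremum and bounds $\stackrel{\to}{\dist^t}_{\mathrm{Spec}}(\mathbb{X}_i,\mathbb{X})$ by a sum tending to zero. This is the same proof in contrapositive clothing.
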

\begin{proof}
Take a sequence of spectral datas $\mathbf{a}_i$ of $\mathbb{X}_i$ with
\begin{equation}
\left| \stackrel{\to}{\dist^t}_{\mathrm{Spec}}(\mathbb{X}_i, \mathbb{X}) - \inf_{\mathbf{c}}\dist^{\mathrm{H}}_{\ell^2}\left(I^{\mathbf{a}_i}_{t}(X_i), I^{\mathbf{c}}_{t}(X)\right) \right| \to 0.
\end{equation}
Thanks to Theorem \ref{prop:comp}, after passing to a subsequence, we have $\mathbf{a}_i\to \mathbf{a}$ for some spectral datum $\mathbf{a}$ of $\mathbb{X}$. 
Then since
\begin{equation}
\dist^{\mathrm{H}}_{\ell^2}\left(I^{\mathbf{a}_i}_{t}(X_i), I^{\mathbf{a}}_{t}(X)\right)+ \left| \stackrel{\to}{\dist^t}_{\mathrm{Spec}}(\mathbb{X}_i, \mathbb{X}) - \inf_{\mathbf{c}}\dist^{\mathrm{H}}_{\ell^2}\left(I^{\mathbf{a}_i}_{t}(X_i), I^{\mathbf{c}}_{t}(X)\right) \right|  \ge \stackrel{\to}{\dist^t}_{\mathrm{Spec}}(\mathbb{X}_i, \mathbb{X})
\end{equation}
and the LHS converge to $0$ because of Theorem \ref{thm:convi}, we conclude.
\end{proof}
\begin{theorem}\label{thmoneimp}
Let $\mathbb{X}_i$ be a sequence of compact $\RCD(K, N)$ spaces in $\mathcal{M}=\mathcal{M}(K, N, d, v)$ for some $K, N, d, v$ and let $\mathbb{X} \in \mathcal{M}$.  If
\begin{equation}
\meas_{X_i}(X_i) \to \meas_X(X)
\end{equation}
and 
\begin{equation}
\dist_{\mathrm{Spec}}^t(\mathbb{X}_i, \mathbb{X}) \to 0
\end{equation}
hold for some $t \in (0, \infty)$, then we have
\begin{equation}
\mathbb{X}_i \stackrel{\mathrm{mGH}}{\to} \mathbb{X}.
\end{equation}
\end{theorem}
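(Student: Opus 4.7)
The plan is to combine the mGH-compactness of $\mathcal{M}(K,N,d,v)$ with Theorem \ref{eq:mghspec} and the nondegeneracy result (Corollary \ref{cordeg}). Since every subsequence of $\{\mathbb{X}_i\}$ has a further mGH-convergent subsequence by compactness of $\mathcal{M}(K,N,d,v)$, it suffices to show that any such subsequential mGH-limit is isomorphic to $\mathbb{X}$. Concretely, suppose $\mathbb{X}_{i_j}\stackrel{\mathrm{mGH}}{\to}\mathbb{Y}$ for some $\mathbb{Y}\in\mathcal{M}(K,N,d,v)$ (note that $\mathbb{Y}$ is not a single point because $\diam(X_{i_j},\dist_{i_j})\ge d^{-1}$ passes to the limit); the goal is to prove $\mathbb{X}\cong\mathbb{Y}$.

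First I would handle the total measures. The mGH convergence $\mathbb{X}_{i_j}\to\mathbb{Y}$ forces $\meas_{X_{i_j}}(X_{i_j})\to\meas_Y(Y)$, while the hypothesis gives $\meas_{X_{i_j}}(X_{i_j})\to\meas_X(X)$. Therefore $\meas_Y(Y)=\meas_X(X)$, which will provide the equality of total masses needed at the end for Corollary \ref{cormain}.

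Next I would propagate the spectral closeness from $\mathbb{X}_{i_j}$ to $\mathbb{Y}$ by applying Theorem \ref{eq:mghspec}: after passing to a further subsequence (still denoted the same for brevity),
\begin{equation}
\stackrel{\to}{\dist^t}_{\mathrm{Spec}}(\mathbb{X}_{i_j},\mathbb{Y})\to 0.
\end{equation}
The hypothesis $\dist_{\mathrm{Spec}}^t(\mathbb{X}_{i_j},\mathbb{X})\to 0$ combined with $\stackrel{\to}{\dist^t}_{\mathrm{Spec}}(\mathbb{X},\mathbb{X}_{i_j})=\stackrel{\leftarrow}{\dist^t}_{\mathrm{Spec}}(\mathbb{X}_{i_j},\mathbb{X})\le\dist_{\mathrm{Spec}}^t(\mathbb{X}_{i_j},\mathbb{X})$ yields $\stackrel{\to}{\dist^t}_{\mathrm{Spec}}(\mathbb{X},\mathbb{X}_{i_j})\to 0$. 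Invoking the triangle inequality (\ref{1}) for the directional spectral distance,
\begin{equation}
\stackrel{\to}{\dist^t}_{\mathrm{Spec}}(\mathbb{X},\mathbb{Y})\le \stackrel{\to}{\dist^t}_{\mathrm{Spec}}(\mathbb{X},\mathbb{X}_{i_j})+\stackrel{\to}{\dist^t}_{\mathrm{Spec}}(\mathbb{X}_{i_j},\mathbb{Y})\longto 0,
\end{equation}
so $\stackrel{\to}{\dist^t}_{\mathrm{Spec}}(\mathbb{X},\mathbb{Y})=0$, hence $\underline{\dist}_{\mathrm{Spec}}^t(\mathbb{X},\mathbb{Y})=0$ by (\ref{eq:ineq}).

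Finally, Corollary \ref{cordeg} (i.e., the nondegeneracy statement of Theorem \ref{thmnond}) together with the already-established equality $\meas_X(X)=\meas_Y(Y)$ implies that $\mathbb{X}$ and $\mathbb{Y}$ are isomorphic as metric measure spaces. Thus $\mathbb{X}_{i_j}\stackrel{\mathrm{mGH}}{\to}\mathbb{X}$. Because every subsequence of $\{\mathbb{X}_i\}$ admits a sub-subsequence mGH-converging to $\mathbb{X}$, the full sequence mGH-converges to $\mathbb{X}$, as desired. The only nonformal step is ensuring the triangle-inequality chain routes through a single sequence of spectral data; this is handled automatically once one works with the directional distances $\stackrel{\to}{\dist^t}_{\mathrm{Spec}}$ and $\stackrel{\leftarrow}{\dist^t}_{\mathrm{Spec}}$ rather than with $\underline{\dist}_{\mathrm{Spec}}^t$ (which is not known to satisfy a triangle inequality), so I anticipate no real obstacle beyond bookkeeping of subsequences.
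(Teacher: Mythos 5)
Your proposal is correct and follows essentially the same route as the paper: both arguments use the mGH-compactness of $\mathcal{M}(K,N,d,v)$ to extract a subsequential limit $\mathbb{Y}$, apply Theorem \ref{eq:mghspec} together with the directional triangle inequality (\ref{1}) and the chain (\ref{eq:ineq}) to conclude $\underline{\dist}_{\mathrm{Spec}}^t(\mathbb{X},\mathbb{Y})=0$, and then invoke Corollary \ref{cordeg} with the matching of total measures. The only cosmetic difference is that you phrase it via the ``every subsequence has a further subsequence converging to $\mathbb{X}$'' principle while the paper phrases it as a direct contradiction, and you make explicit a couple of points the paper leaves implicit (that $\mathbb{Y}$ is not a single point, and that $\meas_Y(Y)=\meas_X(X)$).
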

\begin{proof}
The proof is done by contradition. If not, then after passing to a subsequence, there exists $\tilde{\mathbb{X}} \in \mathcal{M}$ such that $\meas_{\tilde X}(\tilde X)=\meas_X(X)$ holds, that
\begin{equation}
\mathbb{X}_i \stackrel{\mathrm{mGH}}{\to} \tilde{\mathbb{X}}
\end{equation} 
holds and that $\mathbb{X}$ is not isometric to $\tilde{\mathbb{X}}$ as metric measured spaces.
Thus Theorem \ref{eq:mghspec} with (\ref{eq:ineq}) and (\ref{1}) shows
\begin{align}
\underline{\dist}_{\mathrm{Spec}}^t(\mathbb{X}, \tilde{\mathbb{X}}) \le \stackrel{\to}{\dist^t}_{\mathrm{Spec}}(\mathbb{X}, \tilde{\mathbb{X}})&\le   \stackrel{\to}{\dist^t}_{\mathrm{Spec}}\left(\mathbb{X}, \mathbb{X}_i\right) +\stackrel{\to}{\dist^t}_{\mathrm{Spec}}\left(\mathbb{X}_i, \tilde{\mathbb{X}}\right)\nonumber \\
&\le \dist_{\mathrm{Spec}}^t\left(\mathbb{X}, \mathbb{X}_i\right) +\stackrel{\to}{\dist^t}_{\mathrm{Spec}}\left(\mathbb{X}_i, \tilde{\mathbb{X}}\right) \to 0.
\end{align}
Therefore Corollary \ref{cordeg} allows us to conclude that $\mathbb{X}$ is isometric to $\tilde{\mathbb{X}}$ as metric measured spaces, which is a contradiction.
\end{proof}
We are now in a position to prove Theorem \ref{mainthm2}. 
\begin{proof}[Proof of Theorem \ref{mainthm2}]
Firstly we omit a result, Corollary \ref{aaassrr}, which will be proved in subsection \ref{result}, to get 
\begin{equation}
\liminf_{i \to \infty}\mathrm{diam}(X_i, \dist_{X_i})>0
\end{equation}
under any of conditions (1), (2), (3) and (4). Namely it is enough to focus only on sequences in $\mathcal{M}(K, N, d, v)$ for some $d, v$.

Recall that in Corollary \ref{cordef} we already proved the equivalence between (3) and (4), thus
firstly let us prove the implication from (1) to (3). 

\textit{The proof of $(1) \Rightarrow (3)$.}

Assume that (1) holds. Then the desired mGH convergence;
\begin{equation}\label{102}
\mathbb{X}_i \xrightarrow[f_i]{\mathrm{mGH}} \mathbb{X}
\end{equation}
is already established by Theorem \ref{thmoneimp}.
Thus we focus on the remaining convergence results $\mu_j(\mathbb{X}_i) \to \mu_j(\mathbb{X})$ and $\nu_j(\mathbb{X}_i) \to \nu_j(\mathbb{X})$ for any $j$.

As discussed in the proof of Corollary \ref{cordef}, we have $\mu_j(\mathbb{X}_i) \to \mu_j(\mathbb{X})$ for any $j \le 1$.
Assume that  $\nu_1(\mathbb{X}_i)$ does not converge to $\nu_1(\mathbb{X})$. 
After passing to a subsequence, with no loss of generality we can assume that $\nu_1(\mathbb{X}_i)$ is a constant $k$ which is independent of $i$. Then Theorem \ref{prop:comp} easily allows us to conclude that $k=\nu_1(\mathbb{X}_i) < \nu_1(\mathbb{X})$ and $\mu_2(\mathbb{X}_i) \to \mu_1(\mathbb{X})$ are satisfied, namely a finite sequence of eigenvalues of the operator $-\Delta_{X_i}$;
\begin{equation}
0=\mu_0(\mathbb{X}_i)<\underbrace{\mu_1(\mathbb{X}_i) = \cdots =\mu_1(\mathbb{X}_i)}_{k=\nu_1(\mathbb{X}_i)} <\mu_2(\mathbb{X}_i)
\end{equation}
``converge'' to a finite sequence of eigenvalues of the operator $-\Delta_X$;
\begin{equation}
0=\mu_0(\mathbb{X})<\underbrace{\mu_1(\mathbb{X}) = \cdots =\mu_1(\mathbb{X})}_{k} =\mu_1(\mathbb{X}),
\end{equation}
respectively.
Finding a sequence of spectral data $\mathbf{a}_i$ of $\mathbb{X}_i$ converging to a spectral datum $\mathbf{a}$ of $\mathbb{X}$, define a spectral datum $\mathbf{b}$ of $\mathbb{X}$ by
\begin{equation}
\mathbf{b}:= \left( \phi_0^{\mathbf{a}}, \ldots, \phi_{k-1}^{\mathbf{a}}, \frac{1}{\sqrt{2}}\left( \phi_k^{\mathbf{a}}+\phi_{k+1}^{\mathbf{a}}\right), \frac{1}{\sqrt{2}}\left( \phi_k^{\mathbf{a}}-\phi_{k+1}^{\mathbf{a}}\right), \phi_{k+2}^{\mathbf{a}}, \ldots \right).
\end{equation}
Then our assumption (\ref{101}) yields that there exists a sequence of spectral data $\mathbf{b}_i$ of $\mathbb{X}_i$ such that 
\begin{equation}
\dist_{\ell^2}^{\mathrm{H}}\left(I^{\mathbf{b}_i}_t(X_i), I^{\mathbf{b}}_t(X)\right) \to 0
\end{equation}
holds.

On the other hand, by the very definition of $\nu_1(\mathbb{X}_i)$ there exists $A_i \in O(k)$ such that 
\begin{equation}
\mathbf{b}_i=\left( \phi_0^{\mathbf{a}_i}, \left( \phi_1^{\mathbf{a}_i}, \ldots, \phi_k^{\mathbf{a}_i}\right)A_i, \phi_{k+1}^{\mathbf{a}_i},\ldots\right) 
\end{equation}
holds. Thanks to the compactness of $O(k)$, with no loss of generality we can assume that $A_i \to A$ holds for some  $A \in O(k)$.  In particular $\mathbf{b}$ can be written by
\begin{equation}
\mathbf{b}=\left( \phi_0^{\mathbf{a}}, \left( \phi_1^{\mathbf{a}}, \ldots, \phi_k^{\mathbf{a}}\right)A, \phi_{k+1}^{\mathbf{a}},\ldots\right), 
\end{equation}
which shows that $\phi_{k+1}^{\mathbf{a}}$ can be obtained by a linear combination of $\phi_1^{\mathbf{a}}, \ldots, \phi_k^{\mathbf{a}}$. This is a contradiction. Thus we have $\nu_1(\mathbb{X}_i) \to \nu_1(\mathbb{X})$. Moreover this observation also proves $\mu_2(\mathbb{X}_i) \to \mu_2(\mathbb{X})$.

Since other cases, $j\ge 2$, is similarly done  by induction on $j$, we omit the proof. Thus we have (3).

Next let us give a proof of the remaining nontrivial implication, from (3) to (2). A key (but trivial) observation from linear algebra is to find an orthonormal transformation, written as $B_j$ below, between given two orthonormal basis of the eigenspace $E_{\lambda}(\mathbb{X}_i)=\{f \in D(\Delta_{X_i})| \Delta_{X_i}f+\lambda f=0\}$ of any eigenvalue $\lambda$ of the operator $-\Delta_{X_i}$.

\textit{The proof of $(3) \Rightarrow (2)$.}

Assume that (3) holds (under the same notation as in (\ref{102})).
Let us fix a convergent sequence of spectral data $\mathbf{a}_i, \to \mathbf{a}$. Take a spectral datum $\mathbf{b}$ of $\mathbb{X}$. For any $m \in \mathbb{N}$, define a new sequence of spectral data $\mathbf{b}(m)$ of $\mathbb{X}$ by  
\begin{equation}
\mathbf{b}(m):=\left(\phi_0^{\mathbf{b}},\ldots, \phi_{b_m}^{\mathbf{b}}, \phi_{b_{m}+1}^{\mathbf{a}},\ldots \right),\quad \text{where}\,\,\,b_m:=\sum_{i=1}^m\nu_i(\mathbb{X}).
\end{equation}
Note that $\mathbf{b}(m) \to \mathbf{b}$ holds (with respect to the trivial convergence $\mathbb{X} \xrightarrow[\mathrm{id}_X]{\mathrm{mGH}} \mathbb{X}$) because of (\ref{eq:eigenfunction}). Therefore, recalling Definition \ref{as9as9jasnsh} with Proposition \ref{prop:rela}, in order to prove (2), it is enough to prove that for any $m \in \mathbb{N}$ there exists a sequence of spectral data $\mathbf{b}_n(m)$ of $\mathbb{X}_n$ such that $\mathbf{b}_n(m) \to \mathbf{b}(m)$ with respect to (\ref{102}). 

Actually this is done by putting
\begin{equation}
\mathbf{b}_n(m):=\left(\phi_0^{\mathbf{a}_i}, \left(\phi_1^{\mathbf{a}_n},\ldots,\phi_{\nu_1(\mathbb{X})}^{\mathbf{a}_n}\right)B_1,\ldots, \left(\phi_{b_{m-1}+1}^{\mathbf{a}_n},\ldots,\phi_{b_m}^{\mathbf{a}_n}\right)B_m, \phi_{b_m+1}^{\mathbf{a}_n},\ldots\right),
\end{equation}
where this is well-defined for any sufficiently large $n$, and $B_i \in O(\nu_i(\mathbb{X}))$ is defined by satisfying
\begin{equation}
\left(\underbrace{\phi_{b_{i-1}+1}^{\mathbf{b}}, \ldots, \phi_{b_i}^{\mathbf{b}}}_{\nu_i(\mathbb{X})}\right)=\left(\underbrace{\phi_{b_{i-1}+1}^{\mathbf{a}},\ldots,\phi_{b_i}^{\mathbf{a}}}_{\nu_i(\mathbb{X})}\right)B_i.
\end{equation}
Thus we have the desired convergence $\mathbf{b}_n(m) \to \mathbf{b}(m)$. Therefore we have  (2).
\end{proof}
Recall that \cite[(i) of Theorem 17]{BerardBessonGallot} proves the $\dist_{\mathrm{Spec}}^t$-convergence for a Lipschitz convergent sequence of closed Riemannian manifolds with Ricci curvature bounded below under assuming that the limit space has simple spectrum. Note that if the limit space of a mGH-convergent sequence has simple spectrum, then we can easily check (\ref{asioraoiha}) because of (\ref{444}). Therefore Theorem \ref{mainthm2} also recovers this result. 

Finally let us give an improvement of Theorem \ref{mainthm2} for a special class of $\RCD$ spaces, so-called noncollapsed $\RCD$ spaces, where an $\RCD(K, N)$ space is said to be \textit{noncollapsed} if the reference measure coincides with the $N$-dimensional Hausdorff measure $\mathcal{H}^N$. See \cite{DG} for the details.
\begin{corollary}\label{noncollapsed}
Let $\mathbb{X}_i, \mathbb{X}$ be compact noncollapsed $\RCD(K, N)$ spaces $(i=1,2,\ldots)$. Assume $\sup_i\mathrm{diam}(X_{i}, \dist_{X_i})<\infty$. If
\begin{equation}
\dist_{\mathrm{Spec}}^t\left(\mathbb{X}_i, \mathbb{X}\right) \to 0
\end{equation}
holds for some $t \in (0, \infty)$, then $\mathcal{H}^N(X_i) \to \mathcal{H}^N(X)$. In particular we have
\begin{equation}
\mathbb{X}_i \stackrel{\mathrm{mGH}}{\to} \mathbb{X},\quad \mu_j(\mathbb{X}_i) \to \mu_j(\mathbb{X})\,\,\,\text{and}\,\,\,\nu_j(\mathbb{X}_i) \to \nu_j(\mathbb{X}), \quad \forall j.
\end{equation}
\end{corollary}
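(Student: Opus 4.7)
The plan is to reduce the statement to Theorem \ref{mainthm2} by exploiting the scale-invariance of the spectral embedding under constant rescalings of the reference measure. First, I would pass from the original noncollapsed spaces to their unit-mass normalizations
\begin{equation*}
\tilde{\mathbb{X}}_i := \left(X_i,\dist_{X_i},\tfrac{\mathcal{H}^N}{\mathcal{H}^N(X_i)}\right),\qquad \tilde{\mathbb{X}} := \left(X,\dist_X,\tfrac{\mathcal{H}^N}{\mathcal{H}^N(X)}\right).
\end{equation*}
Because multiplying the reference measure of an $\RCD(K,N)$ space by a positive constant leaves the Cheeger energy, the Laplacian and the curvature--dimension condition unchanged, each $\tilde{\mathbb{X}}_i$ and $\tilde{\mathbb{X}}$ is again a compact $\RCD(K,N)$ space, with the same eigenvalues and multiplicities as the original. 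Moreover, exactly as in Remark \ref{scalei} and (\ref{eq:amb}), the embedding $I^{\mathbf{a}}_t$ is unchanged by the passage $\mathbb{X}_i\leadsto \tilde{\mathbb{X}}_i$ once one rescales the eigenfunctions accordingly; consequently $\dist_{\mathrm{Spec}}^t(\tilde{\mathbb{X}}_i,\tilde{\mathbb{X}})=\dist_{\mathrm{Spec}}^t(\mathbb{X}_i,\mathbb{X})\to 0$.

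Since $\tilde{\meas}_i(X_i)=1=\tilde{\meas}(X)$, condition (1) of Theorem \ref{mainthm2} holds for the sequence $\{\tilde{\mathbb{X}}_i\}$ with limit $\tilde{\mathbb{X}}$ (the uniform upper bound on $\mathrm{diam}(X_i,\dist_{X_i})$ is given, and the lower bound $\liminf_i\mathrm{diam}(X_i,\dist_{X_i})>0$ is built into Theorem \ref{mainthm2} via Corollary \ref{aaassrr}). The implication (1) $\Rightarrow$ (3) of that theorem then yields
\begin{equation*}
\tilde{\mathbb{X}}_i\xrightarrow{\mathrm{mGH}}\tilde{\mathbb{X}},\qquad \mu_j(\mathbb{X}_i)\to\mu_j(\mathbb{X}),\qquad \nu_j(\mathbb{X}_i)\to\nu_j(\mathbb{X})\quad\text{for all }j,
\end{equation*}
the last two being insensitive to the measure-rescaling. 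In particular the underlying metric spaces satisfy $(X_i,\dist_{X_i})\xrightarrow{\mathrm{GH}}(X,\dist_X)$.

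To upgrade this to $\mathcal{H}^N(X_i)\to\mathcal{H}^N(X)$ I would invoke the continuity of the $N$-dimensional Hausdorff measure under GH-convergence within the class of noncollapsed $\RCD(K,N)$ spaces, proved by De Philippis--Gigli. Since both $\mathbb{X}_i$ and $\mathbb{X}$ are assumed noncollapsed, the Gromov--Hausdorff convergence of the metric spaces above automatically forces $\mathcal{H}^N(X_i)\to\mathcal{H}^N(X)$. Combining this with the metric convergence gives the desired $\mathbb{X}_i\xrightarrow{\mathrm{mGH}}\mathbb{X}$ in the original noncollapsed framework, and the eigenvalue/multiplicity convergences have already been recorded in the previous step.

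The one nontrivial input beyond the material developed in the excerpt is the noncollapsed continuity of $\mathcal{H}^N$ under GH-convergence; this is the only place where the noncollapsed hypothesis is genuinely used, and without it the volumes of the $\mathbb{X}_i$ could in principle drift along a rescaling that is invisible to $\dist_{\mathrm{Spec}}^t$ (cf.\ the ambiguity $\mathbb{Y}=(X,\dist_X,c^{-2}\meas_X)$ in (\ref{eq:amb})). All other steps are a direct reduction to Theorem \ref{mainthm2}; no genuinely new spectral argument is needed.
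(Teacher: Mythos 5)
Your proposal is correct and follows essentially the same route as the paper: normalize the measures to unit mass, use Remark \ref{scalei} to see the spectral distance is unchanged, apply Theorem \ref{mainthm2} to the normalized sequence, and invoke De Philippis--Gigli's volume continuity for noncollapsed spaces to recover $\mathcal{H}^N(X_i)\to\mathcal{H}^N(X)$. No substantive differences.
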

\begin{proof}
Let 
\begin{equation}
\tilde{\mathbb{X}}_i:=\left(X_i, \dist_{X_i}, \frac{\mathcal{H}^N}{\mathcal{H}^N(X_i)}\right), \quad \tilde{\mathbb{X}}:= \left(X, \dist_X,  \frac{\mathcal{H}^N}{\mathcal{H}^N(X)}\right).
\end{equation}
Then since $\dist_{\mathrm{Spec}}^t(\tilde{\mathbb{X}}_i, \tilde{\mathbb{X}}) \to 0$ because of Remark \ref{scalei}, 
Theorem \ref{mainthm2} yields
\begin{equation}
\tilde{\mathbb{X}}_i \stackrel{\mathrm{mGH}}{\to}\tilde{\mathbb{X}}.
\end{equation}
Thus it follows from \cite[Theorem 1.2]{DG} that $\mathcal{H}^N(X_i) \to \mathcal{H}^N(X)$ holds. The remaining statements also come from Theorem \ref{mainthm2}.
\end{proof}
\section{Spectral distance in the sense of  Kasue-Kumura}\label{77889}
Before giving the proof of Theorem \ref{kk}, let us provide  the definition of the spectral distance in the sense of \cite{KK, KK2} for compact $\RCD(K, N)$ spaces.
\begin{definition}[Spectral distance in the sense of  Kasue-Kumura]
Let $\mathbb{X}, \mathbb{Y}$ be compact $\RCD(K, N)$ spaces for some $K \in \mathbb{R}$ and some $N \in [1, \infty)$. Then the \textit{spectral distance} in the sense of Kasue-Kumura between them, denoted by $\tilde \dist_{\mathrm{Spec}}(\mathbb{X}, \mathbb{Y})$, is defined by the infimum of $\epsilon \in (0, \infty)$ satisfying that there exist maps $f:X \to Y$ and $g:Y \to X$ such that 
\begin{equation}
e^{-(t+1/t)}\left|p_Y(f(x), f(\tilde x), t)-p_X(x, \tilde x, t)\right|<\epsilon,\quad \forall x,\,\,\forall \tilde x \in X,\,\,\,\forall t \in (0, \infty)
\end{equation}
and 
\begin{equation}
e^{-(t+1/t)}\left|p_X(g(y), g(\tilde y), t)-p_Y(y, \tilde y, t)\right|<\epsilon,\quad \forall y,\,\,\forall\tilde y \in Y,\,\,\,\forall t \in (0, \infty)
\end{equation}
are satisfied.
\end{definition}
The proof of the next proposition is given by the same ideas as discussed in \cite{KK} (see also \cite{KK2}) in the smooth framework. For reader's convenience, let us provide the proof in this setting.
\begin{proposition}\label{prosss}
The spectral distance $\tilde \dist_{\mathrm{Spec}}$ in the sense above is indeed a distance on $\mathcal{M}=\mathcal{M}(K, N, d, v)$ for all $K, N, d, v$.
\end{proposition}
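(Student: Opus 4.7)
Symmetry of $\tilde{\dist}_{\mathrm{Spec}}$ is immediate from the definition (swap $(f,g)$). For the triangle inequality, given witnesses $(f_1,g_1)$ of $\tilde{\dist}_{\mathrm{Spec}}(\mathbb{X},\mathbb{Y})<\epsilon$ and $(f_2,g_2)$ of $\tilde{\dist}_{\mathrm{Spec}}(\mathbb{Y},\mathbb{Z})<\delta$, the composed pair $(f_2\circ f_1,\,g_1\circ g_2)$ witnesses $\tilde{\dist}_{\mathrm{Spec}}(\mathbb{X},\mathbb{Z})<\epsilon+\delta$ after inserting the intermediate term $p_Y(f_1(x),f_1(\tilde x),t)$ and applying the ordinary triangle inequality (the common factor $e^{-(t+1/t)}$ cancels cleanly); taking infima over $\epsilon,\delta$ yields the desired inequality for $\tilde{\dist}_{\mathrm{Spec}}$.

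The substantive content is nondegeneracy: $\tilde{\dist}_{\mathrm{Spec}}(\mathbb{X},\mathbb{Y})=0$ should force $\mathbb{X}$ and $\mathbb{Y}$ to be isomorphic as metric measure spaces. Choose witnesses $f_n\colon X\to Y$ and $g_n\colon Y\to X$ with parameters $\epsilon_n\to 0^+$, so that for every fixed $t>0$,
\[
\sup_{x,\tilde x\in X}\bigl|p_Y(f_n(x),f_n(\tilde x),t)-p_X(x,\tilde x,t)\bigr|\longrightarrow 0.
\]
Fix a countable dense set $\{x_j\}_j\subset X$; by compactness of $Y$ and a diagonal extraction we may assume $f_n(x_j)\to y_j\in Y$ for every $j$. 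Continuity of $p_Y$ from the Gaussian estimate (Theorem~\ref{thm:gaussian}) then gives $p_X(x_j,x_k,t)=p_Y(y_j,y_k,t)$ for all $j,k$ and $t>0$, and the Varadhan-type asymptotics \eqref{varad} yield $\dist_X(x_j,x_k)=\dist_Y(y_j,y_k)$. Hence $x_j\mapsto y_j$ extends uniquely to an isometric embedding $f\colon X\to Y$. Running the same argument on $g_n$ yields an isometric embedding $g\colon Y\to X$; since any isometric self-embedding of a compact metric space is surjective, $g\circ f$ and $f\circ g$ are bijective, and therefore so are $f$ and $g$. By density and continuity the identity $p_X(x,\tilde x,t)=p_Y(f(x),f(\tilde x),t)$ then holds for all $x,\tilde x\in X$ and $t>0$.

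It remains to match the measures. Integrating the pointwise heat kernel identity in $\tilde x$ against $\meas_X$ and using conservativity of the heat semigroup on compact $\RCD(K,N)$ spaces gives
\[
\int_Y p_Y(z,y,t)\,d(f_\sharp \meas_X)(y)=1=\int_Y p_Y(z,y,t)\,d\meas_Y(y),\qquad \forall z\in Y,\ \forall t>0.
\]
Together with symmetry of $p_Y$, this says that the $L^1$-density of $\tilde h_t(f_\sharp \meas_X)$ with respect to $\meas_Y$ is identically $1$, i.e.\ $\tilde h_t(f_\sharp\meas_X)=\meas_Y$ for every $t>0$; letting $t\to 0^+$ in the weak topology yields $f_\sharp \meas_X=\meas_Y$. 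The principal obstacle is precisely this last step --- upgrading a pointwise heat kernel identity to equality of the underlying measures --- which the argument above resolves by passing through the dual heat flow $\tilde h_t$, in the same spirit as the measure-matching step in the proof of Theorem~\ref{thmdeg}.
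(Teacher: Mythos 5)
Your proposal is correct and follows essentially the same route as the paper: a diagonal argument on a countable dense set plus the Varadhan asymptotics \eqref{varad} to produce distance-preserving maps $f$ and $g$, surjectivity via the standard fact that a distance-preserving self-map of a compact metric space is onto, and then a heat-semigroup argument to match the measures. The only cosmetic difference is in the last step, where you phrase the measure identification through the dual heat flow $\tilde h_t(f_\sharp\meas_X)=\meas_Y$ and let $t\to 0^+$, whereas the paper runs the dual computation on test functions $\psi\in C(Y)$ using $h_t\psi\to\psi$ together with conservativity of $p_X$; these are the same argument in dual form.
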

\begin{proof}
It is easily checked by definition that $\tilde \dist_{\mathrm{Spec}}$ is symmetric with the triangle inequality. Thus let us prove the nondegeneracy. Assume that $\tilde \dist_{\mathrm{Spec}}(\mathbb{X}, \mathbb{Y})=0$ holds for some $\mathbb{X}, \mathbb{Y} \in \mathcal{M}$. Then there exist sequences of maps $f_i:X \to Y$ and $g_i:Y \to X$ such that 
\begin{equation}
e^{-(t+1/t)}\left|p_Y(f_i(x), f_i(\tilde x), t)-p_X(x, \tilde x, t)\right|<\frac{1}{i},\quad \forall x,\,\,\forall \tilde x \in X,\,\,\,\forall t \in (0, \infty)
\end{equation}
and 
\begin{equation}
e^{-(t+1/t)}\left|p_X(g_i(y), g_i(\tilde y), t)-p_Y(y, \tilde y, t)\right|<\frac{1}{i},\quad \forall y,\,\,\forall\tilde y \in Y,\,\,\,\forall t \in (0, \infty)
\end{equation}
are satisfied. Let $A$ be a countable dense subset of $X$. After passing to a subsequence with a diagonal argument, we can assume that $\{f_i(x)\}_i$ is a convergent sequence in $Y$ for any $x \in A$. Then defining a map $f:A \to Y$ by $f(x):=\lim_{i \to \infty}f_i(x)$, we have $p_Y(f(x), f(\tilde x), t)=p_X(x, \tilde x, t)$ for all $x, \tilde x \in A$ and $t \in (0, \infty)$. In particular (\ref{varad}) allows us to conclude that $f$ preserves the distances. Thus there exists a unique continuous extention of $f$ to a map from $X$ to $Y$, still denoted by $f:X \to Y$. It is trivial that $f$ also preserves the distance and the heat kernel. In particular $f$ is injective.
Similarly we can construct a map $g:Y \to X$ which preserves the distance (and the heat kernel).

On the other hand, since $f \circ g:Y \to Y$ preserves the distance, it must be an isometry as metric spaces  because $Y$ is compact  (see for instance \cite{BBI}). In particular $f$ is surjective. Thus $f$ is an isometry as metric spaces.

In order to check that $f$ preserves the measures, taking $\psi \in C(Y)$, we have
\begin{align}
\int_Y\psi \di \left(f_{\sharp}\meas_X\right)&=\int_X\psi \circ f(x)\di \meas_X(x) \nonumber \\
&=\lim_{t \to 0^+}\int_X \int_Y\psi (y)p_Y(y, f(x), t)\di \meas_Y(y)\di \meas_X(x) \nonumber \\
&=\lim_{t \to 0^+}\int_Y \int_X\psi (y)p_X(f^{-1}(y), x, t)\di \meas_X(x)\di \meas_Y(y) =\int_Y\psi \di \meas_Y.
\end{align}
Thus $f_{\sharp}\meas_X=\meas_Y$, namely $\mathbb{X}$ is isometric to $\mathbb{Y}$ as metric measured spaces.
\end{proof}
Let us end this section by finishing the proof of the remaing main result. 
\begin{proof}[Proof of Theorem \ref{kk}]
As in the proof of Theorem \ref{mainthm2}, we also omit a result, Proposition \ref{proplower}. The proposition allows us to show that the following argument is enough to conclude.

Recalling that $\mathcal{M}=\mathcal{M}(K, N, d, v)$ is compact with respect to $\dist_{\mathrm{mGH}}$,  thanks to Proposition \ref{prosss},  it is enough to check that the canonical map, $\mathrm{id}: (\mathcal{M}, \dist_{\mathrm{mGH}}) \to (\mathcal{M}, \tilde \dist_{\mathrm{Spec}})$, is continuous. Actually this continuity is achieved by applying the pointwise convergence of the heat kernels with respect to mGH-convergence proved in \cite[Theorem 3.3]{AmbrosioHondaTewodrose} (see also \cite[Corollary 3.10]{ZZ}) with Theorem \ref{thm:gaussian}. Thus we conclude. 
\end{proof}
\section{Collapsing to a single point}\label{finalfinal}
In this final section we discuss the remaining case; a sequence of compact $\RCD(K, N)$ spaces converge to a single point.
Firstly let us clarify the meanings of $I^{\mathbf{a}}_t$ and of the heat kernel on a single point.
\subsection{Convention}
Let $\mathbb{X}$ be a metric measured space and assume that this is a single point, namely $X=\{x\}$ and $\meas_X=c\delta_x$ for some $c \in (0, \infty)$, where $\delta_x$ denotes the Dirac measure at $x$.
Then the heat kernel $p_X$ of $\mathbb{X}$ is defined by
\begin{equation}
p_X(x,x, t):=\frac{1}{\meas_X(X)}=\frac{1}{c}.
\end{equation}
Furthermore the spectral datum $\mathbf{a}$ of $\mathbb{X}$ is uniquely determined by
\begin{equation}
\mathbf{a}=\left(\frac{1}{\sqrt{c}},0,0,\ldots \right)
\end{equation}
and the embedding $I^{\mathbf{a}}_t:X \to \ell^2$ is defined by
\begin{equation}
I^{\mathbf{a}}_t(x):=\left(0,0,\ldots \right) \in \ell^2.
\end{equation}
Then we can consider the previous notions even in this setting, namely, for example, the spectral distances in the sense of \cite{BerardBessonGallot} or of \cite{KK} between compact $\RCD(K, N)$ spaces which are possibly single points, are well-defined by the same ways. 
\subsection{Results}\label{result}
Let us emphasize that Theorems \ref{asjaisahrah} and \ref{asoariiiis} below show that $\dist_{\mathrm{Spec}}^t$ and $\dist_{\mathrm{mGH}}$ are equivalent \textit{without any assumption} on eigenvalues if the limit space is a single point.
\begin{theorem}\label{asjaisahrah}
Let 
\begin{equation}\label{9as0a9r8a}
\mathbb{X}_i \stackrel{\mathrm{mGH}}{\to} \mathbb{X}.
\end{equation}
be a mGH convergent sequence of compact $\RCD(K, N)$ spaces for some $K \in \mathbb{R}$ and some $N \in [1, \infty)$. Assume that $\mathbb{X}$ is a single point.
Then for any $t \in (0, \infty)$ we have 
\begin{equation}\label{asohaorha}
\dist_{\mathrm{Spec}}^t(\mathbb{X}_i, \mathbb{X}) \to 0.
\end{equation}
\end{theorem}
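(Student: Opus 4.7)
The plan is to reduce the spectral distance to a single expression involving diagonal values of the heat kernel and then invoke the uniform convergence of heat kernels under mGH convergence.

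First, by the convention adopted for a single point, the spectral data $\mathbf{b}$ of $\mathbb{X}$ is uniquely determined and $I^{\mathbf{b}}_t(X)=\{0\}\subset\ell^2$. Hence, for any sequence of spectral datas $\mathbf{a}_i$ of $\mathbb{X}_i$,
\begin{equation*}
\dist_{\mathrm{Spec}}^t(\mathbb{X}_i,\mathbb{X})=\sup_{\mathbf{a}_i}\dist_{\ell^2}^{\mathrm{H}}\bigl(I^{\mathbf{a}_i}_t(X_i),\{0\}\bigr)=\sup_{\mathbf{a}_i}\sup_{x\in X_i}\norm{I^{\mathbf{a}_i}_t(x)}_{\ell^2}.
\end{equation*}
Using that $\{\phi_j^{\mathbf{a}_i}\}_j$ is an $L^2$-orthonormal basis with $\phi_0^{\mathbf{a}_i}\equiv\meas_{X_i}(X_i)^{-1/2}$, together with the pointwise spectral expansion (\ref{eq:expansion1}), I obtain
\begin{equation*}
\norm{I^{\mathbf{a}_i}_t(x)}_{\ell^2}^{2}=\meas_{X_i}(X_i)\sum_{j\ge 1}e^{-\lambda_j(\mathbb{X}_i)t}\bigl(\phi_j^{\mathbf{a}_i}(x)\bigr)^{2}=\meas_{X_i}(X_i)\,p_{X_i}(x,x,t)-1,
\end{equation*}
which in particular is independent of the choice of $\mathbf{a}_i$.

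Hence the assertion reduces to showing
\begin{equation*}
\sup_{x\in X_i}\bigl|\meas_{X_i}(X_i)\,p_{X_i}(x,x,t)-1\bigr|\longrightarrow 0.
\end{equation*}
Writing $\mathbb{X}=(\{x_0\},\dist_X,c\,\delta_{x_0})$, the convention gives $p_X(x_0,x_0,t)=1/c$, while $\meas_{X_i}(X_i)\to c$ follows from Definition \ref{defmgh}. Moreover, since $\mathbb{X}$ consists of a single point, every sequence $x_i\in X_i$ automatically mGH-converges to $x_0$ with respect to (\ref{9as0a9r8a}).

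The remaining step is to upgrade the pointwise convergence of heat kernels under mGH convergence established in \cite{AmbrosioHondaTewodrose,ZZ} (and already invoked in the proof of Theorem \ref{kk}) to uniformity in $x\in X_i$. This is the main subtlety, and I would handle it by a contradiction + subsequence argument: if the supremum above did not vanish, I could extract a subsequence and points $x_i\in X_i$ with $|\meas_{X_i}(X_i)\,p_{X_i}(x_i,x_i,t)-1|\ge\tau>0$; but $x_i\to x_0$ (there being no alternative in $\mathbb{X}$), so the pointwise heat kernel convergence combined with $\meas_{X_i}(X_i)\to c$ would force $\meas_{X_i}(X_i)\,p_{X_i}(x_i,x_i,t)\to c\cdot(1/c)=1$, a contradiction. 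This yields (\ref{asohaorha}) and completes the proof.
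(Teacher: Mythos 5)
Your proposal is correct in substance but follows a genuinely different route from the paper. You first observe that $I^{\mathbf{b}}_t(X)=\{0\}$ reduces everything to $\sup_x\norm{I^{\mathbf{a}_i}_t(x)}_{\ell^2}$, and then use the clean identity $\norm{I^{\mathbf{a}_i}_t(x)}_{\ell^2}^2=\meas_{X_i}(X_i)\,p_{X_i}(x,x,t)-1$ (which is correct, since $\phi_0^{\mathbf{a}_i}\equiv\meas_{X_i}(X_i)^{-1/2}$ and (\ref{eq:expansion1}) converges in $C(X\times X)$), so the theorem becomes a statement about uniform convergence of the diagonal heat kernel. The paper instead estimates the $\ell^2$-norm term by term, using the uniform eigenfunction bounds (\ref{eq:eigenfunction}) together with $\lambda_1(\mathbb{X}_i)\to\infty$ to make the tail $\sum_{j\ge1}e^{-\lambda_j(\mathbb{X}_i)t}(\phi_j^{\mathbf{a}_i})^2$ vanish uniformly; this is entirely self-contained and quantitative. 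The one point you should shore up is the final step: the heat-kernel convergence of \cite[Theorem 3.3]{AmbrosioHondaTewodrose} and \cite[Corollary 3.10]{ZZ} is invoked elsewhere in the paper only for limits in $\mathcal{M}(K,N,d,v)$, i.e.\ with diameter bounded away from zero, and the entire purpose of Section \ref{finalfinal} is that the degenerate one-point limit is not covered by the machinery of Theorems \ref{prop:comp} and \ref{thm:convi}. So either verify explicitly that the cited heat-kernel convergence applies when the limit heat kernel is the constant $1/c$ on a point, or note that the needed statement $\sup_{x}\abs{\meas_{X_i}(X_i)\,p_{X_i}(x,x,t)-1}\to0$ follows directly from (\ref{eq:eigenfunction}) and $\lambda_1(\mathbb{X}_i)\to\infty$ --- which is precisely the paper's argument, and would make your reduction via the diagonal heat kernel an attractive but ultimately equivalent repackaging. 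Your contradiction argument for upgrading pointwise to uniform convergence is fine as stated, since every sequence $x_i\in X_i$ trivially converges to the unique limit point.
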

\begin{proof}
Thanks to (\ref{eq:eigenfunction}), we have for any sufficiently large $i$,
\begin{equation}
|e^{-\lambda_j(\mathbb{X}_i)t/2}\phi_j^{X_i}| \le C(K,N)e^{-\lambda_j(\mathbb{X}_i)t/2}\lambda_j(\mathbb{X}_i)^{N/4},\quad \forall j.
\end{equation}
Conbining this with a fact that $\lambda_1(\mathbb{X}_i) \to \infty$ implies
\begin{equation}\label{eq56g}
\dist^{\mathrm{H}}_{\ell^2}(I^{\mathbf{a}_i}_t(X_i), I^{\mathbf{a}}_t(X)) \to 0
\end{equation}
for all spectral data $\mathbf{a}_i, \mathbf{a}$ of $\mathbb{X}_i, \mathbb{X}$, respectively. In particular (\ref{eq56g}) shows (\ref{asohaorha}).
\end{proof}
\begin{corollary}\label{aaassrr}
Let $\mathbb{X}_i, \mathbb{X}$ be compact $\RCD(K, N)$ spaces $(i=1,2,\ldots)$. If $\mathbb{X}$ is not a single point and (\ref{asohaorha}) holds for some $t \in (0, \infty)$, then
\begin{equation}
\liminf_{i \to \infty}\mathrm{diam}(X_i, \dist_i)>0.
\end{equation}
\end{corollary}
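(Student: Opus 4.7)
The plan is to argue by contradiction. Assume that, along a subsequence still indexed by $i$, $\mathrm{diam}(X_i)\to 0$. I will show that the $\ell^2$-diameter of $I^{\mathbf{a}_i}_t(X_i)$ shrinks to zero, regardless of the choice of spectral data $\mathbf{a}_i$. Combined with the assumption $\dist_{\mathrm{Spec}}^t(\mathbb{X}_i,\mathbb{X})\to 0$, this will force $I^{\mathbf{b}}_t(X)$ to be a single point for some (indeed every) spectral data $\mathbf{b}$ of $\mathbb{X}$, which then forces each eigenfunction $\phi_j^{\mathbf{b}}$ with $j\ge 1$ to be constant, hence identically zero---impossible when $\mathbb{X}$ is not a single point.

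For the first step, I would use the expansion (\ref{eq:expansion1}) and the fact that $\phi_0^{\mathbf{a}_i}$ is a constant to obtain the identity
\begin{equation*}
\bigl\|I^{\mathbf{a}_i}_t(x)-I^{\mathbf{a}_i}_t(y)\bigr\|_{\ell^2}^{2}=\meas_{X_i}(X_i)\bigl[p_{X_i}(x,x,t)+p_{X_i}(y,y,t)-2p_{X_i}(x,y,t)\bigr]
\end{equation*}
for any $x,y\in X_i$. Once $\mathrm{diam}(X_i)\le \sqrt t$, the ball $B_{\sqrt t}(x)$ exhausts $X_i$, so the gradient bound (\ref{eq:equi lip}) combined with the Sobolev-to-Lipschitz property of $\RCD(K,N)$ spaces yields that $p_{X_i}(\cdot,z,t)$ is Lipschitz with constant at most $C(K,N)t^{-1/2}e^{Ct}\meas_{X_i}(X_i)^{-1}$, uniformly in $z\in X_i$. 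Using symmetry of the heat kernel to bound each of $|p_{X_i}(x,x,t)-p_{X_i}(x,y,t)|$ and $|p_{X_i}(y,y,t)-p_{X_i}(x,y,t)|$, the two factors of $\meas_{X_i}(X_i)$ cancel to give
\begin{equation*}
\mathrm{diam}_{\ell^2}\bigl(I^{\mathbf{a}_i}_t(X_i)\bigr)^{2}\le C(K,N,t)\,\mathrm{diam}(X_i)\longrightarrow 0.
\end{equation*}

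For the second step, I would fix any spectral data $\mathbf{b}$ of $\mathbb{X}$. The definition of $\dist_{\mathrm{Spec}}^t$ and the hypothesis supply spectral datas $\mathbf{a}_i$ of $\mathbb{X}_i$ with $\dist^{\mathrm{H}}_{\ell^2}(I^{\mathbf{a}_i}_t(X_i),I^{\mathbf{b}}_t(X))\to 0$. Since the Hausdorff distance controls the difference of diameters, the first step forces $\mathrm{diam}_{\ell^2}(I^{\mathbf{b}}_t(X))=0$, i.e.\ $I^{\mathbf{b}}_t$ is constant on $X$. Reading off each coordinate, every $\phi_j^{\mathbf{b}}$ with $j\ge 1$ is constant on $X$; orthogonality to the constant $\phi_0^{\mathbf{b}}$ in $L^2(X,\meas_X)$ then forces $\phi_j^{\mathbf{b}}\equiv 0$, contradicting $\|\phi_j^{\mathbf{b}}\|_{L^2}=1$. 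The hypothesis that $\mathbb{X}$ is not a single point enters precisely here, guaranteeing that $\lambda_1(\mathbb{X})$ is a genuine positive eigenvalue and hence that at least one such $\phi_1^{\mathbf{b}}$ exists.

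The only real obstacle is the exact cancellation in the first step: one needs a Lipschitz estimate for $p_{X_i}(\cdot,z,t)$ whose $\meas_{X_i}(X_i)^{-1}$ dependence matches the $\meas_{X_i}(X_i)$ prefactor in the definition of $I^{\mathbf{a}_i}_t$, so that no extra hypothesis on the total masses is required. This cancellation is delivered automatically by the sharp Jiang-Li-Zhang gradient estimate as soon as the entire space is contained in a single ball $B_{\sqrt t}(x)$, which is automatic once the diameter is smaller than $\sqrt t$.
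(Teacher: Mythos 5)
Your proof is correct, and it takes a genuinely different route from the paper's. The paper also argues by contradiction, but after extracting a subsequence with $\mathrm{diam}(X_i,\dist_{X_i})\to 0$ it normalizes the measures, invokes precompactness of compact $\RCD(K,N)$ spaces with bounded diameter to produce a one-point mGH limit $\tilde{\mathbb{X}}$, applies Theorem \ref{asjaisahrah} (whose proof rests on the uniform sup bounds (\ref{eq:eigenfunction}) together with $\lambda_1(\mathbb{X}_i)\to\infty$) to get $\dist_{\mathrm{Spec}}^t(\mathbb{X}_i,\tilde{\mathbb{X}})\to 0$, and then concludes from the triangle inequality that $\dist_{\mathrm{Spec}}^t(\mathbb{X},\tilde{\mathbb{X}})=0$, which forces $\mathbb{X}$ to be a single point. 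You bypass the compactness theorem and the auxiliary limit space altogether: the identity $\|I^{\mathbf{a}_i}_t(x)-I^{\mathbf{a}_i}_t(y)\|_{\ell^2}^2=\meas_{X_i}(X_i)\,[\,p_{X_i}(x,x,t)+p_{X_i}(y,y,t)-2p_{X_i}(x,y,t)\,]$ (valid by (\ref{eq:expansion1}) since $\phi_0^{\mathbf{a}_i}$ is constant), combined with the gradient bound (\ref{eq:equi lip}) whose constant depends only on $K,N$ and in which $B_{\sqrt t}(x)=X_i$ once $\mathrm{diam}(X_i)\le\sqrt t$ so that the two mass factors cancel exactly, gives the quantitative estimate $\mathrm{diam}_{\ell^2}(I^{\mathbf{a}_i}_t(X_i))^2\le C(K,N,t)\,\mathrm{diam}(X_i,\dist_{X_i})$ for every spectral data, and that is all you need. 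The two endgames coincide: once $I^{\mathbf{b}}_t(X)$ is a single point, every $\phi_j^{\mathbf{b}}$ with $j\ge 1$ is a constant orthogonal to the constants, hence zero, contradicting $\|\phi_j^{\mathbf{b}}\|_{L^2}=1$. What your approach buys is that it is quantitative and needs no normalization of measures, no subsequential mGH limit, and no input of the form $\lambda_1(\mathbb{X}_i)\to\infty$; what the paper's buys is that it reuses Theorem \ref{asjaisahrah}, which is needed elsewhere anyway, and yields the stronger conclusion that the whole image $I^{\mathbf{a}_i}_t(X_i)$ Hausdorff-converges to the origin of $\ell^2$ rather than merely having vanishing diameter.
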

\begin{proof}
The proof is done by contradiction. If the assertion is not satisfied, then after passing to a subsequence with normalizations of measures, we can assume that $\mathbb{X}_i$ mGH converge to a single point $\tilde{\mathbb{X}}$.
Thus since Theorem \ref{asjaisahrah} yields
$\dist_{\mathrm{Spec}}^t(\mathbb{X}, \tilde{\mathbb{X}}) = 0$, we see that $\mathbb{X}$ must be also a single point.
This is a contradiction.
\end{proof}
\begin{theorem}\label{asoariiiis}
Let $\mathbb{X}_i$ be a sequence of compact $\RCD(K, N)$ spaces for some $K \in \mathbb{R}$ and some $N \in [1, \infty)$. Assume that $\sup_i\mathrm{diam}(X_i, \dist_{X_i})<\infty$ holds, that $\meas_{X_i}(X_i) \to c$ holds for some $c \in (0, \infty)$ and that
(\ref{asohaorha})
holds for a single point $\mathbb{X}$ and some $t \in (0, \infty)$. Then (\ref{9as0a9r8a}) holds with $\meas_X(X)=c$.
\end{theorem}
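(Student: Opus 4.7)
The plan is to reduce everything to the single assertion $\lambda_1(\mathbb{X}_i)\to\infty$, from which the desired mGH convergence follows by the precompactness of compact $\RCD(K,N)$ spaces with bounded diameter and bounded total measure together with the continuity (\ref{00}) of eigenvalues along mGH convergence from Theorem \ref{prop:comp}. The key leverage is that when the target is a single point the spectral distance $\dist^t_{\mathrm{Spec}}$ admits a very clean intrinsic expression. Indeed, for a single-point space $\mathbb{X}$ one has $I^{\mathbf{c}}_t(X)=\{0\}\subset\ell^2$ for its unique spectral datum, so both the ``$\to$'' and ``$\leftarrow$'' sup--inf expressions defining $\dist^t_{\mathrm{Spec}}(\mathbb{X}_i,\mathbb{X})$ collapse to $\sup_{x\in X_i}\|I^{\mathbf{a}_i}_t(x)\|_{\ell^2}$ for an arbitrary spectral datum $\mathbf{a}_i$ of $\mathbb{X}_i$. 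Using (\ref{eq:expansion1}) together with the orthogonal invariance of $\sum_{j}\phi_j(x)^2$ within each eigenspace, this quantity is actually independent of $\mathbf{a}_i$ and equals
\begin{equation*}
\|I^{\mathbf{a}_i}_t(x)\|_{\ell^2}^2 \;=\; \meas_{X_i}(X_i)\,p_{X_i}(x,x,t)-1.
\end{equation*}
Hence the hypothesis is equivalent to $\sup_{x\in X_i}\meas_{X_i}(X_i)\,p_{X_i}(x,x,t)\to 1$.

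Next I would integrate this against $\meas_{X_i}$, invoking the trace identity $\int_{X_i}p_{X_i}(x,x,t)\di\meas_{X_i}=\sum_{j\ge 0}e^{-\lambda_j(\mathbb{X}_i)t}$, which is an immediate consequence of (\ref{eq:expansion1}) and the $L^2$-orthonormality of $\{\phi_j^{\mathbf{a}_i}\}_j$. This yields
\begin{equation*}
0\le\sum_{j\ge 1}e^{-\lambda_j(\mathbb{X}_i)t}\;=\;\int_{X_i}p_{X_i}(x,x,t)\di\meas_{X_i}-1\;\le\;\meas_{X_i}(X_i)\sup_{x\in X_i}p_{X_i}(x,x,t)-1\;\longrightarrow\; 0,
\end{equation*}
which forces $\lambda_1(\mathbb{X}_i)\to\infty$. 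To conclude the mGH convergence I would then argue by contradiction: if $\mathbb{X}_i$ does not mGH-converge to the single point with mass $c$, then using $\meas_{X_i}(X_i)\to c$ one has $\limsup_i\mathrm{diam}(X_i,\dist_{X_i})>0$, so some subsequence satisfies $\mathrm{diam}(X_{i_k},\dist_{X_{i_k}})\ge\delta>0$. Along this subsequence $\mathbb{X}_{i_k}\in\mathcal{M}(K,N,d,v)$ for suitable $d,v\in[1,\infty)$, and the compactness of $\mathcal{M}(K,N,d,v)$ allows us to extract a further subsubsequence mGH-converging to some $\tilde{\mathbb{X}}\in\mathcal{M}(K,N,d,v)$ whose diameter is at least $\delta$, in particular not a single point. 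Theorem \ref{prop:comp} then yields $\lambda_1(\mathbb{X}_{i_{k_l}})\to\lambda_1(\tilde{\mathbb{X}})<\infty$, contradicting $\lambda_1(\mathbb{X}_i)\to\infty$.

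The only delicate point is establishing the identity $\|I^{\mathbf{a}}_t(x)\|_{\ell^2}^2=\meas_X(X)p_X(x,x,t)-1$ and the consequent reduction of $\dist^t_{\mathrm{Spec}}(\mathbb{X}_i,\mathbb{X})$ to a purely heat-kernel quantity when $\mathbb{X}$ is a single point; once this is isolated, the remainder is a routine combination of the trace formula with the compactness and eigenvalue-continuity results recalled in Section \ref{77}.
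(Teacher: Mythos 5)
Your proof is correct, but it takes a genuinely different route from the paper's. The paper argues by contradiction exactly as you do in your final step (extract a subsequence with diameters bounded below, use the compactness of $\mathcal{M}(K,N,d,v)$ to produce a non-trivial limit $\tilde{\mathbb{X}}$), but then derives the contradiction directly from Theorem \ref{thm:convi}: the hypothesis (\ref{asohaorha}) forces $I^{\mathbf{b}}_t(\tilde X)$ to be the single point $\{0\}$, which is impossible because $I^{\mathbf{b}}_t$ is a topological embedding of the non-trivial space $\tilde X$. You instead first convert the hypothesis into the intrinsic statement $\sup_x \meas_{X_i}(X_i)\,p_{X_i}(x,x,t)\to 1$ via the identity $\bigl\| I^{\mathbf{a}}_t(x)\bigr\|_{\ell^2}^2=\meas_X(X)p_X(x,x,t)-1$ (which is valid, and correctly independent of the spectral datum by (\ref{eq:expansion1})), then use the heat trace $\int_{X_i}p_{X_i}(x,x,t)\di\meas_{X_i}=\sum_{j\ge 0}e^{-\lambda_j(\mathbb{X}_i)t}$ to conclude $\lambda_1(\mathbb{X}_i)\to\infty$, and finally contradict the eigenvalue continuity (\ref{00}) of Theorem \ref{prop:comp}. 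Both arguments rest on the same precompactness, but the paper's contradiction exploits injectivity of the eigenmap while yours exploits blow-up of the spectral gap. What your version buys is a quantitative intermediate statement ($\lambda_1(\mathbb{X}_i)\to\infty$, equivalent to collapse to a point under the standing bounds) and an explicit formula relating $\dist^t_{\mathrm{Spec}}(\plchldr,\text{pt})$ to the on-diagonal heat kernel; the paper's version is shorter and avoids the trace computation. Both are complete proofs; the only step you should make explicit is that the interchange of sum and integral in the trace identity is justified by monotone convergence (all terms $e^{-\lambda_j t}\phi_j(x)^2$ are nonnegative) together with the summability guaranteed by the Weyl-type bound in (\ref{eq:eigenfunction}).
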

\begin{proof}
The proof is also done by a contradiction. If the assertion is not satisfied, then after passing to a subsequence, we have
\begin{equation}\label{9as0a9r8aaa}
\mathbb{X}_i \stackrel{\mathrm{mGH}}{\to} \tilde{\mathbb{X}}
\end{equation}
for some compact $\RCD(K, N)$ space $\tilde{\mathbb{X}}$ with $\meas_{\tilde X}(\tilde X)=c$, which is not a single point.
On the other hand (\ref{asohaorha}) easily yields that $I^{\mathbf{b}}_t(\tilde X)$ is a single point for any spectral datum $\mathbf{b}$ of $\tilde{\mathbb{X}}$.  This is a contradiction.
\end{proof}




Finally by similar ways as above, we have the following, where we omit the proof.
\begin{proposition}\label{proplower}
Let $\mathbb{X}_i, \mathbb{X}$ be compact $\RCD(K, N)$ spaces $(i=1,2,\ldots)$. Assume that $\sup_i\mathrm{diam}(X_i, \dist_{X_i})<\infty$ holds, that $\mathbb{X}$ is not a single point and that $\mathbb{X}_i$ $\tilde{\dist}_{\mathrm{Spec}}$-converge to $\mathbb{X}$. Then
\begin{equation}
\liminf_{i \to \infty}\mathrm{diam}(X_i, \dist_i)>0.
\end{equation}
\end{proposition}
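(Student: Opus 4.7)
The proof will follow the contradiction strategy used for Corollary \ref{aaassrr}, adapted to the heat-kernel-based distance $\tilde\dist_{\mathrm{Spec}}$. Assume $\liminf_i \mathrm{diam}(X_i, \dist_i) = 0$ and pass to a subsequence with $\mathrm{diam}(X_i) \to 0$. The goal is to derive that $\mathbb{X}$ is a single point, contradicting the hypothesis.

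By the definition of $\tilde\dist_{\mathrm{Spec}}$-convergence, for each $i$ there are maps $g_i : X \to X_i$ (and $f_i : X_i \to X$) satisfying
\begin{equation}
e^{-(t+1/t)} \left| p_{X_i}(g_i y, g_i \tilde y, t) - p_X(y, \tilde y, t) \right| < \epsilon_i, \quad \forall y, \tilde y \in X,\ \forall t \in (0, \infty),
\end{equation}
with $\epsilon_i \to 0$. Specializing to $t = 1$ and subtracting the estimate for the pair $(y, y)$ from the one for $(y, \tilde y)$ gives
\begin{equation}
\left| p_X(y, y, 1) - p_X(y, \tilde y, 1) \right| \le \left| p_{X_i}(g_i y, g_i y, 1) - p_{X_i}(g_i y, g_i \tilde y, 1) \right| + 2 e^2 \epsilon_i.
\end{equation}
The gradient bound in Theorem \ref{thm:gaussian}, combined with $\mathrm{diam}(X_i) \to 0$ (so eventually $B_1(g_i y) = X_i$), controls the first term on the right by $C\, \mathrm{diam}(X_i) / \meas_{X_i}(X_i)$, where $C = C(K, N)$.

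The main obstacle is to ensure $\meas_{X_i}(X_i)$ is bounded below uniformly in $i$; I expect to handle this by a bootstrap. The hypothesis itself forces $p_{X_i}(g_i y, g_i y, 1)$ to be close to the finite number $p_X(y, y, 1)$, and for $\mathbb{X}_i$ of vanishing diameter the heat-kernel Gaussian lower bound (applied at $t=1 \gg \mathrm{diam}(X_i)^2$) implies $p_{X_i}(g_i y, g_i y, 1) \ge c(K, N) / \meas_{X_i}(X_i)$; together these yield $\meas_{X_i}(X_i) \gtrsim 1/p_X(y, y, 1)$. With this lower bound, both terms on the right-hand side above vanish, so $p_X(y, \tilde y, 1) = p_X(y, y, 1)$ for every $y, \tilde y \in X$. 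Integrating in $\tilde y$ against $\meas_X$ and using $\int_X p_X(y, \tilde y, 1)\,\di \meas_X(\tilde y) = 1$ forces $p_X(\cdot, \cdot, 1) \equiv 1 / \meas_X(X)$.

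Consequently the heat operator $h_1$ maps every $L^2$-function to a constant. Expanding via the spectral decomposition $h_1 f = \sum_{i \ge 0} e^{-\lambda_i(\mathbb{X})}\langle f, \phi^X_i\rangle \phi^X_i$ and using that $e^{-\lambda_i(\mathbb{X})} > 0$, the vanishing of all non-constant modes forces $L^2(X, \meas_X)$ to be one-dimensional; since $\mathrm{supp}\,\meas_X = X$, the space $X$ must be a single point, the desired contradiction. As an alternative route avoiding the bootstrap, one could instead first normalize the measures and pass to a further subsequence so that $\mathbb{X}_i \stackrel{\mathrm{mGH}}{\to} \tilde{\mathbb{X}}$ for some single-point space $\tilde{\mathbb{X}}$, prove a $\tilde\dist_{\mathrm{Spec}}$-analogue of Theorem \ref{asjaisahrah} (the uniform convergence of heat kernels from \cite{AmbrosioHondaTewodrose} handles $t$ in compact subsets of $(0, \infty)$, while the weight $e^{-(t+1/t)}$ together with the Gaussian estimates of Theorem \ref{thm:gaussian} handles the tails $t \to 0^+$ and $t \to \infty$), and finally conclude via the triangle inequality and the nondegeneracy established in Proposition \ref{prosss}.
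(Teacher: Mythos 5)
Your primary argument is correct, and it is a genuinely different route from the one the paper intends: the paper omits the proof with the remark that it goes ``by similar ways as above,'' i.e.\ via the compactness-and-contradiction scheme of Corollary \ref{aaassrr} and Theorem \ref{asoariiiis} (normalize, extract an mGH-convergent subsequence to a single point, show the spectral distance to that point vanishes, and invoke nondegeneracy) --- which is exactly the alternative route you sketch at the end. Your main argument instead works directly at $t=1$: the bootstrap correctly extracts a uniform lower bound on $\meas_{X_i}(X_i)$ from the Gaussian lower bound of Theorem \ref{thm:gaussian} together with the defining inequality of $\tilde\dist_{\mathrm{Spec}}$, the gradient bound (\ref{eq:equi lip}) plus the Sobolev-to-Lipschitz property then kills the oscillation of $p_{X_i}(\plchldr,\plchldr,1)$ as $\mathrm{diam}(X_i)\to 0$, and constancy of $p_X(\plchldr,\plchldr,1)$ forces $L^2(X,\meas_X)$ to be one-dimensional via (\ref{eq:expansion1}), hence $X$ a point. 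This buys you a self-contained proof that needs neither the mGH precompactness of $\mathcal{M}(K,N,d,v)$ nor the stability of heat kernels under mGH convergence from \cite{AmbrosioHondaTewodrose}. Two minor remarks on the alternative route only: Proposition \ref{prosss} is stated on $\mathcal{M}(K,N,d,v)$, which excludes single points, so you would need the (easy) single-point extension of the nondegeneracy --- which your direct computation in fact supplies; and normalizing the measures rescales the heat kernels, so unlike the $I^{\mathbf{a}}_t$-setting of Remark \ref{scalei} this step is not free for $\tilde\dist_{\mathrm{Spec}}$ and should be justified (your bootstrap already yields the required two-sided bounds on $\meas_{X_i}(X_i)$). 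Neither point affects your main argument.
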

\begin{theorem} Theorem \ref{kk} is still satisfied if $\mathbb{X}$ is a single point. 
\end{theorem}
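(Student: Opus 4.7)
The plan is to establish both implications in Theorem \ref{kk} under the extra assumption that $\mathbb{X}$ is a single point $\{x\}$ with reference measure $c\delta_x$, leveraging the already-proved form of Theorem \ref{kk} for non-single-point limits, Theorem \ref{thm:gaussian} together with the eigenfunction bounds (\ref{eq:eigenfunction}), and the uniform heat-kernel convergence from \cite{AmbrosioHondaTewodrose}. For the direction $\mathrm{mGH} \Rightarrow \tilde{\dist}_{\mathrm{Spec}}$, assume $\mathbb{X}_i \xrightarrow{\mathrm{mGH}} \mathbb{X}$, so $\mathrm{diam}(X_i) \to 0$ and $\meas_{X_i}(X_i) \to c$. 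Any admissible $f_i : X_i \to X$ is forced to be the constant map and we may take $g_i : X \to X_i$ to be any point, so both conditions in the definition of $\tilde{\dist}_{\mathrm{Spec}}$ collapse to the single requirement $\sup_{t > 0}\sup_{x,\tilde x \in X_i} e^{-(t+1/t)}|p_{X_i}(x,\tilde x,t) - 1/c| \to 0$. I would split $t \in (0,\infty)$ into three regimes. On a compact interval $[\tau, T] \subset (0,\infty)$, the locally uniform heat-kernel convergence from \cite[Theorem 3.3]{AmbrosioHondaTewodrose} yields $p_{X_i} \to 1/c$ uniformly, since every pair of sequences in $X_i$ automatically converges to the unique point in $X$. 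For $t \le \tau$, the Gaussian upper bound combined with Bishop-Gromov and the diameter upper bound gives $p_{X_i}(x,\tilde x,t) \le C(K,N,d)\,t^{-N/2}$, which the weight $e^{-1/t}$ crushes uniformly in $i$ as $\tau \to 0^+$. For $t \ge T$, the expansion (\ref{eq:expansion1}) combined with (\ref{eq:eigenfunction}) yields $|p_{X_i}(x,\tilde x,t) - 1/\meas_{X_i}(X_i)| \le \tilde C^2 \sum_{j \ge 1} e^{-\lambda_j(\mathbb{X}_i)t}\lambda_j(\mathbb{X}_i)^{N/2}$; using $\lambda_j(\mathbb{X}_i) \ge \tilde C^{-1} j^{2/N}$ this is controlled by $e^{-\lambda_1(\mathbb{X}_i) t/2}$ times a quantity uniformly bounded in $i$, and since the diameter collapse forces $\lambda_1(\mathbb{X}_i) \to \infty$ (as already used in the proof of Theorem \ref{asjaisahrah}), this vanishes as $T \to \infty$ with the weight $e^{-t}$ providing extra damping.

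For the converse $\tilde{\dist}_{\mathrm{Spec}} \Rightarrow \mathrm{mGH}$, the hypothesis provides $\epsilon_i \to 0$ with $e^{-(t+1/t)}|p_{X_i}(x,\tilde x,t) - 1/c| < \epsilon_i$ on $X_i \times X_i \times (0,\infty)$. Evaluating at $t = 1$ gives $|p_{X_i}(x,\tilde x,1) - 1/c| < e^2 \epsilon_i$ uniformly, and integrating in $\tilde x$ using $\int_{X_i} p_{X_i}(x,y,1)\,d\meas_{X_i}(y) = 1$ immediately forces $\meas_{X_i}(X_i) \to c$. To deduce $\mathrm{diam}(X_i) \to 0$, I would argue by contradiction: otherwise a subsequence has $\mathrm{diam}(X_i) \ge \delta > 0$, and together with the just-established measure bound the sequence lies in some $\mathcal{M}(K,N,d,v)$, so mGH-precompactness extracts a further subsequential limit $\tilde{\mathbb{X}}$ with $\mathrm{diam}(\tilde X) \ge \delta$, which is therefore not a single point. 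The already-proved Theorem \ref{kk} then gives $\mathbb{X}_i \xrightarrow{\tilde{\dist}_{\mathrm{Spec}}} \tilde{\mathbb{X}}$, and the triangle inequality forces $\tilde{\dist}_{\mathrm{Spec}}(\mathbb{X}, \tilde{\mathbb{X}}) = 0$. Unpacking the definition with the necessarily constant map $\tilde X \to X$ yields $p_{\tilde X}(y,\tilde y,t) \equiv 1/c$ on $\tilde X \times \tilde X \times (0,\infty)$, and testing this identity against any eigenfunction $\phi_j^{\tilde X}$ with $j \ge 1$ via $e^{-\lambda_j(\tilde{\mathbb{X}})t}\phi_j^{\tilde X}(\tilde y) = \int_{\tilde X} p_{\tilde X}(y,\tilde y,t)\phi_j^{\tilde X}(y)\,d\meas_{\tilde X}(y) = (1/c)\int_{\tilde X}\phi_j^{\tilde X}\,d\meas_{\tilde X} = 0$ forces $\phi_j^{\tilde X} \equiv 0$, contradicting $\|\phi_j^{\tilde X}\|_{L^2} = 1$. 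Hence $\tilde{\mathbb{X}}$ has no positive eigenvalues and must itself be a single point, contradicting $\mathrm{diam}(\tilde X) \ge \delta$.

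The main obstacle I anticipate is achieving \textbf{uniformity in $t$ over the entire half-line $(0,\infty)$} in the first direction, since the heat-kernel convergence cited from \cite{AmbrosioHondaTewodrose} is only locally uniform in $t$; the endpoints $t \to 0^+$ and $t \to \infty$ must be closed by independent quantitative arguments (the Gaussian bound on one side and the spectral gap $\lambda_1(\mathbb{X}_i) \to \infty$ on the other). A secondary technical point is the nondegeneracy step in the second direction, which cannot be quoted from Proposition \ref{prosss} since that proposition is confined to $\mathcal{M}(K,N,d,v)$ and implicitly excludes single-point spaces; one must instead deduce directly from the identity $p_{\tilde X} \equiv 1/c$ that $\tilde X$ is a single point, which the eigenfunction test above accomplishes.
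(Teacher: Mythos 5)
Your proposal is correct and follows essentially the route the paper intends when it writes ``by similar ways as above'': the forward direction is the heat-kernel analogue of Theorem \ref{asjaisahrah} (spectral expansion (\ref{eq:expansion1}), the bounds (\ref{eq:eigenfunction}), and $\lambda_1(\mathbb{X}_i)\to\infty$), and the converse mirrors the contradiction-plus-precompactness scheme of Corollary \ref{aaassrr} and Theorem \ref{asoariiiis}. One small remark: your appeal to \cite[Theorem 3.3]{AmbrosioHondaTewodrose} on the middle regime $[\tau,T]$ is both slightly delicate (that convergence result is stated for limits in the standard class, whereas the heat kernel of a one-point space is only a convention introduced in this paper) and unnecessary, since your large-$t$ estimate $\sup_{x,\tilde x}|p_{X_i}(x,\tilde x,t)-\meas_{X_i}(X_i)^{-1}|\le C(\tau,K,N,d)\,e^{-\lambda_1(\mathbb{X}_i)\tau/2}$ already holds uniformly for all $t\ge\tau$; dropping the middle regime makes the argument cleaner and self-contained.
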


\end{document}